\documentclass[a4paper,11pt]{article}
\usepackage[latin1]{inputenc}
\usepackage[hmargin={25mm,25mm},vmargin={25mm,30mm}]{geometry}
\usepackage{amsmath,amsfonts,amsthm,amssymb,newtxtext,newtxmath}
\usepackage{authblk}
\usepackage{cancel}
\usepackage{comment}
\usepackage{graphicx}
\usepackage[ocgcolorlinks,allcolors={blue}]{hyperref}
\usepackage{xcolor}
\usepackage{pgfplots}
\DeclareMathAlphabet{\mathpgoth}{OT1}{pgoth}{m}{n}
\usepackage{multirow}
\usepackage{booktabs}
\usepackage[compact]{titlesec}
\usepackage{enumitem}
\usepackage{pgfplotstable}
\usepackage{tikz}
\usetikzlibrary{spy}

\usepackage[font={footnotesize}]{subcaption}
\usepackage[absolute,showboxes,overlay]{textpos} 
\textblockorigin{0pt}{0pt}   
\definecolor{darkbrown}{HTML}{996633}
\newcommand{\logLogSlopeTriangle}[5]
{

    \pgfplotsextra
    {
        \pgfkeysgetvalue{/pgfplots/xmin}{\xmin}
        \pgfkeysgetvalue{/pgfplots/xmax}{\xmax}
        \pgfkeysgetvalue{/pgfplots/ymin}{\ymin}
        \pgfkeysgetvalue{/pgfplots/ymax}{\ymax}

        \pgfmathsetmacro{\xArel}{#1}
        \pgfmathsetmacro{\yArel}{#3}
        \pgfmathsetmacro{\xBrel}{#1-#2}
        \pgfmathsetmacro{\yBrel}{\yArel}
        \pgfmathsetmacro{\xCrel}{\xArel}

        \pgfmathsetmacro{\lnxB}{\xmin*(1-(#1-#2))+\xmax*(#1-#2)} 
        \pgfmathsetmacro{\lnxA}{\xmin*(1-#1)+\xmax*#1} 
        \pgfmathsetmacro{\lnyA}{\ymin*(1-#3)+\ymax*#3} 
        \pgfmathsetmacro{\lnyC}{\lnyA+#4*(\lnxA-\lnxB)}
        \pgfmathsetmacro{\yCrel}{\lnyC-\ymin)/(\ymax-\ymin)} 

        \coordinate (A) at (rel axis cs:\xArel,\yArel);
        \coordinate (B) at (rel axis cs:\xBrel,\yBrel);
        \coordinate (C) at (rel axis cs:\xCrel,\yCrel);

        \draw[black]   (A)-- node[pos=0.5,anchor=north] {\scriptsize{1}}
                    (B)-- 
                    (C)-- node[pos=0.,anchor=west] {\scriptsize{\color{#5}#4}} 
                    (A);
    }
}


\usepackage{stackengine}

\usepackage[style=numeric-comp,maxbibnames=99]{biblatex}

\bibliography{pnshho}

\allowdisplaybreaks


\newtheorem{theorem}{Theorem}

\newtheorem{lemma}[theorem]{Lemma}

\theoremstyle{remark}
\newtheorem{remark}[theorem]{Remark}
\theoremstyle{definition}
\newtheorem{assumption}{Assumption}

\newtheorem{example}[theorem]{Example}







\newcommand{\email}[1]{\href{mailto:#1}{#1}}




\def\b{\boldsymbol}
\newcommand*\cst[1]{\mathrm{#1}}

\newcommand{\R}{\mathbb{R}} 
\newcommand{\N}{\mathbb{N}} 
\newcommand{\Poly}{\mathbb{P}} 
\newcommand{\M}[1]{\R^{#1 \times #1}} 
\newcommand{\Ms}[1]{\R^{#1 \times #1}_\cst{s}} 
\newcommand\bdU[2]{{\bu U}_{#1}^{#2}} 
\newcommand\dP[2]{P_{#1}^{#2}} 

\def\und{\underline} 
\newcommand\bu[1]{\und{\b{#1}}} 
\newcommand{\T}{\mathcal{T}} 
\newcommand{\F}{\mathcal{F}} 
\newcommand{\Fb}{\F_h^\cst{b}} 
\newcommand{\Fi}{\F_h^\cst{i}} 

\newcommand{\res}[1]{\ \!\!_{|_{#1}}} 
\newcommand{\dgrad}[2]{\b{\cst{G}}^{#1}_{#2}} 
\newcommand{\dgrads}[2]{\b{\cst{G}}^{#1}_{\cst{s},#2}} 
\newcommand{\bdrec}[2]{\b{\cst{r}}^{#1}_{#2}} 
\newcommand{\bdfbres}[2]{\b{\Delta}^{#1}_{#2}} 
\newcommand{\ddiv}[2]{\cst{D}^{#1}_{#2}} 
\newcommand{\GRAD}{\b\nabla} 
\newcommand{\brkGRAD}{\b\nabla_h} 
\newcommand{\GRADs}{\b{\nabla}_\cst{s}} 
\newcommand{\brkGRADs}{\b{\nabla}_{\cst{s},h}} 
\renewcommand{\div}{\b\nabla{\cdot}} 
\newcommand{\DIV}{\b\nabla{\cdot}} 
\newcommand{\brkDIV}{\b\nabla_h \cdot} 
\newcommand\bI[2]{\bu{I}_{#1}^{#2}} 
\newcommand\proj[2]{\pi_{#1}^{#2}} 
\newcommand\PROJ[2]{\b{\pi}_{#1}^{#2}} 
\newcommand\stress{\b\sigma} 
\newcommand\convec{\b\chi}


\newcommand*\CV[2]{\underset{#1\hspace{0.05cm} \to\hspace{0.05cm} #2}{\,\smash{\mathop{
\xrightarrow{\hspace*{0.8cm}}}}\ }}

\newcommand\sob{r}
\newcommand\sobs{\tilde r}
\newcommand\conv{s}
\newcommand\convs{\tilde s}

\newcommand{\strain}{\boldsymbol{\varepsilon}}


\newcommand*\NAV[2]{(#1 \cdot \GRAD)#2}
\newcommand*\tri[4]{(#1 \cdot #2)#3 \cdot #4}
\newcommand*\btri[4]{\big(#1 \cdot #2\big)#3 \cdot #4}



\setlength\extrarowheight{2pt}

%

\title{A Hybrid High-Order method for incompressible flows of non-Newtonian fluids with power-like convective behaviour}	
\author[1]{Daniel Castanon Quiroz \footnote{\email{danielcq.mathematics@gmail.com}}}
\author[2]{Daniele A. Di Pietro \footnote{\email{daniele.di-pietro@umontpellier.fr}}}
\author[2]{Andr\'{e} Harnist \footnote{\email{andre.harnist@umontpellier.fr}, corresponding author}}
\affil[1]{LJAD, Univ C\^ote d'Azur, Inria, CNRS, Nice, France}
\affil[2]{IMAG, Univ Montpellier, CNRS, Montpellier, France}


\begin{document}

\maketitle

\begin{abstract}
  In this work, we design and analyze a Hybrid High-Order (HHO) discretization method for incompressible flows of non-Newtonian fluids with power-like convective behaviour.
  We work under general assumptions on the viscosity and convection laws, that are associated with possibly different Sobolev exponents $\sob\in(1,\infty)$ and $\conv\in(1,\infty)$.
  After providing a novel weak formulation of the continuous problem, we study its well-posedness highlighting how a subtle interplay between the exponents $\sob$ and $\conv$ determines the existence and uniqueness of a solution.
  We next design an HHO scheme based on this weak formulation and perform a comprehensive stability and convergence analysis, including convergence for general data and error estimates for shear-thinning fluids and small data.
  The HHO scheme is validated on a complete panel of model problems.
  \medskip\\
  \textbf{Keywords:} Hybrid High-Order methods, non-Newtonian fluids, Navier--Stokes, power-law, Carreau--Yasuda law, power-like convective behaviour, lid-driven cavity problem
  \smallskip\\
  \textbf{MSC2010 classification:} 75A05, 76D05, 65N30, 65N08, 65N12
\end{abstract}



\section{Introduction}\label{sec:introduction}

In this paper, building on the results of \cite{Botti.Castanon-Quiroz.ea:20}, we design and analyze a Hybrid High-Order (HHO) method for incompressible flows of non-Newtonian fluids governed by the generalized Navier--Stokes equations.
The formulation considered here encompasses general viscosity and convection laws, possibly corresponding to different Sobolev exponents.
  The proposed numerical method aims at overcoming certain limitations of traditional (e.g., finite element or finite volume) schemes, particularly concerning the supported meshes and approximation orders.
  Notice, however, that the techniques introduced in this work can, in principle, be applied also to more classical discretizations.
 
Nonlinear rheologies are encountered in several fields, including ice sheet dynamics and glacier modelling \cite{Isaac.Stadler.ea:15,ahlkrona2021}, mantle convection \cite{Schubert.Turcotte.ea:01}, chemical engineering \cite{Ko.Pustejovska.ea:18}, and biological fluids \cite{Lai.Kuei.ea:78,Galdi.Rannacher.ea:18}.
Their mathematical study was pioneered in the work of Ladyzhenskaya \cite{Ladyzhenskaya:69}; detailed well-posedness and regularity analyses of the corresponding system of equations have been carried out in \cite{Malek.Rajagopal:05, Ruzicka.Diening:07, Diening.Ettwein:08, Beirao-da-Veiga:09, Berselli.Ruzicka:20}.
Recently, a variation of the classical trilinear convective term has been considered in \cite{li2017peuler} in relation to the computation of the Wasserstein distance for optimal transport applications (to the best of our knowledge, such generalizations haven't yet been considered in the context of fluid-mechanics).
In this work, we propose a further generalization of this model encompassing power-like convective behaviours that satisfy non-dissipativity relations; see Assumption \ref{ass:conv} below.
  The power-law behaviours of the viscous and convective terms are characterized by (possibly different) Sobolev exponents $r\in(1,\infty)$ and $s\in(1,\infty)$.
  In Theorem \ref{thm:well-posedness} below, we carry out a well-posedness analysis for the continuous problem showing that a subtle interplay of these exponents determines the existence and uniqueness of a solution  (in the classical case $s=2$, relevant in fluid mechanics, this translates into constraints on the Sobolev index $r$).
  Such an interplay, which reverberates at the discrete level, is required to leverage the H\"older continuity of the convective function that appears in the weak formulation of the problem.
    Notice that considering a general exponent $s\in(1,\infty)$ involves only (relatively) minor changes in the analysis with respect to the case $s=2$, and has the advantage of making the proposed method suitable for applications in promising fields such as optimal transport.
Also, to the best of our knowledge, both the analysis of this generalized Navier--Stokes problem and its numerical approximation are entirely new.

While a large body of literature deals with the numerical approximation of the Navier--Stokes equations, only a relatively small fraction of these works addresses nonlinear rheologies.
Finite element methods for creeping (Stokes) flows of non-Newtonian fluids have been considered in \cite{Barrett.Liu:94,Hirn:13,Belenki.Berselli.ea:12}.
Non-Newtonian fluid flows with standard convective behaviour have been considered in \cite{crochet1983numerical,crochet2012numerical}.
This model is encountered in several other works: see, e.g., \cite{Diening.Kreuzer.ea:13,Kreuzer.Suli:16} for finite element methods with implicit power-law-like rheologies;
\cite{Ko.Pustejovska.ea:18,Ko.Suli:18} for generalized Newtonian fluids with space variable and concentration-dependent power-law index; 
\cite{kroner2014} for a local discontinuous Galerkin method;
\cite{doi:10.1080/01932691.2014.896221} for a study of non-Newtonian polymer flows through porous media;
\cite{janecka2019} for simulations of transient flows of non-Newtonian fluids;
\cite{azoug2020} for a finite element approximation of non-Newtonian polymer aqueous solutions with fractional time-derivative.

  Recent works have emphasized the importance of handling polytopal meshes in the context of numerical fluid mechanics; see, e.g., the introduction of \cite{Di-Pietro.Droniou:20} for a broad discussion on this subject.
  General meshes, possibly combined with high order, can be used in this context to:
  adapt the shape of the elements to the local features of the flow, thus improving the resolution of boundary or internal layers;
  perform non-conforming local mesh refinement, which naturally preserves the mesh quality;
  reduce the computational cost while preserving the approximation of the domain geometry by mesh coarsening \cite{Bassi.Botti.ea:12,Bassi.Botti.ea:14}.
  Among recent methods for incompressible flows that natively support general polytopal meshes and arbitrary-order, we can cite
  discontinuous Galerkin methods \cite{Di-Pietro.Ern:10} (see also \cite[Chapter 6]{Di-Pietro.Ern:12}), 
  Virtual Element methods  \cite{Beirao-da-Veiga.Lovadina.ea:18,Gatica.Munar.ea:18,Liu.Chen:19,Irisarri.Hauke:19} (see also \cite{Veiga.Brezzi:12}),
  and HHO methods \cite{Di-Pietro.Krell:18,Botti.Di-Pietro.ea:19*1,Castanon-Quiroz.Di-Pietro:20,Botti.Castanon-Quiroz.ea:20} (see also \cite[Chapter 9]{Di-Pietro.Droniou:20}).
    While discontinuous Galerkin methods are commonly considered the golden standard in fluid mechanics, recent works  have pointed out the potential of HHO methods in terms of overall efficiency and precision for real-life problems \cite{Botti.Di-Pietro:21}.
    The developments of the present work show their ability to tackle complex, highly nonlinear physics.%
  
Specifically, in this paper we extend the HHO method of \cite{Botti.Castanon-Quiroz.ea:20} to the full generalized Navier--Stokes equations with power-law viscous and convective behaviours.
The discretization of the convective term relies on the novel formulation devised at the continuous level, which guarantees non-dissipativity at the discrete level and is obtained replacing the continuous gradient by the classical HHO gradient reconstruction in full polynomial spaces; see \cite[Eq. (4.3)]{Di-Pietro.Droniou:17} for the scalar case.
For this novel discrete convective function we prove the key properties that intervene in the stability and consistency of the method.
The former consist, in addition to non-dissipativity, in a H\"older continuity property expressed in terms of a discrete $W^{1,\sob}$-(semi)norm.
The latter include consistency for smooth functions and sequential consistency.
We perform complete stability and convergence analyses, highlighting the interplay between the exponents $\sob$ and $\conv$.
Specifically, existence and uniqueness of a discrete solution, established in Theorem \ref{thm:discrete.well-posedness}, hold under the same conditions on $\sob$ and $\conv$ as for the continuous problem and a data smallness assumption where the constants of relevant continuous inequalities are replaced by their discrete counterparts.
We then establish, in Theorem \ref{thm:convergence}, various convergence results under minimal regularity assumptions on the solutions using compactness arguments.
Finally, an error estimate for shear-thinning fluids displaying different orders of convergence according to the degeneracy of the problem in the spirit of \cite{Di-Pietro.Droniou.ea:21} is proved in Theorem \ref{thm:error.estimate}.
When polynomials of degree $k\ge 1$ are used, denoting by $h$ the meshsize, the error estimates give orders of convergence ranging from $h^{(k+1)(\sob-1)}$ to $h^{k+1}$ for the velocity and from $h^{(k+1)(\sob-1)^2}$ to $h^{(k+1)(\sob-1)}$ for the pressure, depending on the degeneracy of the problem.

The rest of the paper is organized as follows.
In Section \ref{sec:continuous.setting} we introduce the strong and weak formulations of the generalized Navier--Stokes problem, discuss the assumptions on the viscosity and convection laws, and study existence and uniqueness of a weak solution.
The construction of the HHO discretization is carried out in Section \ref{sec:discrete.setting} by defining the discrete counterparts of the viscous, convective, and coupling terms.
In Section \ref{sec:discrete.main.results}, we formulate the discrete problem and state the main stability and convergence results for the method.
In Section \ref{sec:num.res}, we investigate the performance of the method on a complete set of model problems.
Finally, Section \ref{sec:main.results.properties} collects the proofs of the consistency properties of the discrete convective function and of the main results.


\section{Continuous setting}\label{sec:continuous.setting}

Let $\Omega \subset \R^d$, $d\in\{2,3\}$, denote a bounded, connected, polyhedral open set with Lipschitz boundary $\partial\Omega$. We consider the incompressible flow of a fluid occupying $\Omega$ and subjected to a volumetric force field $\b f : \Omega \to \R^d$, governed by the following generalized Navier--Stokes problem: Find the velocity field $\b u : \Omega \to \R^d$ and the pressure field $p : \Omega \to \R$ such that
\begin{subequations}\label{eq:ns.continuous}
  \begin{alignat}{2} 
    -\DIV\stress(\cdot,\GRADs \b u) + \NAV{\b u}{\convec(\cdot,\b u)} + \GRAD p &= \b f &\qquad& \mbox{  in  } \Omega, \label{eq:ns.continuous:momentum} \\
    \div \b u &= 0 &\qquad& \mbox{ in }  \Omega, \label{eq:ns.continuous:mass} \\
    \b u &= \b 0 &\qquad& \mbox{ on } \partial \Omega, \label{eq:ns.continuous:bc} \\
    \int_\Omega p &= 0, \label{eq:ns.continuous:closure}
  \end{alignat}
\end{subequations} 
where $\DIV$ denotes the divergence operator applied to tensor-valued or vector-valued fields, $\GRADs$ is the symmetric part of the gradient operator $\GRAD$ applied to vector fields, and, denoting by $\Ms{d}$ the set of square, symmetric, real-valued $d\times d$ matrices, $\stress : \Omega \times \Ms{d} \to \Ms{d}$ is the viscosity law and $\convec : \Omega \times \R^d \to \R^d$ is the convection law.
In what follows, we formulate assumptions on $\stress$ and $\convec$ that encompass common models for non-Newtonian fluids and state a weak formulation for problem \eqref{eq:ns.continuous} that will be used as a starting point for its discretization.

\subsection{Viscosity law}\label{sec:strain.stress.law}

For all $m \in [1,\infty]$, we define the \emph{conjugate}, \emph{singular}, and \emph{Sobolev} exponents of $m$ by
\begin{equation}\label{eq:exps}
\!\!   m' \coloneqq \begin{cases} \frac{m}{m-1} &\! \text{if } m \in (1,\infty) \\ \infty &\! \text{if } m = 1 \\ 1 &\! \text{if } m = \infty \end{cases} \in [1,\infty],\quad
    \tilde m \coloneq \min(m,2) \in [1,2],\quad
    m^* \coloneqq \begin{cases} \frac{dm}{d-m} &\! \text{if } m < d \\ \infty &\! \text{if } m \geq d \end{cases} \in [m,\infty].
\end{equation}
For all $\b\tau= (\tau_{ij})_{1 \le i,j \le d}$ and $\b\eta= (\eta_{ij})_{1 \le i,j \le d}$ in $\M{d}$, we also define the Frobenius inner product $\b\tau : \b\eta \coloneqq \sum_{i,j=1}^d \tau_{ij}\eta_{ij}$ and the corresponding norm $|\b\tau|_{d \times d}\coloneqq \sqrt{\b\tau : \b\tau}$.

\begin{assumption}[Viscosity law]\label{ass:stress}
  Let a real number $\sob \in (1,\infty)$ be fixed. The viscosity law satisfies
  \begin{gather}
    \stress : \Omega \times \Ms{d} \to \Ms{d} \text{ is measurable},\label{eq:ass:stress:meas} \\
    \stress(\b x,\b 0) \in L^{\sob'}(\Omega,\Ms{d}) \text{ for almost every } \b x \in \Omega.\label{eq:ass:stress:0}
  \end{gather}
  Moreover, there exist real numbers $\delta \in [0,\infty)$ and $\sigma_\cst{hc},\sigma_\cst{hm} \in (0,\infty)$ such that, for all $\b\tau,\b\eta \in \Ms{d}$ and almost every $\b x \in \Omega$, the following H\"older continuity and H\"older monotonicity properties hold
    \begin{align} 
      \left|
      \stress(\b x,\b\tau)-\stress(\b x,\b\eta)
      \right|_{d\times d} &\le \sigma_\cst{hc} \left(\delta^\sob+|\b\tau|_{d\times d}^\sob+|\b\eta|_{d\times d}^\sob\right)^\frac{\sob-2}{\sob}| \b\tau-\b\eta |_{d\times d},\label{eq:ass:stress:holder.cont} \\
      \left(\stress(\b x,\b\tau)-\stress(\b x,\b\eta)\right):(\b\tau-\b\eta)  &\ge \sigma_\cst{hm}\left(\delta^\sob+|\b\tau|_{d\times d}^\sob+|\b\eta|_{d\times d}^\sob\right)^\frac{\sob-2}{\sob}|\b\tau-\b\eta|_{d\times d}^{2}.\label{eq:ass:stress:strong.mono}
    \end{align}  
\end{assumption}

\begin{remark}[Degeneracy function]\label{rem:stress:residual}
  The parameter $\delta$ in Assumption \ref{ass:stress} is related to the degeneracy of the flux function when $r<2$.
    Another possible choice, considered in \cite{Di-Pietro.Droniou.ea:21}, consists in taking $\delta \in  L^r(\Omega,[0,\infty))$.
    This variation requires only minor changes in the analysis, not detailed here for the sake of conciseness.
\end{remark}

\begin{example}[Carreau--Yasuda stress]\label{ex:Carreau--Yasuda}
  An example of viscosity law  satisfying Assumption \ref{ass:stress} is the $(\mu,\delta,a,\sob)$-Carreau--Yasuda law such that, for almost every $\b x\in\Omega$ and all $\b\tau \in \Ms{d}$,
  \begin{equation}\label{eq:Carreau--Yasuda}
    \stress(\b x,\b\tau) = \mu(\b x)\left(\delta^{a(\b x)}+|\b\tau|_{d \times d}^{a(\b x)}\right)^\frac{\sob-2}{a(\b x)}\b\tau,
  \end{equation}
  where $\mu : \Omega \to [\mu_-,\mu_+]$ and $a : \Omega \to [a_-,a_+]$ are measurable functions with $\mu_\pm,a_\pm \in (0,\infty)$, $\delta \in [0,\infty)$, and $\sob \in (1,\infty)$. Notice that the case $\delta = 0$ corresponds to classical power-law fluids. See \cite[Example 4]{Botti.Castanon-Quiroz.ea:20} for a proof of the fact that this law matches Assumption \ref{ass:stress} and also \cite[Example 6]{Di-Pietro.Droniou.ea:21} for the generalization of $\delta$ to a function.
\end{example}

   \begin{remark}[Traceless-stable assumption]
     Although we consider incompressible flows (which are characterized by traceless strain rate fields), our analysis does not require that $\text{tr}\,\stress(\b x,\b\tau) = \b 0$ for almost every $\b x\in\Omega$ and all $\b\tau \in \Ms{d}$ such that $\text{tr}\,\b\tau = 0$.
     In practice, however, this property holds for generalized Newtonian fluids, for which there exists a scalar function $\nu : \Omega \times \Ms{d} \to \R$ such that $\stress(\b x,\b\tau) = \nu(\b x,\b\tau)\b\tau$  for almost every $\b x\in\Omega$ and all $\b\tau \in \Ms{d}$; cf. Example \ref{ex:Carreau--Yasuda}.
\end{remark}

\subsection{Convection law}\label{sec:convective.law}

In what follows, $|{\cdot}|$ will denote both the absolute value of scalars and Euclidian norm of vectors, while $\otimes$ denotes the tensor product of two vectors such that, for all $\b x=(x_i)_{1\le i\le d}\in\R^d$ and $\b y=(y_j)_{1\le j\le d}\in\R^d$, $\b x\otimes\b y \coloneq (x_iy_j)_{1\le i,j\le d}\in\R^{d\times d}$.
\begin{assumption}[Convection law]\label{ass:conv}
  Let a real number $\conv \in (1,\infty)$ be fixed. The convection law satisfies
  \begin{subequations}\label{eq:ass:conv}
    \begin{gather}
      \convec : \Omega \times \R^d \to \R^d \text{ is measurable},\label{eq:ass:conv:meas} \\
      \convec(\b x,\b 0) = \b 0 \text{ for almost every } \b x \in \Omega.\label{eq:ass:conv:0}
    \end{gather}
    We also assume that, for all $\b w \in \R^d$, the non-dissipativity relations hold:
    \begin{align}
      \NAV{\b w}{\convec(\cdot,\b w)} &= \NAV{\convec(\cdot,\b w)}{\b w}+(s-2)\frac{\tri{\convec(\cdot,\b w)}{\GRAD}{\b w}{\b w}}{|\b w|^2}\b w,\label{eq:ass:conv:non.dissip:1}\\
      \b w \otimes \convec(\cdot,\b w) &= \convec(\cdot,\b w) \otimes \b w.\label{eq:ass:conv:non.dissip:2}
    \end{align}
    Moreover, there exists a real number  $\chi_\cst{hc} \in (0,\infty)$ such that, for all $\b v,\b w \in \R^d$ and almost every $\b x \in \Omega$, the following H\"older continuity property holds:
    \begin{equation}\label{eq:ass:conv:holder.cont}
      |\convec(\b x,\b w)-\convec(\b x,\b v)| \le \chi_\cst{hc}\left(|\b w|^\conv+|\b v|^\conv\right)^\frac{\conv-\convs}{\conv}|\b w - \b v|^{\convs-1}.
    \end{equation}
  \end{subequations}
\end{assumption}

\begin{example}[Standard convection law]
  The standard convection law is obtained taking $s=2$ and $\convec(\cdot,\b w) \equiv \b w$.
    According to Theorem \ref{thm:convergence} below, with this choice we can prove convergence provided that $r\in\big(\frac32,\infty\big)$ if $d=2$ and $r\in\big(\frac95,\infty\big)$ if $d=3$.
    Error estimates stated by Theorem \ref{thm:error.estimate}, on the other hand, also require $r\le 2$, reducing the above intervals to $r\in\big[\frac32,2\big]$ if $d=2$ and $r\in\big[\frac95,2\big]$ if $d=3$; note that the additional regularity assumption allows the left limit points.
\end{example}

\begin{example}[$(\nu,s)$-Laplace convection law] \label{ex:Laplace}
  Another example of convection law satisfying Assumption \ref{ass:conv} is the $(\nu,s)$-Laplace law considered in \cite{li2017peuler} in the context of applications to optimal transport and such that, for almost every $\b x\in\Omega$ and all $\b w \in \R^d$, 
  \begin{equation}\label{eq:Laplace}
    \convec(\b x,\b w) = \nu(\b x)|\b w|^{s-2}\b w,
  \end{equation}
  where $\nu : \Omega \to [0,\nu_+]$ is a measurable function with $\nu_+ \in [0,\infty)$ corresponding to the local flow convection index, while $\conv \in (1,\infty)$ is the convection behaviour index.  
    It can be proved as in \cite[Example 4]{Botti.Castanon-Quiroz.ea:20} that $\convec$ is an $s$-power-framed function, which implies \eqref{eq:ass:conv:holder.cont}.
\end{example}

\subsection{Weak formulation}\label{sec:weak.formulation}

The starting point for the HHO discretization of problem \eqref{eq:ns.continuous} is the weak formulation studied in this section.
We define the following velocity and pressure spaces embedding, respectively, the homogeneous boundary condition for the velocity and the zero-average constraint for the pressure: 
\[
\b U \coloneqq \big\{\b v \in W^{1,\sob}(\Omega)^d\ : \ \b v\res{\partial\Omega} = \b 0 \big\},
\qquad
P \coloneqq  \big\{q \in L^{\sob'}(\Omega)\ : \ \textstyle\int_\Omega q = 0 \big\}. 
\]
Assuming $\b f \in L^{\sob'}(\Omega)^d$, the weak formulation of problem \eqref{eq:ns.continuous} reads:
Find $(\b u,p) \in \b U \times P$ such that
\begin{subequations}\label{eq:ns.weak}
  \begin{alignat}{2}
     a(\b u,\b v)+c(\b u,\b v)+b(\b v,p) &= \displaystyle\int_\Omega \b f \cdot \b v &\qquad \forall \b v \in \b U,\label{eq:ns.weak:momentum} \\
     -b(\b u,q) &= 0 &\qquad \forall q \in P,  \label{eq:ns.weak:mass} 
  \end{alignat}
\end{subequations}
where the function $a : \b U \times \b U \to \R$,
the bilinear form $b : \b U \times L^{\sob'}(\Omega) \to \R$,
and the function $c : \b U \times \b U \to \R$ are defined such that, for all $\b v,\b w \in \b U$ and all $q \in L^{\sob'}(\Omega)$,
\begin{gather}\label{eq:weak:ab}
  a(\b w,\b v) \coloneq
  \int_\Omega \stress(\cdot,\GRADs \b w) : \GRADs \b v, \qquad
  b(\b v,q)  \coloneq -\int_\Omega (\GRAD \cdot \b v) q,
  \\ \label{eq:weak:c}
  c(\b w,\b v) \coloneq
  \frac{1}{\conv}\int_\Omega \tri{\convec(\cdot,\b w)}{\GRAD}{\b w}{\b v}
  + \frac{\conv-2}{\conv}\int_\Omega \frac{\b v \cdot \b w}{|\b w|^{2}}\tri{\convec(\cdot,\b w) }{\GRAD}{\b w}{\b w}- \frac{1}{\conv'}\int_\Omega \tri{\convec(\cdot,\b w) }{\GRAD}{\b v}{\b w}.
\end{gather}
In order to obtain the function $c$ in \eqref{eq:weak:c}, we have used \eqref{eq:ass:conv:non.dissip:1}--\eqref{eq:ass:conv:non.dissip:2} as follows:
Denoting by $\b n_{\partial\Omega}$ the unit normal vector pointing out of $\partial\Omega$ and observing that $1 = \frac{1}{s}+\frac{1}{s'}$, for smooth enough functions $\b v,\b w:\Omega\to\R^d$ such that $\DIV \b w = 0$ and $\b w\res{\partial\Omega}= \b 0$ we can write
\begin{equation}\label{eq:c:weak.formulation}
\begin{aligned}
  &\int_\Omega \tri{\b w}{\GRAD}{\convec(\cdot,\b w)}{\b v}
  \\
  &\qquad
  = \frac{1}{s}\int_\Omega \tri{\b w}{\GRAD}{\convec(\cdot,\b w)}{\b v}+\frac{1}{s'}\int_\Omega \tri{\b w}{\GRAD}{\convec(\cdot,\b w)}{\b v}
  \\
  &\qquad
  = \frac{1}{\conv}\left(\int_\Omega \tri{\convec(\cdot,\b w)}{\GRAD}{\b w}{\b v}+(\conv-2)\int_\Omega \frac{\b v \cdot \b w}{|\b w|^{2}}\tri{\convec(\cdot,\b w) }{\GRAD}{\b w}{\b w}\right)\\
  &\qquad
  \quad  -\frac{1}{s'}\left(\int_\Omega \tri{\b w}{\GRAD}{\b v}{\convec(\cdot,\b w)}+\cancel{\int_\Omega (\convec(\cdot,\b w) \cdot \b v)(\DIV \b w)}-\cancel{\int_{\partial\Omega} (\convec(\cdot,\b w) \cdot \b v)(\b w \cdot \b n_{\partial\Omega})}\right),
\end{aligned}
\end{equation}
where the second equality follows from an integration by parts along with \eqref{eq:ass:conv:non.dissip:1},
while the cancellations are a consequence of the assumptions on $\b w$.
Using \eqref{eq:ass:conv:non.dissip:2} on the last non-zero term of \eqref{eq:c:weak.formulation}, we finally get the expression in the right-hand side of \eqref{eq:weak:c}. 
This version of $c$ satisfies, by construction, the following non-dissipativity property:
For all $\b w \in \b U$,
\begin{equation}\label{eq:c:zero}
c(\b w,\b w) = 0.
\end{equation}

We now recall the following Korn inequality (see \cite[Theorem 1]{Geymonat.Suquet:86}): For all $m \in (1,\infty)$, there is $C_{\cst{K},m} \in (0,\infty)$ only depending on $m$, $d$, and $\Omega$ such that, for all $\b v \in\b U$,
\begin{equation}\label{eq:Korn}
\|\b v\|_{W^{1,m}(\Omega)^d} \le C_{\cst{K},m} \|\GRADs \b v\|_{L^m(\Omega)^{d\times d}}.
\end{equation}

\begin{theorem}[Existence and uniqueness for problem \eqref{eq:ns.weak}]\label{thm:well-posedness}
  Under Assumptions \ref{ass:stress} and \ref{ass:conv}, there exists a solution $(\b u,p) \in \b U \times P$ to the weak formulation \eqref{eq:ns.weak}, and any solution satisfies
  \begin{equation}
    \begin{aligned}
      |\b u|_{W^{1,r}(\Omega)^d} &\le C_\cst{v}\left[
        \left(\sigma_\cst{hm}^{-1}\| \b f \|_{L^{\sob'}(\Omega)^d}\right)^{r'}\!\!+\min\left(\delta^r;\left(\delta^{2-\sobs}\sigma_\cst{hm}^{-1}\| \b f \|_{L^{\sob'}(\Omega)^d}\right)^\frac{r}{\sob+1-\sobs}\right)
        \right]^\frac{1}{r}
    \end{aligned} \label{eq:well-posedness:bound:u}
  \end{equation}
  with $C_\cst{v} > 0$ depending only on $\Omega$, $d$, and $r$.
  Moreover, assuming $2 \le s \le \frac{\sobs^*}{\sobs'}$, i.e.,
  \begin{equation}\label{eq:intervals.s:cont.uniqueness}
    s\in\begin{cases}
      \big[2,\frac{d(r-1)}{d-r}\big] & \text{if $d=2$ and $r\in\big[\frac32,2\big)$ or $d=3$ and $r\in\big[\frac95,2\big]$,}
        \\
        [2,3] & \text{if $d=3$ and $r\in(2,3)$},
        \\
        [2,\infty) & \text{if $d=2$ and $r\in[2,\infty)$ or $d=3$ and $r\in[3,\infty)$,}
    \end{cases}
  \end{equation}
  and that the following data smallness condition holds:
  \begin{equation}\label{eq:well-posedness:f}
    \delta^r+\left(\sigma_\cst{hm}^{-1}\| \b f \|_{L^{\sob'}(\Omega)^d}\right)^{r'} < \left(1+2C_\cst{v}^r\right)^{-1}\left(C_{\cst{c},\sobs}^{-1}\chi_\cst{hc}^{-1}C_\cst{a}\sigma_\cst{hm}\delta^{r-\sobs}\right)^{\frac{r}{s+1-\sobs}},
  \end{equation}
  the solution of \eqref{eq:ns.weak} is unique.
\end{theorem}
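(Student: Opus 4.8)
The plan is to follow the classical route for monotone-type problems perturbed by a non-dissipative convective term: first reduce \eqref{eq:ns.weak} to the divergence-free subspace and recover the pressure by De~Rham's theorem; then prove the a priori bound \eqref{eq:well-posedness:bound:u} by testing with the solution itself, crucially using $c(\b w, \b w) = 0$ (cf.\ \eqref{eq:c:zero}); then prove existence via a Galerkin scheme, Brouwer's fixed-point theorem, and a compactness/Minty monotonicity argument; and finally obtain uniqueness, under the supplementary hypotheses \eqref{eq:intervals.s:cont.uniqueness}--\eqref{eq:well-posedness:f}, by playing the H\"older monotonicity \eqref{eq:ass:stress:strong.mono} of $\stress$ against the H\"older continuity \eqref{eq:ass:conv:holder.cont} of $\convec$. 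Since $b$ is inf--sup stable on $\b U \times P$ --- equivalently, $\DIV : \b U \to P$ admits a bounded right inverse on the Lipschitz polyhedron $\Omega$ (a Bogovskii operator) --- solving \eqref{eq:ns.weak} is equivalent to finding $\b u$ in $\b U_0 := \{\b v \in \b U : \DIV \b v = 0\}$ with $a(\b u, \b v) + c(\b u, \b v) = \int_\Omega \b f \cdot \b v$ for all $\b v \in \b U_0$, the pressure being then reconstructed, uniquely for a given $\b u$, from \eqref{eq:ns.weak:momentum} tested against the right inverse of $\DIV$; consistently, \eqref{eq:well-posedness:bound:u} does not involve $p$.

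\emph{A priori bound.} Testing \eqref{eq:ns.weak:momentum} with $\b v = \b u$ and using $b(\b u, p) = 0$ (since $\DIV \b u = 0$) together with $c(\b u, \b u) = 0$ gives $a(\b u, \b u) = \int_\Omega \b f \cdot \b u$. By \eqref{eq:ass:stress:strong.mono} with $\b\eta = \b 0$ and \eqref{eq:ass:stress:0}, the left-hand side is bounded below by $\sigma_\cst{hm}\int_\Omega(\delta^\sob + |\GRADs \b u|_{d \times d}^\sob)^{\frac{\sob - 2}{\sob}}|\GRADs \b u|_{d \times d}^2$ minus a term linear in $\|\GRADs \b u\|_{L^\sob(\Omega)^{d \times d}}$, while the right-hand side is controlled by $\|\b f\|_{L^{\sob'}(\Omega)^d}\|\b u\|_{L^\sob(\Omega)^d}$ and hence, through the Korn inequality \eqref{eq:Korn}, by a multiple of $\|\b f\|_{L^{\sob'}(\Omega)^d}\|\GRADs \b u\|_{L^\sob(\Omega)^{d \times d}}$. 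An elementary H\"older/convexity lemma relating $\int_\Omega(\delta^\sob + |\GRADs \b u|_{d \times d}^\sob)^{\frac{\sob - 2}{\sob}}|\GRADs \b u|_{d \times d}^2$ to $\|\GRADs \b u\|_{L^\sob(\Omega)^{d \times d}}$ --- which, for $\sob < 2$, separates the regime $\|\GRADs \b u\|_{L^\sob(\Omega)^{d \times d}} \gtrsim \delta$ from its complement --- followed by a Young absorption, produces exactly the two competing terms inside the minimum in \eqref{eq:well-posedness:bound:u}. Since nothing here uses the existence of $\b u$, the bound holds for any solution.

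\emph{Existence.} Fix an increasing family $(\b U_{0,n})_n$ of finite-dimensional subspaces of $\b U_0$, spanned by smooth divergence-free fields vanishing on $\partial\Omega$, with dense union, and look for $\b u_n \in \b U_{0,n}$ solving the Galerkin equations. On each finite-dimensional space the associated vector field is continuous by \eqref{eq:ass:stress:holder.cont} and \eqref{eq:ass:conv:holder.cont}, and, by the estimate of the previous paragraph --- which only uses $c(\b u_n, \b u_n) = 0$ and hence holds unconditionally in $\sob$ and $\conv$ --- it points inward on a large ball, so Brouwer's theorem provides $\b u_n$ with $\|\b u_n\|_{W^{1,\sob}(\Omega)^d}$ bounded uniformly. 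Along a subsequence, $\b u_n \rightharpoonup \b u$ in $W^{1,\sob}(\Omega)^d$, $\b u_n \to \b u$ in $L^q(\Omega)^d$ for $q < \sob^*$ and a.e.\ (Rellich--Kondrachov), $\stress(\cdot, \GRADs \b u_n) \rightharpoonup \b\Sigma$ in $L^{\sob'}(\Omega, \Ms{d})$ (its boundedness following from \eqref{eq:ass:stress:holder.cont} at $\b\eta = \b 0$), and $\convec(\cdot, \b u_n) \to \convec(\cdot, \b u)$ strongly in a suitable Lebesgue space by \eqref{eq:ass:conv:holder.cont}, \eqref{eq:ass:conv:0}, a.e.\ convergence and a Vitali argument. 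One then passes to the limit in the Galerkin equations: the convective term converges by combining the weak convergence of $\GRAD \b u_n$ with the strong convergence of $\convec(\cdot, \b u_n)$, after checking that the three integrals in \eqref{eq:weak:c} are finite on $\b U_0$ and that the apparent singularity of the second at $\b w = \b 0$ is harmless; the viscous term is identified by Minty's trick, since $a(\b u_n, \b u_n) = \int_\Omega \b f \cdot \b u_n \to \int_\Omega \b f \cdot \b u$ and the monotonicity and hemicontinuity of $\b\tau \mapsto \stress(\cdot, \b\tau)$ force $\b\Sigma = \stress(\cdot, \GRADs \b u)$. Hence $\b u$ solves the reduced problem and $p$ is recovered as above. (Equivalently, $a + c$ defines a coercive pseudomonotone operator on $\b U_0$ --- a monotone part plus a strongly continuous perturbation --- and the Brezis surjectivity theorem applies.) It is here that the interplay of $\sob$ and $\conv$ enters, through the Sobolev exponent $\sob^*$ needed for $c$ to be well defined and sequentially continuous on $\b U_0$.

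\emph{Uniqueness and main obstacle.} Given two solutions $(\b u_1, p_1)$ and $(\b u_2, p_2)$, set $\b w := \b u_1 - \b u_2 \in \b U_0$; subtracting the two momentum equations and testing with $\b w$ removes the pressure ($\DIV \b w = 0$) and leaves $a(\b u_1, \b w) - a(\b u_2, \b w) = -\big(c(\b u_1, \b w) - c(\b u_2, \b w)\big)$. Bounding the left-hand side from below by \eqref{eq:ass:stress:strong.mono} and the right-hand side from above --- by splitting $c(\b u_1, \b w) - c(\b u_2, \b w)$ into pieces and estimating each with \eqref{eq:ass:conv:holder.cont}, the Sobolev embedding, and the bound \eqref{eq:well-posedness:bound:u} for $\|\b u_1\|_{W^{1,\sob}(\Omega)^d}$ and $\|\b u_2\|_{W^{1,\sob}(\Omega)^d}$ --- one arrives at an inequality of the form $C_\cst{a}\sigma_\cst{hm}\,N(\b w)^2 \lesssim C_{\cst{c}, \sobs}\chi_\cst{hc}\, M^{\conv - \sobs} N(\b w)^2$, where $N(\b w)$ is a $W^{1,\sob}$-type quasi-norm of $\GRADs \b w$ adapted to $\delta$ and $M$ is controlled by the data. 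The restriction $2 \le \conv \le \frac{\sobs^*}{\sobs'}$ of \eqref{eq:intervals.s:cont.uniqueness} is exactly what makes this splitting available with the correct homogeneity (the exponent $\convs - 1$ in \eqref{eq:ass:conv:holder.cont} combining with the gradient factors to give the same power of $N(\b w)$ on both sides), and the smallness condition \eqref{eq:well-posedness:f} --- in which one recognizes the coercivity constant $C_\cst{a}$ of $a$, the constant $C_{\cst{c}, \sobs}$ above, and the factor $1 + 2C_\cst{v}^\sob$ coming from bounding $\delta^\sob + \|\GRADs \b u_1\|_{L^\sob(\Omega)^{d \times d}}^\sob + \|\GRADs \b u_2\|_{L^\sob(\Omega)^{d \times d}}^\sob$ by \eqref{eq:well-posedness:bound:u} --- makes this inequality incompatible with $\GRADs \b w \ne \b 0$. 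Hence $\b u_1 = \b u_2$, and $p_1 = p_2$ follows from the inf--sup stability of $b$. The crux of the whole argument --- and the step I expect to be the hardest --- is precisely this sharp control of $c$: pinning down the range of $(\sob, \conv)$ for which $c$ is well defined and sequentially continuous (existence), and producing the H\"older-continuity estimate for $c$ in its first argument, through a degeneracy-adapted quasi-norm, whose constant can be traded against the coercivity of $a$ under \eqref{eq:well-posedness:f} (uniqueness).
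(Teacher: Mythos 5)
Your proposal takes essentially the same route as the paper: existence and the a priori bound \eqref{eq:well-posedness:bound:u} are obtained by exploiting $c(\b w,\b w)=0$ to reduce to the (generalized Stokes) argument of \cite{Botti.Castanon-Quiroz.ea:20}, while uniqueness follows by subtracting the two momentum equations, testing with the difference, bounding below by the H\"older monotonicity of $a$ (Lemma \ref{lem:a:strong.mono} with $m=\sobs$) and above by the H\"older continuity of $c$ (Lemma \ref{lem:c:holder.cont} with $m=\sobs$, where $s\ge 2$ forces $\convs=2$ so both sides are quadratic in the difference), and invoking \eqref{eq:well-posedness:bound:u}. One small imprecision in your description of the uniqueness step: the quantity you denote $N(\b w)$ and call a degeneracy-adapted quasi-norm is, in the paper's argument, simply the $W^{1,\sobs}(\Omega)^d$-seminorm of $\b e=\b u-\b u'$; the degeneracy $\delta$ enters not through a modified norm on $\b e$ but through the multiplicative factor $\delta^{\sob-\sobs}$ in the monotonicity lower bound \eqref{eq:a:strong.mono}, which is precisely why the right-hand side of \eqref{eq:well-posedness:f} collapses to zero as $\delta\to 0$ when $r>2$ (cf.\ Remark \ref{rem:uniqueness}).
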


\begin{remark}[Uniqueness]\label{rem:uniqueness}
  Uniqueness of the solution of \eqref{eq:ns.weak} is not guaranteed for any value of $r$ and $s$, and in particular when $r > 2$ in the degenerate case $\delta = 0$.
  The assumptions on $r$ and $s$ ensuring the uniqueness of the continuous weak solution will carry out to the discrete level, in both the well-posedness result of Theorem \ref{thm:discrete.well-posedness} and the error estimate of Theorem \ref{thm:error.estimate}, where $r\le 2$ is additionally required.
\end{remark}

\begin{proof}[Proof of Theorem \ref{thm:well-posedness}]\label{proof:thm:well-posedness}
  1. \emph{Existence.}
  Replacing the function $a$ by the sum $a+c$ in \cite[Remark 6]{Botti.Castanon-Quiroz.ea:20} and using the non-dissipativity \eqref{eq:c:zero} of the convective function $c$ yields the existence of a solution to the weak formulation \eqref{eq:ns.weak} and the a priori estimate \eqref{eq:well-posedness:bound:u}, see also \cite[Proposition 6]{Di-Pietro.Droniou.ea:21} for the $\min$-term.
  \medskip\\
  2. \emph{Uniqueness.}
  Let $(\b u,p),(\b u',p') \in \b U \times P$ be two solutions of \eqref{eq:ns.weak}. Taking the difference of \eqref{eq:ns.weak:momentum} written first for $(\b u,p)$ and then for $(\b u',p')$, we infer, for all $\b v \in \b U$,
  \begin{equation} \label{eq:well-posedness:0}
    a(\b u,\b v)-a(\b u',\b v)  + c(\b u,\b  v)- c(\b u',\b v)+b(\b v,p-p') = 0.
  \end{equation}
  If $\b u = \b u'$, \eqref{eq:well-posedness:0} yields $b(\b v,p-p') = 0$ for all $\b v \in \b U$. This relation combined with the inf-sup stability of $b$ (cf. \cite[Theorem 1]{Bogovski:79}) yields,
  \[
  \| p-p' \|_{L^{\sob'}(\Omega)} \le C_\cst{b} \sup\limits_{\b v \in {\b U}, |\b v|_{W^{1,r}(\Omega)^d} = 1} b(\b v,p-p') = 0.
  \]
  Hence, uniqueness of the solution is equivalent to uniqueness of the velocity.

  Assume now $2 \le s \le \frac{\sobs^*}{\sobs'}$ and $(\b u,p) \neq (\b u',p')$. Setting $\b e \coloneq \b u-\b u'$, the previous reasoning yields $\b u\neq \b u'$, hence $| \b e|_{W^{1,\sobs}(\Omega)^d} > 0$. 
  Using the H\"older continuity \eqref{eq:c:holder.cont} of $c$ proved in Lemma \ref{lem:c:holder.cont} below with $(\b u,\b w,\b v) = (\b u, \b u',\b e)$ and $m=\tilde r$, we infer
  \[
  \begin{aligned}
    |\b e|_{W^{1,\sobs}(\Omega)^d}^{2} 
    &\ge C_{\cst{c},\sobs}^{-1}\chi_\cst{hc}^{-1}\left(| \b u|_{W^{1,r}(\Omega)^d}^\sob+|\b u'|_{W^{1,r}(\Omega)^d}^\sob\right)^\frac{1-\conv}{\sob}\left(c(\b u',\b e)-c(\b u,\b e)\right) \\
    &= C_{\cst{c},\sobs}^{-1}\chi_\cst{hc}^{-1}\left(| \b u|_{W^{1,r}(\Omega)^d}^\sob+|\b u'|_{W^{1,r}(\Omega)^d}^\sob\right)^\frac{1-\conv}{\sob}\left(a(\b u,\b e)-a(\b u',\b e)\right) \\ 
    &\ge  C_{\cst{c},\sobs}^{-1}\chi_\cst{hc}^{-1}C_\cst{a}\sigma_\cst{hm}\left(\delta^\sob+| \b u|_{W^{1,r}(\Omega)^d}^\sob+| \b u'|_{W^{1,r}(\Omega)^d}^\sob\right)^\frac{\sobs-1-s}{\sob}\delta^{r-\sobs}| \b e|_{W^{1,\sobs}(\Omega)^d}^{2}\\
    &\ge C_{\cst{c},\sobs}^{-1}\chi_\cst{hc}^{-1}C_\cst{a}\sigma_\cst{hm}(1+2C_\cst{v}^r)^\frac{\sobs-1-s}{r}\left[
      \delta^r+\left(\sigma_\cst{hm}^{-1}\| \b f \|_{L^{\sob'}(\Omega)^d}\right)^{r'}
      \right]^\frac{\sobs-1-s}{r}\delta^{r-\sobs}| \b e|_{W^{1,\sobs}(\Omega)^d}^{2},\\
  \end{aligned}
  \]
  where the equality in the second line is obtained using \eqref{eq:well-posedness:0} with $\b v = \b e$ together with the fact that $b(\b e,p-p')=0$ thanks to \eqref{eq:ns.weak:mass}, the third line is obtained invoking the H\"older monotonicity \eqref{eq:a:strong.mono} of $a$ with $m=\sobs$,
  while the conclusion follows using the a priori bound \eqref{eq:well-posedness:bound:u}.
  Since $| \b e|_{W^{1,\sobs}(\Omega)^d} > 0$, simplifying and raising to the power $\frac{r}{\sobs-1-\conv} < 0$, we infer the contrapositive of \eqref{eq:well-posedness:f}.
\end{proof}

We next prove the H\"older monotonicity of $a$ used in the proof of Theorem \ref{proof:thm:well-posedness} above after recalling the following result.
Let $X \subset \R^d$ be measurable, $n \in \N^*$, and let $t,p_1,\ldots,p_n \in (0,\infty\rbrack$ be such that $\sum_{i=1}^n\frac{1}{p_i} = \frac{1}{t}$.
The continuous $(t;p_1,\ldots,p_n)$-H\"older inequality reads:
For any $(f_1,\ldots,f_n) \in \bigtimes_{i=1}^n L^{p_i}(X)$, 
\begin{equation}\label{eq:holder}
  \left\| \prod_{i=1}^n f_i \right\|_{L^t(X)} \le\ \prod_{i=1}^n\| f_i \|_{L^{p_i}(X)}.
\end{equation}

\begin{lemma}[H\"older monotonicity of $a$]\label{lem:a:strong.mono}
   For $m \in \{\sobs,\sob\}$ and all $\b u,\b w \in \b U$, it holds
  \begin{equation}\label{eq:a:strong.mono}
a(\b u,\b u - \b w)-a(\b w,\b u - \b w)  \ge C_\cst{a}\sigma_\cst{hm}\left(\delta^\sob+| \b u|_{W^{1,r}(\Omega)^d}^\sob+| \b w|_{W^{1,r}(\Omega)^d}^\sob\right)^\frac{\sobs-2}{\sob}\delta^{\sob-m}| \b u - \b w|_{W^{1,m}(\Omega)^d}^{m+2-\sobs}.
  \end{equation}
\end{lemma}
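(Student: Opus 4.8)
The plan is to reduce the claimed inequality to the pointwise H\"older monotonicity \eqref{eq:ass:stress:strong.mono} of $\stress$ and then bound the resulting integral from below, distinguishing the regimes $\sob\ge2$ and $\sob<2$ and using the Korn inequality \eqref{eq:Korn} to pass between $L^m$-norms of symmetric gradients and $W^{1,m}$-seminorms. Concretely, by the definition \eqref{eq:weak:ab} of $a$ one has $a(\b u,\b u-\b w)-a(\b w,\b u-\b w)=\int_\Omega\bigl(\stress(\cdot,\GRADs\b u)-\stress(\cdot,\GRADs\b w)\bigr):(\GRADs\b u-\GRADs\b w)$, so applying \eqref{eq:ass:stress:strong.mono} pointwise (with $\b\tau=\GRADs\b u$ and $\b\eta=\GRADs\b w$) and integrating, it suffices to bound from below $J\coloneqq\int_\Omega F^{\frac{\sob-2}{\sob}}G^2$, where $F\coloneqq\delta^\sob+|\GRADs\b u|_{d\times d}^\sob+|\GRADs\b w|_{d\times d}^\sob$ and $G\coloneqq|\GRADs\b u-\GRADs\b w|_{d\times d}$. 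At the end, \eqref{eq:Korn} bounds $\|\GRADs(\b u-\b w)\|_{L^m(\Omega)^{d\times d}}$ from below by $C_{\cst{K},m}^{-1}|\b u-\b w|_{W^{1,m}(\Omega)^d}$, while, up to a constant depending only on $d$, $\|\GRADs\b u\|_{L^\sob(\Omega)^{d\times d}}$ and $\|\GRADs\b w\|_{L^\sob(\Omega)^{d\times d}}$ are bounded above by $|\b u|_{W^{1,\sob}(\Omega)^d}$ and $|\b w|_{W^{1,\sob}(\Omega)^d}$.

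If $\sob\ge2$, so that $\sobs=2$ and $\frac{\sob-2}{\sob}\ge0$, the estimate is elementary. For $m=\sob$, combining $|\b\tau-\b\eta|\le|\b\tau|+|\b\eta|$ with the convexity of $t\mapsto t^\sob$ gives $F\ge2^{1-\sob}G^\sob$, hence $F^{\frac{\sob-2}{\sob}}G^2\ge2^{\frac{(1-\sob)(\sob-2)}{\sob}}G^\sob$ pointwise and $J\ge2^{\frac{(1-\sob)(\sob-2)}{\sob}}\|\GRADs(\b u-\b w)\|_{L^\sob(\Omega)^{d\times d}}^\sob$; since $\sobs=2$ makes $m+2-\sobs=\sob$ and $\delta^{\sob-m}=1$, \eqref{eq:Korn} concludes. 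For $m=\sobs=2$, one uses only $F\ge\delta^\sob$, whence $F^{\frac{\sob-2}{\sob}}\ge\delta^{\sob-2}$ and $J\ge\delta^{\sob-2}\|\GRADs(\b u-\b w)\|_{L^2(\Omega)^{d\times d}}^2$; here $\delta^{\sob-m}=\delta^{\sob-2}$, $m+2-\sobs=2$, the $F$-factor of the target carries exponent $\frac{\sobs-2}{\sob}=0$, and \eqref{eq:Korn} again concludes.

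The case $\sob<2$, where $\sobs=\sob=m$, $\delta^{\sob-m}=1$ and $m+2-\sobs=2$, is the crux, because now $\frac{\sob-2}{\sob}<0$ and the target reads $J\ge C_\cst{a}\bigl(\delta^\sob+|\b u|_{W^{1,\sob}(\Omega)^d}^\sob+|\b w|_{W^{1,\sob}(\Omega)^d}^\sob\bigr)^{\frac{\sob-2}{\sob}}|\b u-\b w|_{W^{1,\sob}(\Omega)^d}^2$. I would use a reverse-H\"older argument: write $G^\sob=\bigl(F^{\frac{\sob-2}{\sob}}G^2\bigr)^{\sob/2}F^{\frac{2-\sob}{2}}$ and apply the continuous $\bigl(1;\frac{2}{\sob},\frac{2}{2-\sob}\bigr)$-H\"older inequality \eqref{eq:holder} (admissible since $\frac{2}{\sob}>1$) to obtain $\|\GRADs(\b u-\b w)\|_{L^\sob(\Omega)^{d\times d}}^\sob=\int_\Omega G^\sob\le J^{\sob/2}\bigl(\int_\Omega F\bigr)^{\frac{2-\sob}{2}}$; solving for $J$ and raising to the power $\frac{2}{\sob}>0$ gives $J\ge\|\GRADs(\b u-\b w)\|_{L^\sob(\Omega)^{d\times d}}^2\bigl(\int_\Omega F\bigr)^{\frac{\sob-2}{\sob}}$. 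Finally $\int_\Omega F=\delta^\sob|\Omega|+\|\GRADs\b u\|_{L^\sob(\Omega)^{d\times d}}^\sob+\|\GRADs\b w\|_{L^\sob(\Omega)^{d\times d}}^\sob$ is bounded above by a constant times $\delta^\sob+|\b u|_{W^{1,\sob}(\Omega)^d}^\sob+|\b w|_{W^{1,\sob}(\Omega)^d}^\sob$, and because $\frac{\sob-2}{\sob}<0$ this bound reverses when raised to that power; together with \eqref{eq:Korn} applied to $\|\GRADs(\b u-\b w)\|_{L^\sob(\Omega)^{d\times d}}^2$ this yields the claim, with $C_\cst{a}$ collecting $C_{\cst{K},\sob}$, the constant from the previous step, and $|\Omega|$, hence depending only on $\Omega$, $d$, and $\sob$. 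I expect the only real obstacle to be the bookkeeping of exponents and constants making all three sub-cases ($\sob\ge2$ with $m=\sob$; $\sob\ge2$ with $m=2$; $\sob<2$) collapse to the single closed-form bound, together with the (routine, not detailed here) interpretation of $F^{\frac{\sob-2}{\sob}}G^2$ on $\{\GRADs\b u=\GRADs\b w\}$ when $\delta=0$.
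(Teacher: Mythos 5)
Your proof is correct and follows essentially the same route as the paper: reduce to the pointwise H\"older monotonicity \eqref{eq:ass:stress:strong.mono}, then for $\sob<2$ use a reverse-H\"older argument and for $\sob>2$, $m=2$ bound $F^{\frac{2-\sob}{\sob}}$ above by $\delta^{2-\sob}$, finishing with the Korn inequality \eqref{eq:Korn}. The only difference is one of exposition: the paper delegates the $m=\sob$ case to \cite[Eq.~(46)]{Botti.Castanon-Quiroz.ea:20}, whereas you reconstruct that step explicitly (with the elementary pointwise bound $F\ge 2^{1-\sob}G^\sob$ when $\sob\ge2$ and the reverse-H\"older inequality when $\sob<2$), and handle the $\sob>2$, $m=2$ case exactly as the paper does.
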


\begin{proof}
  The case $m=\sob$ is obtained reasoning as in \cite[Eq. (46)]{Botti.Castanon-Quiroz.ea:20} and using the Korn inequality \eqref{eq:Korn}.
  It remains to prove the case $\sob > 2$ with $m=\sobs=2$.
  Let $\b e \coloneq \b u - \b w$. Using the H\"older monotonicity \eqref{eq:ass:stress:strong.mono} of $\stress$ with $(\b\tau,\b\eta) = (\GRADs\b u,\GRADs\b w)$ yields
  \begin{equation}\label{eq:a:sm:0}
    \begin{aligned}
      \sigma_\cst{hm}\| \GRADs \b e\|_{L^{2}(\Omega)^{d\times d}}^{2}
      &\le \int_\Omega \left(\delta^\sob+|\GRADs\b u|_{d\times d}^\sob+|\GRADs\b w|_{d\times d}^\sob\right)^{\frac{2-\sob}{r}}
      \big(\stress(\cdot,\GRADs\b u)-\stress(\cdot,\GRADs\b w)\big):\GRADs \b e
      \\
      &\le \delta^{2-\sob}  \big(a(\b u,\b e)-a(\b w,\b e)\big),
    \end{aligned}
  \end{equation}
  where we used the monotonicity of $\R \ni x \mapsto x^{2-\sob} \in \R$ since $r > 2$, together with the definition \eqref{eq:weak:ab} of $a$.
  Applying the Korn inequality \eqref{eq:Korn} yields \eqref{eq:a:strong.mono}.
\end{proof}

We move to the H\"older continuity of $c$ used in the proof of Theorem \ref{proof:thm:well-posedness} above after recalling some preliminary results.
For any $n \in \N^*$, the $\conv$-power framed function $\R^n \ni x \mapsto |x|^{\conv-2}x \in \R^n$ enjoys the following properties (see \cite[Appendix A]{Botti.Castanon-Quiroz.ea:20}):
There exist $C_\cst{hc},C_\cst{hm} \in (0,\infty)$ depending only on $s$, such that the following H\"older continuity and the H\"older monotonicity properties hold for all $x,y \in \R^n$,
\begin{subequations}\label{eq:s.power}
\begin{align}
||x|^{\conv-2}x-|y|^{\conv-2}y| &\le C_\cst{hc} (|x|^\conv+|y|^\conv)^\frac{\conv-\convs}{\conv}|x - y|^{\convs-1}, \label{eq:s.power:holder.cont}\\
(|x|^{\conv-2}x-|y|^{\conv-2}y) \cdot (x-y) &\ge C_\cst{hm}(|x|^\conv+|y|^\conv)^\frac{\convs-2}{\conv} |x - y|^{\conv+2-\convs}.\label{eq:s.power:strong.mono}
\end{align}
\end{subequations}
Finally, we recall the following Sobolev embeddings:
For all $m \in [1,\infty)$ such that $m \le r^*$ and all $\b v \in \b U$,
\begin{equation}\label{eq:sob:emb}
\|\b v\|_{L^m(\Omega)^d} \le C_{\cst{S},m} | \b v|_{W^{1,r}(\Omega)^d},
\end{equation}
where $C_{\cst{S},m} > 0$ is a real number depending only on $m$, $r$, $d$, and $\Omega$.

\begin{lemma}[H\"older continuity of $c$]\label{lem:c:holder.cont}
  Under Assumption \ref{ass:conv}, for all $m \in [1,r]$ such that
  \begin{equation}\label{eq:cont.c:s}
    \conv \le \frac{m^*}{m'},
  \end{equation}
  and all $\b u, \b v, \b w \in \b U$, 
  \begin{equation}\label{eq:c:holder.cont}
      \left|
      c(\b u,\b v)-c(\b w,\b v)
      \right| \le C_{\cst{c},m}\chi_\cst{hc}\left(| \b u|_{W^{1,r}(\Omega)^d}^\sob+| \b w|_{W^{1,r}(\Omega)^d}^\sob\right)^\frac{\conv+1-\convs}{\sob}| \b u-\b w|_{W^{1,m}(\Omega)^d}^{\convs-1}| \b v|_{W^{1,m}(\Omega)^d},
  \end{equation}
where $C_{\cst{c},m} > 0$ depends only on $m$, $r$, $s$, $d$, and $\Omega$.
\end{lemma}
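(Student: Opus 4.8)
\textbf{Proof plan for Lemma \ref{lem:c:holder.cont}.}

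The plan is to estimate the three integrals that make up $c(\b u,\b v)-c(\b w,\b v)$ (using the definition \eqref{eq:weak:c}) separately, in each case splitting the difference into a part where the convection law $\convec$ is evaluated at two different points and a part where the remaining factors differ. First I would write, for each of the three terms, a telescoping decomposition of the form
\[
\tri{\convec(\cdot,\b u)}{\GRAD}{\b u}{\b v}-\tri{\convec(\cdot,\b w)}{\GRAD}{\b w}{\b v}
= \tri{\big(\convec(\cdot,\b u)-\convec(\cdot,\b w)\big)}{\GRAD}{\b u}{\b v}
+ \tri{\convec(\cdot,\b w)}{\GRAD}{(\b u - \b w)}{\b v},
\]
and analogously for the term with $\GRAD\b v$ contracted against $\convec(\cdot,\b w)$ and for the more delicate middle term of \eqref{eq:weak:c} involving $\frac{\b v\cdot\b w}{|\b w|^2}$, which will require first rewriting $\convec(\cdot,\b w)\tri{}{\GRAD}{\b w}{\b w}$ and $\convec(\cdot,\b u)\tri{}{\GRAD}{\b u}{\b u}$ in a comparable form before subtracting. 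On the factor $\convec(\cdot,\b u)-\convec(\cdot,\b w)$ I would apply the H\"older continuity \eqref{eq:ass:conv:holder.cont} of $\convec$, and on the factor $\convec(\cdot,\b w)$ alone I would use \eqref{eq:ass:conv:holder.cont} together with \eqref{eq:ass:conv:0} to get the bound $|\convec(\cdot,\b w)|\le\chi_\cst{hc}|\b w|^{\conv-1}$.

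Next I would control each product of $L^{p_i}$-type factors by repeated application of the continuous H\"older inequality \eqref{eq:holder}. The typical term, after using \eqref{eq:ass:conv:holder.cont}, is an integrand of the form $(|\b u|^\conv+|\b w|^\conv)^{\frac{\conv-\convs}{\conv}}\,|\b u-\b w|^{\convs-1}\,|\GRAD\b u|\,|\b v|$; I would bound the first factor in $L^{\frac{\conv}{\conv-\convs}}$, the factor $|\b u-\b w|^{\convs-1}$ in $L^{\frac{m^*}{\convs-1}}$, the gradient $|\GRAD\b u|$ in $L^{\sob}$, and $|\b v|$ in $L^{m^*}$, checking that the reciprocal exponents sum to $1$ — this is where the hypothesis \eqref{eq:cont.c:s}, i.e. $\conv\le\frac{m^*}{m'}$, is exactly what makes the bookkeeping close. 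Then I would convert the $L^{m^*}$-norms of $\b u-\b w$ and $\b v$ into $W^{1,m}$-seminorms via the Sobolev embedding \eqref{eq:sob:emb} (applied with the exponent $m$ in place of $r$, which is legitimate since $m\le r$ and $m\le m^*$), convert the $L^{s}$- and $L^{s}$-type norms of $\b u,\b w$ into $W^{1,r}$-seminorms via \eqref{eq:sob:emb} again (here using $\conv\le r^*$, which follows from $m\le r$ and \eqref{eq:cont.c:s}), and bound $\|\GRAD\b u\|_{L^r}\le|\b u|_{W^{1,r}(\Omega)^d}$. Collecting exponents, every term is dominated by $\chi_\cst{hc}\big(|\b u|_{W^{1,r}(\Omega)^d}^\sob+|\b w|_{W^{1,r}(\Omega)^d}^\sob\big)^{\frac{\conv+1-\convs}{\sob}}|\b u-\b w|_{W^{1,m}(\Omega)^d}^{\convs-1}|\b v|_{W^{1,m}(\Omega)^d}$ up to a constant depending only on $m,r,s,d,\Omega$, and summing the contributions gives \eqref{eq:c:holder.cont} with a suitable $C_{\cst{c},m}$.

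The main obstacle is the middle term of \eqref{eq:weak:c}, $\frac{\conv-2}{\conv}\int_\Omega\frac{\b v\cdot\b w}{|\b w|^2}\tri{\convec(\cdot,\b w)}{\GRAD}{\b w}{\b w}$: the factor $\frac{\b w}{|\b w|^2}$ is singular where $\b w$ vanishes, so one cannot differentiate this expression naively in $\b w$. The resolution is that, by \eqref{eq:ass:conv:0} and \eqref{eq:ass:conv:holder.cont}, $|\convec(\cdot,\b w)|\lesssim|\b w|^{\conv-1}$, so $\big|\frac{\b w}{|\b w|^2}(\convec(\cdot,\b w)\cdot\GRAD\b w)\cdot\b w\big|\lesssim|\b w|^{\conv-1}|\GRAD\b w|$, which is exactly the same homogeneity as the other two terms; the quantity $\frac{\b w}{|\b w|^2}(\convec(\cdot,\b w)\cdot\b w)$ is therefore a bounded (indeed H\"older-continuous, by the same argument used to establish \eqref{eq:ass:conv:holder.cont} for $\convec$ composed with the $s$-power structure) function of $\b w$, so the telescoping decomposition and the H\"older-inequality bookkeeping go through verbatim, at the cost of a slightly more careful pointwise estimate of the difference of the singular factors evaluated at $\b u$ and at $\b w$. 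One should also record that the constraint $m\le r$ together with \eqref{eq:cont.c:s} guarantees all invoked Sobolev embeddings are admissible, which closes the argument.
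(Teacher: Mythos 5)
Your overall strategy — telescoping decomposition of $c(\b u,\b v)-c(\b w,\b v)$, applying \eqref{eq:ass:conv:holder.cont} (in particular the single-argument bound $|\convec(\cdot,\b w)|\le\chi_\cst{hc}|\b w|^{\conv-1}$), H\"older inequalities plus Sobolev embeddings, and resolving the singular middle term via the $s$-power-framed estimate \eqref{eq:s.power:holder.cont} — is essentially the same as the paper's, which decomposes the difference into six terms $\mathcal T_1,\dots,\mathcal T_6$ and treats them in exactly this spirit (the pointwise inequality \eqref{eq:c:holder.cont:1} is the precise version of your "H\"older-continuous function of $\b w$" observation).

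There are, however, two concrete gaps. First, the H\"older exponents you propose for the typical term do not always close. You suggest putting $(|\b u|^\conv+|\b w|^\conv)^{(\conv-\convs)/\conv}$ in $L^{\conv/(\conv-\convs)}$, $|\b u-\b w|^{\convs-1}$ in $L^{m^*/(\convs-1)}$, the gradient in $L^{\sob}$, and $|\b v|$ in $L^{m^*}$. The reciprocals sum to $1-\tfrac{\convs}{\conv}+\tfrac{\convs}{m^*}+\tfrac{1}{r}$, which exceeds $1$ for admissible data: with $\conv=3$, $m=r=2$, $d=3$ (so $m^*=6$, $m'=2$, and \eqref{eq:cont.c:s} holds with equality) one gets $\tfrac13+\tfrac16+\tfrac12+\tfrac16=\tfrac76>1$. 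In addition, when $m=d$ the target $L^{m^*}=L^\infty$ is not reached by $W^{1,m}$. The paper instead uses the $(1;\tfrac{\conv m'}{\conv-\convs},\tfrac{\conv m'}{\convs-1},m,\conv m')$-H\"older split, whose reciprocals sum exactly to $1$, and then uses \eqref{eq:cont.c:s} in the form $\conv m'\le m^*$ (and $m\le r\Rightarrow m^*\le r^*$) to invoke the Sobolev embeddings; $\conv m'$ is always finite, so the $m=d$ corner case disappears. Second, for the terms where $\convec$ appears at a single argument (the paper's $\mathcal T_1,\mathcal T_3,\mathcal T_5$, the analogues of your "$\convec(\cdot,\b w)$ alone" pieces), the H\"older step naturally produces $|\b u-\b w|_{W^{1,m}}$ to the first power, not to the power $\convs-1$ required by \eqref{eq:c:holder.cont}. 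One must split $|\b u-\b w|_{W^{1,m}}=|\b u-\b w|_{W^{1,m}}^{2-\convs}|\b u-\b w|_{W^{1,m}}^{\convs-1}$, bound the $(2-\convs)$-power by $|\b u-\b w|_{W^{1,r}}^{2-\convs}$ using $m\le r$, and absorb it into the leading factor via the triangle inequality; your "typical term" (an integrand already carrying $|\b u-\b w|^{\convs-1}$) only models the $\convec$-difference pieces and does not address this extra step.
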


\begin{proof}
  Throughout the proof, $a\lesssim b$ (resp. $a\gtrsim b$) means $a\le Cb$ (resp. $a\ge Cb$) with $C>0$ having the same dependencies as $C_{\cst{c},m}$.
  
  Using the definition \eqref{eq:weak:c} of $c$ and inserting
  $\pm\big(
  {\frac{1}{\conv}\int_\Omega \tri{\convec(\cdot,\b u) }{\GRAD}{\b w}{\b v}}
  + {\frac{\conv-2}{\conv}\int_\Omega \frac{\b v \cdot \b u}{|\b u|^{2}}\tri{\convec(\cdot,\b u) }{\GRAD}{\b w}{\b u}}
  + {\frac{1}{\conv'}\int_\Omega \tri{\convec(\cdot,\b u) }{\GRAD}{\b v}{\b w}}
  \big)
  $,
  we get
  \begin{equation}\label{eq:c:holder.cont:decomp}
    \begin{aligned}
      &c(\b u,\b v)-c(\b w,\b v) = \frac{1}{\conv}\biggl(\!~\underbrace{\int_\Omega \tri{\convec(\cdot,\b u) }{\GRAD}{(\b u-\b w)}{\b v}}_{\mathcal T_1}\
      + \underbrace{\int_\Omega \btri{(\convec(\cdot,\b u) -\convec(\cdot,\b w) )}{\GRAD}{\b w}{\b v}}_{\mathcal T_2}\!~\biggr) \\
      &\ +\frac{\conv-2}{\conv}\biggl(\!~\underbrace{\int_\Omega \frac{\b v \cdot \b u}{|\b u|^{2}}\tri{\convec(\cdot,\b u) }{\GRAD}{(\b u-\b w)}{\b u}}_{\mathcal T_3}\ +\underbrace{\int_\Omega\left( \frac{\b v \cdot \b u}{|\b u|^{2}}\tri{\convec(\cdot,\b u) }{\GRAD}{\b w}{\b u}-\frac{\b v \cdot \b w}{|\b w|^{2}}\tri{\convec(\cdot,\b w) }{\GRAD}{\b w}{\b w}\right)}_{\mathcal T_4}\!~\biggr) \\
      &\ -\frac{1}{\conv'}\biggl(\!~\underbrace{\int_\Omega \tri{\convec(\cdot,\b u) }{\GRAD}{\b v}{(\b u-\b w)} }_{\mathcal T_5}\ +\underbrace{\int_\Omega \btri{(\convec(\cdot,\b u) -\convec(\cdot,\b w) )}{\GRAD}{\b v}{\b w}}_{\mathcal T_6}\!~\biggr).
    \end{aligned}
  \end{equation}
  We start by writing
  \begin{equation}\label{eq:c:holder.cont:T1+T3}
    \begin{aligned}
      |\mathcal T_1|
      + |\mathcal T_3|
      &\lesssim \int_\Omega |\convec(\cdot,\b u)||\GRAD(\b u-\b w)|_{d\times d}|\b v| \\
      &\le \chi_\cst{hc}\int_\Omega |\b u|^{s-1}|\GRAD(\b u-\b w)|_{d\times d}|\b v| \\
      &\le \chi_\cst{hc}\|\b u\|_{L^{\conv m'}(\Omega)^d}^{s-1}|\b u-\b w|_{W^{1,m}(\Omega)^d}\|\b v \|_{L^{\conv m'}(\Omega)^d}\\
      &\lesssim\chi_\cst{hc}| \b u|_{W^{1,r}(\Omega)^d}^{s-1}| \b u-\b w|_{W^{1,r}(\Omega)^d}^{2-\convs}| \b u-\b w|_{W^{1,m}(\Omega)^d}^{\convs-1}| \b v|_{W^{1,m}(\Omega)^d}\\
      &\lesssim\chi_\cst{hc}\!\left(| \b u|_{W^{1,r}(\Omega)^d}^\sob+| \b w|_{W^{1,r}(\Omega)^d}^\sob\right)^\frac{\conv+1-\convs}{\sob}| \b u-\b w|_{W^{1,m}(\Omega)^d}^{\convs-1}| \b v|_{W^{1,m}(\Omega)^d},
    \end{aligned}
  \end{equation}
  where we have used the H\"older continuity \eqref{eq:ass:conv:holder.cont} of $\convec$ with $(\b w, \b v) = (\b u, \b 0)$ to pass to the second line,
  the $(1;\frac{\conv m'}{s-1},m,\conv m')$-H\"older inequality \eqref{eq:holder} in the third line,
  the Sobolev embedding \eqref{eq:sob:emb} (valid since $\conv m' \le m^*$ by \eqref{eq:cont.c:s} and $m\le r\implies m^*\le r^*$, so that $\conv m'\le r^*$) together with the $(1;\frac{r}{m},\frac{r}{r-m})$-H\"older inequality \eqref{eq:holder} (valid since $m \le r$) in the fourth line, while the conclusion follows writing first $| \b u|_{W^{1,r}(\Omega)^d}^{\conv-1} \le (| \b u|_{W^{1,r}(\Omega)^d}+| \b w|_{W^{1,r}(\Omega)^d})^{\conv-1}$ by monotonicity of $\R \ni x \mapsto x^{s-1} \in \R$,
  then $| \b u-\b w|_{W^{1,r}(\Omega)^d}^{2-\convs} \le (| \b u|_{W^{1,r}(\Omega)^d}+| \b w|_{W^{1,r}(\Omega)^d})^{2-\convs}$ by a triangle inequality,
  and, finally, noticing that $(x+y)^r \lesssim x^r+y^r$ for all $x,y \in [0,\infty)$ (see \cite[Eq. (36)]{Botti.Castanon-Quiroz.ea:20}).
  Similar arguments give for the fifth term
  \begin{equation}\label{eq:c:holder.cont:T5}
    \begin{aligned}
      |\mathcal T_5|
      &\le \chi_\cst{hc}\int_\Omega |\b u|^{s-1}|\b u-\b w||\GRAD\b v|_{d\times d} \\
      &\le \chi_\cst{hc}\|\b u\|_{L^{\conv m'}(\Omega)^d}^{s-1}\|\b u-\b w \|_{L^{\conv m'}(\Omega)^d}|\b v|_{W^{1,m}(\Omega)^d}\\
      &\lesssim\chi_\cst{hc}\!\left(| \b u|_{W^{1,r}(\Omega)^d}^\sob+| \b w|_{W^{1,r}(\Omega)^d}^\sob\right)^\frac{\conv+1-\convs}{\sob}| \b u-\b w|_{W^{1,m}(\Omega)^d}^{\convs-1}| \b v|_{W^{1,m}(\Omega)^d}.
    \end{aligned}
  \end{equation}
  
  We next estimate $\mathcal T_4$, which provides a paradigm for the remaining terms.
  Inserting $\pm\big(|\b u|^{s-2}|\b w|^{-1}\b u \otimes \b w + |\b w|^{s-2}|\b u|^{-1}\b w \otimes \b u + |\b u|^{s-3}\b u \otimes \b u\big)$, rearranging the terms, and using a triangle inequality yields
  \begin{equation}\label{eq:c:holder.cont:1}
    \begin{aligned}
      &|\b w|^{s-1}\left||\b u|^{-2}\b u\otimes \b u-|\b w|^{-2}\b w\otimes \b w\right|_{d\times d}
      \\
      &\quad
      \le \left| |\b w|^{-1}\left(|\b u|^{s-2}\b u-|\b w|^{s-2}\b w\right)\otimes \b w\right|_{d\times d}
      + \left||\b u|^{-1}\!\left(|\b u|^{\conv-2}\b u-|\b w|^{\conv-2}\b w\right)\otimes \b u\right|_{d\times d}
      \\
      &\qquad
      + \left||\b u|^{-1}\!\left(|\b w|^{\conv-2}|\b w|-|\b u|^{\conv-2}|\b u|\right)\left(|\b u|^{-1}\b u+|\b w|^{-1}\b w\right)\otimes \b u\right|_{d\times d}
      \\
      &\quad
      \le 4\left||\b u|^{s-2}\b u-|\b w|^{s-2}\b w\right| 
      \le 4C_\cst{hc} \left(|\b u|^\conv+|\b w|^\conv\right)^\frac{\conv-\convs}{\conv}|\b u - \b w|^{\convs-1},
    \end{aligned}
  \end{equation}
  where the last line is obtained using Cauchy-Schwarz and triangle inequalities along with the H\"older continuity property \eqref{eq:s.power:holder.cont}.
  Thus, inserting $\pm \frac{\b v \cdot \b u}{|\b u|^{2}}\tri{\convec(\cdot,\b w) }{\GRAD}{\b w}{\b u}$ and using a triangle inequality leads to
  \begin{equation}\label{eq:c:holder.cont:T4}
    \begin{aligned}
      |\mathcal T_4| &\le \int_\Omega \left(|\convec(\cdot,\b u)-\convec(\cdot,\b w)|+|\convec(\cdot,\b w)|\left||\b u|^{-2}\b u\otimes \b u-|\b w|^{-2}\b w\otimes \b w\right|_{d\times d}\right) |\GRAD\b w|_{d\times d}|\b v| \\
      &\le (1+4C_\cst{hc})\chi_\cst{hc}\int_\Omega \left(|\b u|^\conv+|\b w|^\conv\right)^\frac{\conv-\convs}{\conv}|\b u - \b w|^{\convs-1}|\GRAD\b w|_{d\times d}|\b v|  \\
      &\le (1+4C_\cst{hc})\chi_\cst{hc}\left(\|\b u\|_{L^{\conv m'}(\Omega)^d}^\conv+\|\b w\|_{L^{\conv m'}(\Omega)^d}^\conv\right)^\frac{\conv-\convs}{\conv}\|\b u-\b w\|_{L^{\conv m'}(\Omega)^d}^{\convs-1}| \b w|_{W^{1,m}(\Omega)^d}\|\b v \|_{L^{\conv m'}(\Omega)^d}\\
      &\lesssim\chi_\cst{hc}\left(| \b u|_{W^{1,r}(\Omega)^d}^\sob+| \b w|_{W^{1,r}(\Omega)^d}^\sob\right)^\frac{\conv+1-\convs}{\sob}| \b u-\b w|_{W^{1,m}(\Omega)^d}^{\convs-1}| \b v|_{W^{1,m}(\Omega)^d},
    \end{aligned}
  \end{equation}
  where we have used the H\"older continuity property \eqref{eq:ass:conv:holder.cont} of $\convec$ together with \eqref{eq:c:holder.cont:1} in the second line, the $(1;\frac{\conv m'}{\conv-\convs},\frac{\conv m'}{\convs-1},m,\conv m')$-H\"older inequality \eqref{eq:holder} in the third line, and the Sobolev embedding \eqref{eq:sob:emb} (since, as showed above, $\conv m' \le m^* \le r^*$) together with the $(1;\frac{r}{m},\frac{r}{r-m})$-H\"older inequality \eqref{eq:holder} (since $m \le r$) and the inequality \cite[Eq. (36)]{Botti.Castanon-Quiroz.ea:20} to conclude.
  Similar arguments give for the second and sixth terms an analogous bound:
  \begin{equation}\label{eq:c:holder.cont:T2+T6}
    |\mathcal T_2| + |\mathcal T_6|
    \lesssim\chi_\cst{hc}\left(| \b u|_{W^{1,r}(\Omega)^d}^\sob+| \b w|_{W^{1,r}(\Omega)^d}^\sob\right)^\frac{\conv+1-\convs}{\sob}| \b u-\b w|_{W^{1,m}(\Omega)^d}^{\convs-1}| \b v|_{W^{1,m}(\Omega)^d}.
  \end{equation}
  Plugging \eqref{eq:c:holder.cont:T1+T3}, \eqref{eq:c:holder.cont:T5}, \eqref{eq:c:holder.cont:T4}, and \eqref{eq:c:holder.cont:T2+T6} into \eqref{eq:c:holder.cont:decomp} gives \eqref{eq:c:holder.cont}.
\end{proof}

\begin{remark}[Role of condition \eqref{eq:cont.c:s}]\label{rem:cont.c:s}
  In the proof of Lemma \ref{lem:c:holder.cont}, condition \eqref{eq:cont.c:s} is used (along with $m\le r$)  to control $L^{sm'}$-norms of functions in $\b U$ with $W^{1,m}$-(semi)norms.
\end{remark}


\section{Discrete setting}\label{sec:discrete.setting}

In this section we establish the discrete setting.

\subsection{Mesh and notation for inequalities up to a multiplicative constant}

Let $\mathcal H \subset (0,\infty)$ be a countable set of meshsizes having $0$ as its unique accumulation point.
Given a set $X\subset\R^d$, we denote by $h_X\coloneq\sup_{(\b x,\b y)\in X^2}|\b x-\b y|$ its diameter.
For all $h \in \mathcal H$, we define a mesh as a couple $\mathcal M_h\coloneq(\T_h,\F_h)$ where $\T_h$ is a finite collection of polyhedral elements such that $h=\max_{T\in\T_h}h_T$, while $\F_h$ is a finite collection of planar faces.
Notice that, here and in what follows, we use the three-dimensional nomenclature also when $d=2$, i.e., we speak of polyhedra and faces rather than polygons and edges.
It is assumed henceforth that the mesh $\mathcal M_h$ matches the geometrical requirements detailed in \cite[Definition 1.7]{Di-Pietro.Droniou:20}.
In order to have the boundedness property \eqref{eq:I:boundedness} below for the interpolator, we additionally assume that the mesh elements are star-shaped with respect to every point of a ball of radius uniformly comparable to the element diameter.
Boundary faces lying on $\partial\Omega$ and internal faces contained in
$\Omega$ are collected in the sets $\F_h^{\rm b}$ and $\F_h^{\rm i}$, respectively.
For every mesh element $T\in\T_h$, we denote by $\F_T$ the subset of $\F_h$ containing the faces that lie on the boundary $\partial T$ of $T$. For every face $F \in \F_h$, we denote by $\T_F$ the subset of $\T_h$ containing the one (if $F\in\F_h^{\rm b}$) or two (if $F\in\F_h^{\rm i}$) elements on whose boundary $F$ lies.
For each mesh element $T\in\T_h$ and face $F\in\F_T$, $\b n_{TF}$ denotes the (constant) unit vector normal to $F$ pointing out of $T$.

Our focus is on the $h$-convergence analysis, so we assume that the mesh sequence $(\mathcal M_h)_{h\in\mathcal H}$ is regular in the sense of \cite[Definition 1.9]{Di-Pietro.Droniou:20}, with regularity parameter uniformly bounded away from zero.
The mesh regularity assumption implies, in particular, that the diameter of a mesh element and those of its faces are comparable uniformly in $h$ and that the number of faces of one element is bounded above by an integer independent of $h$.

To avoid the proliferation of generic constants, we write henceforth $a\lesssim b$ (resp., $a\gtrsim b$) for the inequality $a\le Cb$ (resp., $a\ge Cb$) with real number $C>0$ independent of $h$, of the constants $\delta,\sigma_\cst{hc},\sigma_\cst{hm},\chi_\cst{hc}$ in Assumptions \ref{ass:stress} and \ref{ass:conv}, and, for local inequalities, of the mesh element or face on which the inequality holds.
We also write $a\simeq b$ to mean $a\lesssim b$ and $b\lesssim a$.
The dependencies of the hidden constants are further specified when needed.

\subsection{Projectors and broken spaces}
  
Given $X \in \T_h \cup \F_h$ and $l \in \N$, we denote by $\Poly^l(X)$ the space spanned by the restriction to $X$ of scalar-valued, $d$-variate polynomials of total degree $\le l$.
The local $L^2$-orthogonal projector $\proj{X}{l} : L^{1}(X) \to \Poly^l(X)$ is defined such that, for all $v \in L^{1}(X)$,
\begin{equation}\label{eq:proj}
  \displaystyle\int_X (\proj{X}{l} v-v) w = 0 \qquad \forall w \in  \Poly^{l}(X).
\end{equation}
When applied to vector-valued fields in $L^1(X)^d$ (resp., tensor-valued fields in $L^1(X)^{d\times d}$), the $L^2$-orthogonal projector mapping on $\Poly^l(X)^d$ (resp., $\Poly^l(X)^{d\times d}$) acts component-wise and is denoted in boldface font.
Let $T\in\T_h$, $m\in [1,\infty]$, $i\in[0,l+1]$, and $j\in[0,i]$.
The following $(i,m,j)$-approximation properties of $\proj{T}{l}$ hold:
For any $v\in W^{i,m}(T)$,
\begin{subequations}\label{eq:proj:app}
\begin{equation}\label{eq:proj:app:T}
  |v-\proj{T}{l}v|_{W^{j,m}(T)} \lesssim h_T^{i-j}|v|_{W^{i,m}(T)}.
\end{equation}
If, additionally, $i\ge j+1$, we have the following $(i,m,j)$-trace approximation property:
\begin{equation}\label{eq:proj:app:F}
    |v-\proj{T}{l}v|_{W^{j,m}(\partial T)}\lesssim h_T^{i-j-\frac{1}{m}}|v|_{W^{i,m}(T)}.
\end{equation}
\end{subequations}
The hidden constants in \eqref{eq:proj:app} are independent of $h$ and $T$, but possibly depend on $d$, the mesh regularity parameter, $l$, $i$, and $m$.
The approximation properties \eqref{eq:proj:app} are proved for integer $i$ and $j$ in \cite[Appendix A.2]{Di-Pietro.Droniou:17} (see also \cite[Theorem 1.45]{Di-Pietro.Droniou:20}), and can be extended to non-integer values using standard interpolation techniques (see, e.g., \cite[Theorem 5.1]{Lions.Magenes:72}).

At the global level, we define the broken polynomial space $\Poly^l(\T_h)$ spanned by functions in $L^1(\Omega)$ whose restriction to each mesh element $T\in\T_h$ lies in $\Poly^l(T)$, and we define the global $L^2$-orthogonal projector $\proj{h}{l} : L^{1}(\Omega) \to \Poly^l(\T_h)$ such that, for all $v \in L^{1}(\Omega)$ and all $T \in \T_h$,
\[
(\proj{h}{l} v)\res{T} \coloneq \proj{T}{l} v\res{T}.
\]
Broken polynomial spaces are subspaces of the broken Sobolev spaces
\[
W^{n,m}(\T_h)\coloneq\left\{ v\in L^{m}(\Omega)\ : \ v\res{T}\in W^{n,m}(T)\quad\forall T\in\T_h\right\},
\]
which we endow with the usual broken seminorm $|\cdot|_{W^{n,m}(\T_h)}$; see, e.g., \cite[Section 1.2.5]{Di-Pietro.Ern:12} for further details.
As a consequence of the $(l+1,\cdot,\cdot)$-approximation properties \eqref{eq:proj:app:T}, for all $\phi \in C^\infty_c(\Omega)$,
\begin{equation}\label{eq:proj.seq.cons}
    \proj{h}{l} \phi \CV{h}{0}{} \phi \text{ strongly in }W^{[0,l],\lbrack 1,\infty)}(\T_h).
\end{equation}
We define the broken gradient operator $\brkGRAD : W^{1,1}(\T_h) \rightarrow L^1(\Omega)^d$ such that, for all $v \in W^{1,1}(\T_h)$ and all $T \in \T_h$, $(\brkGRAD v)\res{T} \coloneq \GRAD v\res{T}$. We define similarly the broken gradient acting on vector fields along with its symmetric part $\brkGRADs$, as well as the broken divergence operator $\brkDIV$ acting (row-wise) on tensor fields.
The global $L^2$-orthogonal projector $\PROJ{h}{l}$ mapping vector-valued fields in $L^1(\Omega)^d$ (resp., tensor-valued fields in $L^1(\Omega)^{d\times d}$) on $\Poly^l(\T_h)^d$ (resp., $\Poly^l(\T_h)^{d\times d}$) is obtained applying $\proj{h}{l}$ component-wise.


\section{Discrete problem and main results}\label{sec:discrete.main.results}

\subsection{Discrete spaces and norms}

Let an integer $k\ge 1$ be fixed. The HHO space of discrete velocities is
\[
\bdU{h}{k} \coloneqq \left\{
\bu v_h \coloneq ((\b v_T)_{T \in \T_h},(\b v_F)_{F\in \F_h}) \ : \ \b v_T \in \Poly^k(T)^d\ \ \forall T \in \T_h\ \mbox{ and }\ \b v_F \in \Poly^k(F)^d\ \ \forall F \in \F_h \right\}.
\]
The interpolation operator $\bI{h}{k} : W^{1,1}(\Omega)^d \to  \bdU{h}{k} $ maps a function $\b v \in W^{1,1}(\Omega)^d$ on the vector of polynomials $\bI{h}{k}\b v$ defined as follows:
\[
  \bI{h}{k} \b v \coloneqq ((\PROJ{T}{k} \b v\res{T})_{T \in \T_h},(\PROJ{F}{k} \b v\res{F})_{F \in \F_h}).
  \]
For all $T \in \T_h$, we denote by $\bdU{T}{k}$ and $\bI{T}{k}$ the restrictions of $\bI{h}{k}$ and $\bdU{h}{k}$ to $T$, respectively and, for all $\bu v_h \in \bdU{h}{k}$, we let $\bu v_T \coloneqq (\b v_T,(\b v_F)_{F\in \F_T}) \in \bdU{T}{k}$ denote the vector collecting the polynomial components of $\bu v_h$ attached to $T$ and its faces.
Furthermore, for all $\bu v_h \in \bdU{h}{k}$, we define the broken polynomial field $\b v_h\in\Poly^k(\T_h)^d$ obtained patching element unknowns, that is,
\begin{equation}\label{eq:vh}
  (\b v_h)\res{T} \coloneqq \b v_T\qquad\forall T \in \T_h.
\end{equation}

For any $m \in (1,\infty)$, we define on $\bdU{h}{k}$ the seminorms $\| {\cdot} \|_{1,m,h}$ and $\| {\cdot} \|_{\strain,m,h}$ such that, for all $\bu v_h \in \bdU{h}{k}$,
\begin{subequations}\label{eq:norm.epsilon.sob}
  \begin{equation}\label{eq:norm.epsilon.sob.h}
    \| \bu v_h \|_{\bullet,m,h} \coloneqq \left(\displaystyle\sum_{T \in \T_h}\| \bu v_T \|_{\bullet,m,T}^m\right)^\frac{1}{m}\qquad
    \forall\bullet\in\{1,\strain\},
    \end{equation}
    with, for all $T \in \T_h$,
    \begin{equation}\label{eq:norm.epsilon.sob.T}
      \begin{aligned}
        \| \bu v_T \|_{1,m,T} &\coloneqq \left(\| \GRAD \b v_T \|^m_{L^m(T)^{d\times d}} + \displaystyle\sum_{F \in \F_T} h_F^{1-m} \| \b v_F - \b v_T\|^m_{L^m(F)^d}\right)^\frac{1}{m},
        \\
      \| \bu v_T \|_{\strain,m,T} &\coloneqq \left(\| \GRADs \b v_T \|^m_{L^m(T)^{d\times d}} + \displaystyle\sum_{F \in \F_T} h_F^{1-m} \| \b v_F - \b v_T\|^m_{L^m(F)^d}\right)^\frac{1}{m}.
      \end{aligned}  
    \end{equation}
\end{subequations}
The difference between these seminorms lies in the fact that the symmetric part of the gradient replaces the gradient in $\|{\cdot}\|_{\strain,m,T}$.

The discrete velocity and pressure are sought in the following spaces, which embed, respectively, the homogeneous boundary condition for the velocity and the zero-average constraint for the pressure:
\[
\bdU{h,0}{k} \coloneqq \left\{ \bu v_h = ((\b v_T)_{T \in \T_h},(\b v_F)_{F\in \F_h}) \in \bdU{h}{k} \ : \ \b v_F = \b 0 \quad \forall F \in \Fb \right\},\quad
\dP{h}{k} \coloneqq \Poly^k(\T_h)\cap P.
\]
The following discrete Korn inequality was proved in \cite[Lemma 15]{Botti.Castanon-Quiroz.ea:20}:
  \begin{equation}\label{eq:discrete.Korn}
    \| \b v_h \|_{L^m(\Omega)^d}^m+|\b v_h|_{W^{1,m}(\T_h)^d}^m \lesssim \| \bu v_h \|_{\strain,m,h}^m\qquad
    \forall \bu v_h  \in \bdU{h,0}{k}.
  \end{equation}
A first consequence of \eqref{eq:discrete.Korn} is that $\| {\cdot} \|_{\strain,m,h}$ is a norm on $\bdU{h,0}{k}$.
A second consequence is the following equivalence uniform in $h$:
\begin{equation}\label{eq:norm.equiv}
  \| \bu v_h \|_{\strain,m,h} \simeq \| \bu v_h \|_{1,m,h}\qquad
  \forall \bu v_h  \in \bdU{h,0}{k}.
\end{equation}
The following boundedness property for $\bI{T}{k}$ is proved in \cite[Proposition 6.24]{Di-Pietro.Droniou:20} and requires the star-shaped assumption on the mesh elements:
For all $T \in \T_h$ and all $\b v \in W^{1,\sob}(T)^d$,
\begin{equation}\label{eq:I:boundedness}
  \|\bI{T}{k} \b v\|_{1,\sob,T} \le C_\cst{I}| \b v |_{W^{1,\sob}(T)^d},
\end{equation}
where $C_\cst{I} \ge 1$ depends only on $d$, the mesh regularity parameter, $\sob$, and $k$.

Finally, we recall the following discrete Sobolev embeddings (see \cite[Proposition 5.4]{Di-Pietro.Droniou:17}): For all $m \in [1,\infty)$ such that $m \le \sob^*$,
\begin{equation} \label{eq:discrete.sob.emb} 
  \| \b v_h \|_{L^m(\Omega)^d} \lesssim  \| \bu v_h \|_{1,\sob,h}
\lesssim \| \bu v_h\|_{\strain,\sob,h}
  \qquad\forall \bu v_h  \in \bdU{h,0}{k},
\end{equation}
where the last inequality is a consequence of \eqref{eq:norm.equiv}.

\subsection{Local gradient reconstruction}

For all $T \in \T_h$, we define the local gradient reconstruction $\dgrad{k}{T} : \bdU{T}{k} \to \Poly^{k}(T)^{d\times d}$ such that, for all $\bu v_T \in \bdU{T}{k}$,
\begin{equation}\label{eq:G.T}
  \int_T \dgrad{k}{T} \bu v_T : \b \tau = \int_T \GRAD \b v_T : \b \tau + \sum_{F \in \mathcal F_T} \int_F (\b v_F-\b v_T)\cdot (\b \tau \b n_{TF})\qquad \forall \b \tau \in  \Poly^{k}(T)^{d\times d}.
\end{equation}
  A global gradient reconstruction $\dgrad{k}{h} : \bdU{h}{k} \to \Poly^{k}(\mathcal T_h)^{d\times d}$ is obtained patching the local contributions, that is, for all $\bu v_h \in \bdU{h}{k}$, we set
\begin{equation}\label{eq:Gh}
  (\dgrad{k}{h} \bu v_h)\res{T} \coloneq \dgrad{k}{T} \bu v_T \qquad \forall T\in\mathcal T_h.
\end{equation}
By construction, the following commutation property holds (see \cite[Eq. (23)]{Di-Pietro.Krell:18}):
For all $T\in\T_h$ and all $\b v \in  W^{1,1}(T)^d$,
\begin{equation}\label{eq:G.T:proj}
\dgrad{k}{T} (\bI{T}{k}\b v) =\PROJ{T}{k}(\GRAD \b v).
\end{equation}
Combined with the $(k+1,\sob,0)$-approximation properties \eqref{eq:proj:app} of $\PROJ{T}{k}$, this gives, for all $\b v \in  W^{k+2,\sob}(T)^d$,
\begin{equation}\label{eq:Gh:consistency}
  \| \dgrad{k}{T}(\bI{T}{k}\b v) -\GRAD \b v\|_{L^\sob(T)^{d\times d}} + \left(\sum_{F \in \F_T} h_F \| \dgrad{k}{T}(\bI{T}{k}\b v) -\GRAD \b v\|_{L^\sob(F)^{d\times d}}^\sob\right)^\frac{1}{\sob} \lesssim h_T^{k+1} |\b v|_{W^{k+2,\sob}(T)^d}.
\end{equation}
As a consequence, for all $\b\phi \in C^\infty_c(\Omega)^d$,
\begin{equation}\label{eq:GT.seq.cons}
  \text{
    $\dgrad{k}{h}(\bI{h}{k} \b\phi) \CV{h}{0}{} \GRAD \b\phi$ strongly in $L^{\lbrack 1,\infty)}(\Omega)^{d\times d}$.
  }
\end{equation}
Combining \cite[Proposition 1.1]{Di-Pietro.Krell:18} with the local Lebesgue embeddings of \cite[Lemma 1.25]{Di-Pietro.Droniou:20} (see also \cite[Lemma 5.1]{Di-Pietro.Droniou:17}) gives
\begin{equation}\label{eq:Gh:boundedness}
  \| \dgrad{k}{T} \bu v_T \|_{L^\sob(T)^{d\times d}} \lesssim \| \bu v_T \|_{1,\sob,T}\qquad
  \forall \bu v_T \in \bdU{T}{k}.
\end{equation}

\subsection{Convective term}

The convective term is discretized through the function $\cst{c}_h : \bdU{h}{k} \times \bdU{h}{k} \to \R$ such that, for all $(\bu w_h,\bu  v_h) \in \bdU{h}{k} \times \bdU{h}{k}$, 
\begin{equation}\label{eq:ch}
\begin{aligned}
  \cst{c}_h(\bu w_h,\bu v_h) &\coloneq \frac{1}{\conv}\int_\Omega \tri{\convec(\cdot,\b w_h)}{\dgrad{k}{h}}{\bu w_h}{\b v_h}+\frac{\conv-2}{\conv}\int_\Omega \frac{\b v_h \cdot \b w_h}{|\b w_h|^2}\tri{\convec(\cdot,\b w_h)}{\dgrad{k}{h}}{\bu w_h}{\b w_h} \\
  & \quad - \frac{1}{\conv'}\int_\Omega \tri{\convec(\cdot,\b w_h)}{\dgrad{k}{h}}{\bu v_h}{\b w_h}.
  \end{aligned}
\end{equation}
  This expression is obtained replacing in \eqref{eq:weak:c} the continuous gradient by $\dgrad{k}{h}$ and the functions by their broken polynomial counterparts obtained according to \eqref{eq:vh}.

\begin{remark}[Comparison with \cite{Di-Pietro.Krell:18}]\label{rem:standard.discrete.convective} 
  For the standard convection law corresponding to $s=2$ and $\convec(\cdot,\b w) \equiv \b w$,
  the convective function \eqref{eq:ch} becomes
  \[
  \cst{c}_h(\bu w_h,\bu v_h)
  = \frac{1}{2}\int_\Omega \tri{\b w_h}{\dgrad{k}{h}}{\bu w_h}{\b v_h}
  - \frac{1}{2}\int_\Omega \tri{\b w_h}{\dgrad{k}{h}}{\bu v_h}{\b w_h}.
  \]
  This expression differs from the one originally proposed in \cite[Eq. (32)]{Di-Pietro.Krell:18} in that a gradient reconstruction of degree $k$ instead of $2k$ (noted $\dgrad{2k}{h}$ therein and defined taking $\Poly^{2k}(T)^{d\times d}$ instead of $\Poly^{k}(T)^{d\times d}$ both as a codomain for $\dgrad{2k}{T}$ and as a test space in \eqref{eq:G.T}) is used.
  The latter choice leads to a simpler expression in the standard case since, for all $\bu v_h,\bu w_h\in\bu U_h^k$ and all $T\in\T_h$, the quantity $\b v_T\otimes\convec(\cdot,\b w_T)$ is a polynomial of degree $2k$ inside $T$, and $\dgrad{2k}{T}$ can thus be expanded according to its definition; cf. \cite[Eq. (33)]{Di-Pietro.Krell:18}.
  This is no longer the case when considering general convection laws, for which the quantity $\b v_T\otimes\convec(\cdot,\b w_T)$ is possibly non-polynomial inside $T$   (hence we cannot use its degree to design a discrete gradient allowing to mimic the trick of \cite[Eq. (33)]{Di-Pietro.Krell:18}).
    Additionally, the consistency property \eqref{eq:Gh:consistency} of $\dgrad{k}{T}$ is not valid for $\dgrad{l}{T}$ with $l > k$.
    As a matter of fact, it is shown in \cite[Proposition 1]{Di-Pietro.Krell:18} that one order of convergence is lost in this case, which would result in a degradation of the error estimates if $\dgrad{2k}{T}$ were used in place of $\dgrad{k}{T}$ in the expression of $\cst{c}_T$.%
\end{remark}

\begin{lemma}[Properties of $\cst{c}_h$] \label{lem:properties:ch}
  Under Assumption \ref{ass:conv}, the following properties for $\cst{c}_h$ hold:
  \begin{enumerate}[left=0pt]
  \item \emph{Non-dissipativity.}
    For all $\bu w_h \in \bdU{h}{k}$,
    \begin{equation}\label{eq:ch:non-dissip}
      \cst{c}_h(\bu w_h,\bu w_h) = 0.
    \end{equation}
  \item \emph{H\"older continuity.} 
    For $m$ and $\conv$ as in Lemma \ref{lem:c:holder.cont}
    (i.e., $m\in [1,\sob]$ and $\conv \le \frac{m^*}{m'}$) and all $\bu u_h, \bu v_h, \bu w_h \in \bdU{h,0}{k}$, it holds
    \begin{multline}\label{eq:ch:holder.cont}
      \hspace{-3mm}\left|
      \cst{c}_h(\bu u_h,\bu v_h)-\cst{c}_h(\bu w_h,\bu v_h)
      \right|
       \le C_{\cst{dc},m}\chi_\cst{hc}\left(\|\bu u_h\|_{\strain,\sob,h}^\sob+\|\bu w_h\|_{\strain,\sob,h}^\sob\right)^\frac{\conv+1-\convs}{\sob}\|\bu u_h - \bu w_h\|_{\strain,m,h}^{\convs-1}\|\bu v_h \|_{\strain,m,h},
    \end{multline}
    where $C_{\cst{dc},m} > 0$ is independent of $h$.
  \item \emph{Consistency.} If
    \begin{equation}\label{eq:cons.ch:s}
      \conv \le \frac{\sob^*}{\sob'},
    \end{equation}
    then, for all $\b w \in \b U \cap W^{k+2,\sob}(\T_h)^d \cap W^{k+1,s\sob'}(\T_h)^d$ (so that, in particular, $\b w\in W^{1,s\sob'}(\Omega)^d$) such that $\GRAD \cdot \b w = 0$, it holds
    \begin{multline}\label{eq:ch:consistency}
      \sup_{\bu v_h \in \bdU{h,0}{k},\| \bu v_h \|_{\strain,\sob,h} = 1} \left|\int_\Omega \tri{\b w}{\GRAD}{\convec(\cdot,\b w)}{\b v_h} - \cst{c}_h(\bI{h}{k} \b w,\bu v_h)\right|
      \\
      \lesssim h^{k+1}\left[
        |\b w|_{W^{k+1,s\sob'}(\T_h)^d}^s
        {+} \left(
        |\b w|_{W^{1,s\sob'}(\Omega)^d}^s {+} |\b w|_{W^{1,\sob}(\Omega)^d}^s
        \right)^\frac{\conv-1}{s}\left(
        |\b w|_{W^{k+1,s\sob'}(\T_h)^d}{+}|\b w|_{W^{k+2,\sob}(\T_h)^d}
        \right)
        \right]
        \\
      +h^{(k+1)(\convs-1)}\left(|\b w|_{W^{1,s\sob'}(\Omega)^d}^s {+} |\b w|_{W^{1,\sob}(\Omega)^d}^s\right)^\frac{\conv+1-\convs}{s}|\b w|_{W^{k+1,s\sob'}(\T_h)^d}^{\convs-1}.
    \end{multline}
  \item \emph{Sequential consistency.}
    Let $(\bu v_h)_{h\in\mathcal H}$  denote a bounded sequence of $(\bdU{h,0}{k},\|{\cdot}\|_{\strain,\sob,h})_{h \in \mathcal H}$ such that $\b v_h \CV{h}{0} \b v \in \b U$ strongly in $L^{[1,\sob^*)}(\Omega)^d$, and $\dgrad{k}{h} \bu v_h \CV{h}{0} \GRAD \b v$ weakly in $L^{\sob}(\Omega)^{d\times d}$, and assume
    \begin{equation}\label{eq:cons.ch:s:strict}
      \conv < \frac{\sob^*}{\sob'}.
    \end{equation}    
     Then, for all $\b\phi \in C_\cst{c}^\infty(\Omega,\mathbb R^d)$, it holds, up to a subsequence,
      \begin{equation}\label{eq:ch:sequential.consistency}
        \cst{c}_h(\bu v_h, \bI{h}{k}\b\phi) \CV{h}{0} c(\b v,\b\phi).
      \end{equation}
  \end{enumerate}
\end{lemma}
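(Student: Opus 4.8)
\textbf{Proof proposal for Lemma~\ref{lem:properties:ch}.}
The plan is to treat the four properties in turn, reusing at the discrete level the continuous arguments already developed for $c$, with the continuous gradient systematically replaced by the reconstruction $\dgrad{k}{h}$ and functions replaced by their broken polynomial counterparts.

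First, \emph{non-dissipativity} \eqref{eq:ch:non-dissip} is immediate: setting $\bu v_h=\bu w_h$ in \eqref{eq:ch}, the three integrals combine exactly as in the passage leading to \eqref{eq:c:zero}, since \eqref{eq:ass:conv:non.dissip:1}--\eqref{eq:ass:conv:non.dissip:2} are pointwise identities in $\b w_h$ and no integration by parts is involved — the algebraic cancellation $\frac1\conv+\frac{\conv-2}{\conv}-\frac{1}{\conv'}=0$ against $\tri{\convec(\cdot,\b w_h)}{\dgrad{k}{h}}{\bu w_h}{\b w_h}$ carries over verbatim.

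Second, for the \emph{H\"older continuity} \eqref{eq:ch:holder.cont} I would mimic the proof of Lemma~\ref{lem:c:holder.cont} line by line: decompose $\cst{c}_h(\bu u_h,\bu v_h)-\cst{c}_h(\bu w_h,\bu v_h)$ into six terms $\mathcal T_1,\ldots,\mathcal T_6$ exactly as in \eqref{eq:c:holder.cont:decomp}, with $\dgrad{k}{h}$ in place of $\GRAD$. Each term is bounded using the H\"older continuity \eqref{eq:ass:conv:holder.cont} of $\convec$, a H\"older inequality in Lebesgue spaces, the gradient boundedness \eqref{eq:Gh:boundedness} (to replace $\|\dgrad{k}{h}\bu w_h\|_{L^\sob}$ by $\|\bu w_h\|_{1,\sob,h}$), the discrete Sobolev embeddings \eqref{eq:discrete.sob.emb} (valid because $\conv m'\le m^*\le\sob^*$ under the hypothesis, so $L^{\conv m'}$-norms of $\b v_h$ are controlled by $\|\bu v_h\|_{1,\sob,h}$), and the norm equivalence \eqref{eq:norm.equiv}; the $|\b u_h-\b w_h|_{W^{1,m}(\T_h)}$ and $|\b v_h|_{W^{1,m}(\T_h)}$ factors are absorbed into $\|{\cdot}\|_{\strain,m,h}$ via \eqref{eq:discrete.Korn}. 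One subtlety: the bound on $\mathcal T_4$ (and $\mathcal T_2$, $\mathcal T_6$) requires the pointwise tensor estimate \eqref{eq:c:holder.cont:1}, which is purely algebraic in the values $\b u_h(\b x)$, $\b w_h(\b x)$ and hence applies unchanged. The $(x+y)^r\lesssim x^r+y^r$ trick and monotonicity of $x\mapsto x^{\conv-1}$ conclude as before.

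Third and fourth are the delicate parts, and I expect the \emph{consistency} estimate \eqref{eq:ch:consistency} to be the main obstacle. The strategy is to compare $\int_\Omega\tri{\b w}{\GRAD}{\convec(\cdot,\b w)}{\b v_h}$ with $\cst{c}_h(\bI{h}{k}\b w,\bu v_h)$ by inserting the continuous functional $c(\b w,\b v_h)$ as an intermediate quantity — using that $c(\b w,\b v_h)$ equals $\int_\Omega\tri{\b w}{\GRAD}{\convec(\cdot,\b w)}{\b v_h}$ up to boundary and divergence terms that vanish under $\GRAD\cdot\b w=0$ and $\b w|_{\partial\Omega}=\b 0$ (this is exactly \eqref{eq:c:weak.formulation}, now needing $\b v_h\in W^{1,\sob}(\T_h)^d$ only elementwise, so one must also check the jump terms across internal faces cancel, which they do because $\convec(\cdot,\b w)\otimes\b w$ is single-valued). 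It then remains to bound $c(\b w,\b v_h)-\cst{c}_h(\bI{h}{k}\b w,\bu v_h)$: term by term this is a sum of integrals of $\convec(\cdot,\b w)$ (or its Carreau-type nonlinear combinations) against the consistency error $\GRAD\b w-\dgrad{k}{h}(\bI{h}{k}\b w)$ of the gradient reconstruction, plus terms where $\b w$ inside $\convec$ is replaced by its element projection $\PROJ{T}{k}\b w$. For the first type one uses \eqref{eq:Gh:consistency} — giving the $h^{k+1}$ factor — combined with H\"older, $\|\convec(\cdot,\b w)\|_{L^{\sob'}}\lesssim\|\b w\|_{L^{s\sob'}}^{s-1}$ (from \eqref{eq:ass:conv:holder.cont} with $\b v=\b 0$), and the regularity $\b w\in W^{1,s\sob'}(\Omega)^d$; condition \eqref{eq:cons.ch:s} guarantees the Sobolev embeddings needed to bound the remaining $\b v_h$-factor. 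For the second type one uses the approximation property \eqref{eq:proj:app:T} of $\PROJ{T}{k}$ at order $k+1$ on $\b w$ together with the H\"older continuity of $\convec$; the $(\convs-1)$-H\"older exponent in \eqref{eq:ass:conv:holder.cont} is what produces the second, lower-order contribution $h^{(k+1)(\convs-1)}$ in the right-hand side. Careful bookkeeping of which norm of $\b w$ multiplies which power of $h$ — matching the stated right-hand side — is the part that demands the most attention.

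Finally, for \emph{sequential consistency} \eqref{eq:ch:sequential.consistency} the plan is the standard compactness argument. Write $\cst{c}_h(\bu v_h,\bI{h}{k}\b\phi)$ using \eqref{eq:ch} and the commutation/consistency \eqref{eq:GT.seq.cons}, which gives $\dgrad{k}{h}(\bI{h}{k}\b\phi)\to\GRAD\b\phi$ strongly in every $L^q$; meanwhile $\dgrad{k}{h}\bu v_h\rightharpoonup\GRAD\b v$ weakly in $L^\sob$ and $\b v_h\to\b v$ strongly in $L^{[1,\sob^*)}$. The strict inequality \eqref{eq:cons.ch:s:strict} is what upgrades the embedding exponent $\conv\sob'$ strictly below $\sob^*$, so that $\b v_h\to\b v$ and $\convec(\cdot,\b v_h)\to\convec(\cdot,\b v)$ strongly in the Lebesgue space dual to the one in which $\dgrad{k}{h}\bu v_h$ converges only weakly — allowing the weak-times-strong products to pass to the limit. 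Extracting a subsequence along which $\b v_h\to\b v$ a.e.\ (hence $\convec(\cdot,\b v_h)\to\convec(\cdot,\b v)$ a.e.\ by continuity of $\convec$ and \eqref{eq:ass:conv:holder.cont}) plus uniform integrability gives the strong convergence of the nonlinear factors; combining with the weak convergence of $\dgrad{k}{h}\bu v_h$ and the strong convergence of $\dgrad{k}{h}(\bI{h}{k}\b\phi)$ and of $\b v_h$, each of the three integrals in \eqref{eq:ch} converges to the corresponding integral in \eqref{eq:weak:c}, which is $c(\b v,\b\phi)$.
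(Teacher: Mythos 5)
Your plan for items 1, 2, and 4 coincides with the paper's proof: non-dissipativity is immediate from the definition; Hölder continuity is obtained by mirroring the continuous decomposition \eqref{eq:c:holder.cont:decomp} with $\dgrad{k}{h}$ replacing $\GRAD$ and the discrete Sobolev embeddings \eqref{eq:discrete.sob.emb} (plus \eqref{eq:norm.equiv}, \eqref{eq:Gh:boundedness}, \eqref{eq:discrete.Korn}) replacing their continuous counterparts; and sequential consistency follows the standard weak-strong product argument once strong $L^{s'r'}$-convergence of $\convec(\cdot,\b v_h)$ and of $|\b v_h|^{-2}(\b v_h\otimes\b v_h)\convec(\cdot,\b v_h)$ is established (the paper gets this quantitatively via \eqref{eq:ass:conv:holder.cont} and the tensor estimate \eqref{eq:c:holder.cont:1}, rather than through a.e.\ convergence plus uniform integrability as you suggest, but both routes work under \eqref{eq:cons.ch:s:strict}).

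For the consistency estimate \eqref{eq:ch:consistency}, your strategy — element-by-element integration by parts, insertion of an intermediate quantity, and term-by-term bounding via \eqref{eq:Gh:consistency}, the projection approximation properties, and \eqref{eq:ass:conv:holder.cont} — is indeed the paper's strategy, and you correctly identify the mechanism producing the two rates $h^{k+1}$ and $h^{(k+1)(\convs-1)}$. However, your sketch omits one structurally essential contribution. When the third term of $\cst{c}_h(\bI{h}{k}\b w,\bu v_h)$ is expanded using the definition \eqref{eq:G.T} of $\dgrad{k}{T}$, one must test against $\PROJ{T}{k}\big(\convec(\cdot,\b{\hat w}_T)\otimes\b{\hat w}_T\big)$ because $\dgrad{k}{T}$ is only defined against $\Poly^k(T)^{d\times d}$; the $\PROJ{T}{k}$ drops out in the interior term (where it meets the polynomial $\GRAD\b v_T$) but \emph{not} in the face term, leaving a consistency contribution of the form
\[
\sum_{T\in\T_h}\sum_{F\in\F_T}\int_F\Big(\big(\convec(\cdot,\b{\hat w}_T)\otimes\b{\hat w}_T-\PROJ{T}{k}(\convec(\cdot,\b{\hat w}_T)\otimes\b{\hat w}_T)\big)\b n_{TF}\Big)\cdot(\b v_F-\b v_T),
\]
which the paper isolates as $\mathcal T_9$ and bounds via the $(k+1,\sob',0)$-trace approximation of $\PROJ{T}{k}$ and \eqref{eq:ass:conv:holder.cont}. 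This term is precisely the price paid for general convection laws: $\convec(\cdot,\b{\hat w}_T)\otimes\b{\hat w}_T$ is no longer polynomial, so the trick of \cite[Eq.~(33)]{Di-Pietro.Krell:18} (which relies on using a degree-$2k$ gradient so that the projection is exact) is unavailable — this is the whole point of Remark \ref{rem:standard.discrete.convective}. Your phrase ``terms where $\b w$ inside $\convec$ is replaced by its element projection'' captures the $\convec(\cdot,\b w)-\convec(\cdot,\b{\hat w}_h)$ and $\b w-\b{\hat w}_h$ contributions, but not this projection error of the nonlinear tensor product itself. Beyond that, the remaining bookkeeping and estimates you describe match the paper's nine-term decomposition and its use of \eqref{eq:holder}, \eqref{eq:proj:app}, \eqref{eq:sob:emb}, \eqref{eq:discrete.sob.emb}, and \eqref{eq:Gh:consistency}.
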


\begin{proof}
  The non-dissipativity \eqref{eq:ch:non-dissip} of $\cst{c}_h$ is an immediate consequence of its definition \eqref{eq:ch}.
  The proof of the H\"older continuity \eqref{eq:ch:holder.cont} is analogous to that of the corresponding property \eqref{eq:c:holder.cont} for $c$, replacing the relevant continuous Sobolev embeddings (see Remark \ref{rem:cont.c:s} on the role of the condition $s\le\frac{m^*}{m'}$) with their discrete counterpart \eqref{eq:discrete.sob.emb}, and leveraging the norm equivalence \eqref{eq:norm.equiv}.
  Properties \eqref{eq:ch:consistency} and \eqref{eq:ch:sequential.consistency} are proved in Section \ref{sec:ch:consistency} below.
\end{proof}

\subsection{Viscous term}

For all $T \in \T_h$, we define the local symmetric gradient reconstruction $\dgrads{k}{T} : \bdU{T}{k} \to \Poly^{k}(T,\Ms{d})$ by setting, for all $\bu v_T \in \bdU{T}{k}$,
\begin{equation}\label{eq:GST}
  \dgrads{k}{T}\bu v_T \coloneq \frac12\left[\dgrad{k}{T}\bu v_T + (\dgrad{k}{T}\bu v_T)^\top\right].
\end{equation}
Similarly, the global symmetric gradient reconstruction $\dgrads{k}{h} : \bdU{h}{k} \to \Poly^{k}(\T_h,\Ms{d})$ is obtained setting, for all $\bu v_h \in \bdU{h}{k}$,
\begin{equation}\label{eq:Gsh}
  \dgrads{k}{h} \bu v_h \coloneq \frac12\left[\dgrad{k}{h}\bu v_h + (\dgrad{k}{h}\bu v_h)^\top\right].
\end{equation}

The discrete counterpart of the function $a$ defined in \eqref{eq:weak:ab} is the function $\cst{a}_h : \bdU{h}{k} \times \bdU{h}{k} \to \R$ such that, for all $\bu v_h,\bu w_h \in \bdU{h}{k}$,
\begin{equation}\label{eq:ah}
  \cst{a}_h(\bu w_h, \bu v_h) \coloneqq \displaystyle\int_\Omega \stress(\cdot,\dgrads{k}{h} \bu w_h): \dgrads{k}{h} \bu v_h + \cst{s}_h(\bu w_h,\bu v_h).
\end{equation}
Taking inspiration from \cite{Botti.Castanon-Quiroz.ea:20} and \cite{Di-Pietro.Droniou.ea:21}, we take the stabilization function $\cst{s}_h : \bdU{h}{k} \times \bdU{h}{k} \to \R$ such that
\begin{equation}\label{eq:sh}
  \cst{s}_h(\bu w_h,\bu v_h) \coloneqq \frac{\sigma_\cst{hc}+\sigma_\cst{hm}}{2}\sum_{T\in\T_h} h_T\int_{\partial T} \left(\delta^r+|\bdfbres{k}{\partial T} \bu w_T|^r\right)^\frac{\sob-2}{r}\bdfbres{k}{\partial T} \bu w_T \cdot \bdfbres{k}{\partial T} \bu v_T,
\end{equation}
where, for all $T \in \T_h$, the boundary residual operator $\bdfbres{k}{\partial T} : \bdU{T}{k} \to L^r(\partial T)^d$ is such that, for all $\bu v_T \in \bdU{T}{k}$,
\[
(\bdfbres{k}{\partial T} \bu v_T)\res{F} =
\frac{1}{h_T}\left[
  \PROJ{F}{k}(\bdrec{k+1}{T} \bu v_T-\b v_F)-\PROJ{T}{k}(\bdrec{k+1}{T} \bu v_T-\b v_T)
  \right]
\qquad\forall F\in\F_T,
\]
with $\bdrec{k+1}{T} : \bdU{T}{k} \to \Poly^{k+1}(T)^d$ velocity reconstruction consistent for polynomials of degree $\le k+1$ (see \cite[Section 4.1.3]{Botti.Castanon-Quiroz.ea:20} for one possible definition).
With this choice, it holds (see, e.g., \cite[Lemma 8]{Botti.Castanon-Quiroz.ea:20}):
\begin{equation}\label{eq:drec:polynomial.consistency}
  \bdfbres{k}{\partial T}(\bI{T}{k}\b v) = \b 0\qquad\forall\b v\in\Poly^{k+1}(T)^d.
\end{equation}
We define the corresponding boundary residual seminorm $|{\cdot}|_{\sob,h}$ such that, for all $\bu v_h \in \bdU{h}{k}$,
\begin{equation}\label{eq:bound.res.seminorm}
|\bu v_h|_{\sob,h} \coloneq \left(\sum_{T\in\T_h}h_T\Vert \bdfbres{k}{\partial T}\bu v_{T} \Vert_{L^r(\partial T)^d}^r\right)^\frac{1}{r}.
\end{equation}
For future use, we note the following local uniform seminorm equivalence:
\begin{equation}\label{eq:bound.res:stability.boundedness:local}
  \forall T\in\T_h\qquad
  \| \dgrad{k}{T}\bu v_T \|_{L^r(T)^{d\times d}}^r
  + h_T\Vert \bdfbres{k}{\partial T}\bu v_{T} \Vert_{L^r(\partial T)^d}^r
  \simeq \|\bu v_T\|_{\strain,r,T}^r
  \qquad\forall\bu v_T\in\bu U_T^k,
\end{equation}
which, summed over $T\in\T_h$, gives
\begin{equation}\label{eq:bound.res:stability.boundedness}
  \| \dgrad{k}{h}\bu v_h \|_{L^r(\Omega)^{d\times d}}^r + |\bu v_h|_{r,h}^r
  \simeq \|\bu v_h\|_{\strain,r,h}^r
  \qquad\forall\bu v_h\in\bu U_h^k.
\end{equation}
    
\begin{lemma}[Properties of $\cst{s}_h$]

  Under Assumption \ref{ass:stress}, we have the following properties for $\cst{s}_h$:
  \begin{enumerate}[left=0pt]
  \item  \emph{H\"older continuity.}  For all $\bu u_h, \bu v_h, \bu w_h \in \bdU{h}{k}$, it holds
    \begin{align}\label{eq:sh:holder.cont}
      \left|\cst{s}_h(\bu u_h,\bu v_h)-\cst{s}_h(\bu w_h,\bu v_h)
      \right|
      &\lesssim \sigma_\cst{hc}\left( \delta^r+  |\bu u_h|_{\sob,h}^\sob+|\bu w_h|_{\sob,h}^\sob\right)^\frac{\sob-\sobs}{\sob}|\bu u_h - \bu w_h|_{\sob,h}^{\sobs-1}|\bu v_h|_{\sob,h}.
    \end{align}
  \item \emph{H\"older monotonicity.}
    For all $\bu u_h, \bu w_h \in \bdU{h}{k}$, it holds
    \begin{align}\label{eq:sh:strong.mono}
      \left(
      \cst{s}_h(\bu u_h,\bu u_h - \bu w_h)
      - \cst{s}_h(\bu w_h,\bu u_h - \bu w_h)
      \right)\left(\delta^r+|\bu u_h|_{\sob,h}^\sob+|\bu w_h|_{\sob,h}^\sob\right)^\frac{2-\sobs}{\sob} &\gtrsim \sigma_\cst{hm}|\bu u_h - \bu w_h|_{\sob,h}^{\sob+2-\sobs}.
    \end{align}  
    \item \emph{Sequential consistency.} Let $(\bu v_h)_{h\in\mathcal H}$ denote a bounded sequence of $(\bdU{h,0}{k},\Vert \cdot \Vert_{\strain,\sob,h})_{h \in \mathcal H}$. Then, for all $\b\phi \in C_\cst{c}^\infty(\Omega)^d$,
    \begin{align}\label{eq:sh:sequential.consistency}
      \cst{s}_h(\bu v_h, \bI{h}{k}\b\phi)& \CV{h}{0} 0.
    \end{align}
  \end{enumerate}
\end{lemma}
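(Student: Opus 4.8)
The plan is to establish the three properties in turn, each by reducing to local computations on $\partial T$ and invoking the pointwise $r$-power-framed inequalities. For the H\"older continuity \eqref{eq:sh:holder.cont}, I would start from the definition \eqref{eq:sh} and write the difference $\cst{s}_h(\bu u_h,\bu v_h)-\cst{s}_h(\bu w_h,\bu v_h)$ as a sum over $T\in\T_h$ of boundary integrals. On each face the integrand is of the form $\big[\phi_r(\bdfbres{k}{\partial T}\bu u_T)-\phi_r(\bdfbres{k}{\partial T}\bu w_T)\big]\cdot\bdfbres{k}{\partial T}\bu v_T$ where $\phi_r(\b x)=(\delta^r+|\b x|^r)^{(r-2)/r}\b x$ is (up to the $\delta$-shift) the $r$-power-framed map. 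Using the H\"older continuity \eqref{eq:s.power:holder.cont} of this map (applied with the $\delta$-shifted variant, which satisfies the same inequality — this is exactly the kind of bound recorded in \cite{Botti.Castanon-Quiroz.ea:20} and \cite{Di-Pietro.Droniou.ea:21}) gives a pointwise bound $\lesssim\sigma_\cst{hc}(\delta^r+|\bdfbres{k}{\partial T}\bu u_T|^r+|\bdfbres{k}{\partial T}\bu w_T|^r)^{(r-\sobs)/r}|\bdfbres{k}{\partial T}(\bu u_T-\bu w_T)|^{\sobs-1}|\bdfbres{k}{\partial T}\bu v_T|$ (using $\bdfbres{k}{\partial T}$ linear). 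Then I would integrate over $\partial T$ against the weight $h_T$, apply the $(1;\frac{r}{r-\sobs},\frac{r}{\sobs-1},r)$-H\"older inequality on $\partial T$, sum over $T$, and apply a discrete H\"older inequality over the elements with exponents $(\frac{r}{r-\sobs},\frac{r}{\sobs-1},r)$; the $h_T$ weights distribute as $h_T^{(r-\sobs)/r}\cdot h_T^{(\sobs-1)/r}\cdot h_T^{1/r}=h_T$, matching the definition \eqref{eq:bound.res.seminorm} of $|{\cdot}|_{\sob,h}$ on each factor, which yields \eqref{eq:sh:holder.cont}.

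For the H\"older monotonicity \eqref{eq:sh:strong.mono}, I would set $\bu e_T=\bu u_T-\bu w_T$ and, using linearity of $\bdfbres{k}{\partial T}$, write the left-hand side as a sum over $T$ of $\frac{\sigma_\cst{hc}+\sigma_\cst{hm}}{2}h_T\int_{\partial T}\big[\phi_r(\bdfbres{k}{\partial T}\bu u_T)-\phi_r(\bdfbres{k}{\partial T}\bu w_T)\big]\cdot\bdfbres{k}{\partial T}\bu e_T$, bound the weight $(\delta^r+|\bu u_h|_{\sob,h}^\sob+|\bu w_h|_{\sob,h}^\sob)^{(2-\sobs)/\sob}$ from below termwise (if $r\le 2$) or handle it monotonically (if $r>2$) so as to compare with the local weight $(\delta^r+|\bdfbres{k}{\partial T}\bu u_T|^r+|\bdfbres{k}{\partial T}\bu w_T|^r)^{(2-\sobs)/\sob}$ inside each $T$, apply the pointwise H\"older monotonicity \eqref{eq:s.power:strong.mono} of $\phi_r$ to get $\gtrsim\sigma_\cst{hm}h_T\int_{\partial T}(\delta^r+\cdots)^{(\sobs-2)/r}|\bdfbres{k}{\partial T}\bu e_T|^{r+2-\sobs}$, and then recombine: multiplying by the global weight raised to the power $(2-\sobs)/\sob$, the local weight powers telescope and a (reverse) discrete H\"older inequality in $\ell^{\frac{r+2-\sobs}{r}}$ over the elements recovers $|\bu e_h|_{\sob,h}^{r+2-\sobs}$. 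This mirrors the argument for the H\"older monotonicity of $\cst{a}_h$ in \cite{Botti.Castanon-Quiroz.ea:20}.

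For the sequential consistency \eqref{eq:sh:sequential.consistency}, I would use the polynomial consistency \eqref{eq:drec:polynomial.consistency}, namely $\bdfbres{k}{\partial T}(\bI{T}{k}\b\phi)=\b 0$ whenever $\b\phi$ is a polynomial of degree $\le k+1$ on $T$. For smooth $\b\phi$, approximating locally by its degree-$(k+1)$ Taylor polynomial and using the boundedness \eqref{eq:bound.res.seminorm}–\eqref{eq:I:boundedness} together with the approximation properties \eqref{eq:proj:app}, one gets $|\bI{h}{k}\b\phi|_{\sob,h}\lesssim h^{k+1}\|\b\phi\|_{C^{k+2}}$, so in particular $|\bI{h}{k}\b\phi|_{\sob,h}\to 0$. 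Then apply the H\"older continuity \eqref{eq:sh:holder.cont} with $(\bu u_h,\bu w_h,\bu v_h)=(\bu v_h,\b 0,\bI{h}{k}\b\phi)$ (noting $\cst{s}_h(\b 0,\cdot)=0$, or rather bounding $\cst{s}_h(\bu v_h,\bI{h}{k}\b\phi)$ directly via a single application of the pointwise bound), which gives $|\cst{s}_h(\bu v_h,\bI{h}{k}\b\phi)|\lesssim\sigma_\cst{hc}(\delta^r+|\bu v_h|_{\sob,h}^\sob)^{(r-\sobs)/\sob}|\bu v_h|_{\sob,h}^{\sobs-1}|\bI{h}{k}\b\phi|_{\sob,h}$; the first factors are bounded uniformly in $h$ since $(\bu v_h)_h$ is bounded in $\|{\cdot}\|_{\strain,\sob,h}$ and $|{\cdot}|_{\sob,h}\lesssim\|{\cdot}\|_{\strain,\sob,h}$ by \eqref{eq:bound.res:stability.boundedness}, while the last factor vanishes as $h\to 0$, giving \eqref{eq:sh:sequential.consistency}.

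The main obstacle is the careful bookkeeping of the $h_T$-weights and of the $\delta$-shifted weights when passing between the \emph{global} weighting factor $(\delta^r+|\bu u_h|_{\sob,h}^\sob+|\bu w_h|_{\sob,h}^\sob)$ — built from full boundary-residual seminorms — and the \emph{local} pointwise weight $(\delta^r+|\bdfbres{k}{\partial T}\bu u_T|^r+|\bdfbres{k}{\partial T}\bu w_T|^r)$ appearing in \eqref{eq:s.power:holder.cont}–\eqref{eq:s.power:strong.mono}; distributing these correctly through the (possibly reverse) discrete H\"older inequalities, and in particular getting the exponents $\frac{r-\sobs}{\sob}$ and $\sobs-1$ to come out exactly, is the delicate point. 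Everything else follows the template already used for $\cst{a}_h$ in \cite{Botti.Castanon-Quiroz.ea:20,Di-Pietro.Droniou.ea:21}.
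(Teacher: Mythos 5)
Your plan is essentially correct and takes the same route as the paper. The paper delegates the H\"older continuity and H\"older monotonicity of $\cst{s}_h$ to \cite{Botti.Castanon-Quiroz.ea:20} and \cite{Di-Pietro.Droniou.ea:21}; your detailed plan for \eqref{eq:sh:holder.cont} (pointwise $\delta$-shifted $r$-power-framed bound, then a $(1;\frac{r}{r-\sobs},\frac{r}{\sobs-1},r)$-H\"older inequality on $\partial T$ and a discrete H\"older over elements distributing $h_T$) is exactly what those references do and works as stated. For the sequential consistency \eqref{eq:sh:sequential.consistency}, you and the paper both apply the H\"older continuity with $(\bu u_h,\bu v_h,\bu w_h)=(\bu v_h,\bI{h}{k}\b\phi,\bu 0)$ and then show $|\bI{h}{k}\b\phi|_{r,h}\to 0$; the paper inserts $\PROJ{T}{k+1}\b\phi$ via the polynomial consistency \eqref{eq:drec:polynomial.consistency} and uses the local seminorm equivalence \eqref{eq:bound.res:stability.boundedness:local}, the boundedness \eqref{eq:I:boundedness} of $\bI{T}{k}$, and the strong convergence \eqref{eq:proj.seq.cons} of $\PROJ{h}{k+1}\b\phi$, which is a bit cleaner than your Taylor-polynomial argument but amounts to the same thing. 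One imprecision worth correcting in your plan for \eqref{eq:sh:strong.mono}: the step ``bound the weight $(\delta^r+|\bu u_h|_{\sob,h}^\sob+|\bu w_h|_{\sob,h}^\sob)^{(2-\sobs)/\sob}$ from below termwise \ldots so as to compare with the local weight'' does not describe a valid move, since the global weight is a single scalar and is not pointwise comparable to the local weight on each $\partial T$. The correct mechanism --- which your last sentence hints at but does not carry out --- is to apply the pointwise strong monotonicity first, obtaining a lower bound involving $\sum_{T}h_T\int_{\partial T}(\delta^r+|\bdfbres{k}{\partial T}\bu u_T|^r+|\bdfbres{k}{\partial T}\bu w_T|^r)^{(\sobs-2)/r}|\bdfbres{k}{\partial T}\bu e_T|^{r+2-\sobs}$, and then apply an ordinary (not reverse) H\"older inequality with exponents $\frac{r+2-\sobs}{r}$ and $\frac{r+2-\sobs}{2-\sobs}$ to the pointwise factorization $|\bdfbres{k}{\partial T}\bu e_T|^r=\big[(\text{local wt})^{(\sobs-2)/r}|\bdfbres{k}{\partial T}\bu e_T|^{r+2-\sobs}\big]^{r/(r+2-\sobs)}(\text{local wt})^{(2-\sobs)/(r+2-\sobs)}$, which after summing over $T$ yields $|\bu e_h|_{r,h}^{r+2-\sobs}\lesssim\big(\sum_T\cdots\big)\,(\delta^r+|\bu u_h|_{r,h}^r+|\bu w_h|_{r,h}^r)^{(2-\sobs)/r}$ directly; no termwise comparison between global and local weights occurs.
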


\begin{proof} 
  Properties \eqref{eq:sh:holder.cont}--\eqref{eq:sh:strong.mono} can be proved reasoning as in \cite{Botti.Castanon-Quiroz.ea:20} and proceeding as in \cite{Di-Pietro.Droniou.ea:21} for the addition of $\delta$.
  It remains to prove \eqref{eq:sh:sequential.consistency}. Using the H\"older continuity \eqref{eq:sh:holder.cont} of $\cst{s}_h$ with $(\bu u_h, \bu v_h, \bu w_h) = (\bu v_h, \bI{h}{k}\b \phi, \bu 0)$, we infer
  \begin{equation}\label{eq:sh:sequential.consistency:0}
    |\cst{s}_{h}(\bu v_{h},\bI{h}{k} \b\phi)| 
    \lesssim \sigma_\cst{hc}\left( \delta^r+  |\bu v_h|_{\sob,h}^\sob+\right)^\frac{\sob-\sobs}{\sob}|\bu v_h|_{\sob,h}^{\sobs-1}|\bI{h}{k}\b\phi|_{\sob,h}.
  \end{equation}
  Recalling the definition \eqref{eq:bound.res.seminorm} of the boundary residual seminorm, we get
  \[
  \begin{aligned}
    |\bI{h}{k}\b\phi|_{\sob,h}^r
    &= \sum_{T\in\T_h}h_T
    \Vert \bdfbres{k}{\partial T}(\bI{h}{k}\b\phi) \Vert_{L^r(\partial T)^d}^r
    \\
    &= \sum_{T\in\T_h} h_T\|\bdfbres{k}{\partial T}[\bI{T}{k}(\b\phi-\PROJ{T}{k+1}\b\phi)]\|_{L^r(\partial T)^d}^r
    \\
    &\lesssim \sum_{T\in\T_h} \|\bI{T}{k}(\b\phi-\PROJ{T}{k+1}\b\phi)\|_{\strain,\sob,T}^r
    \lesssim |\b\phi-\PROJ{h}{k+1}\b\phi|_{W^{1,r}(\Omega)^d}^r,
  \end{aligned}
  \]
  where we have used the polynomial consistency \eqref{eq:drec:polynomial.consistency} of the boundary residual to insert $\PROJ{T}{k+1}\b\phi$ in the second line,
  the local seminorm equivalence \eqref{eq:bound.res:stability.boundedness:local} to pass to the third line,
  and the boundedness \eqref{eq:I:boundedness} of $\bI{T}{k}$ to conclude.
  Plugging this bound into \eqref{eq:sh:sequential.consistency:0} and using \eqref{eq:proj.seq.cons} along with the boundedness of $(\Vert \bu v_{h} \Vert_{\strain,\sob,h})_{h \in \mathcal H}$ (which implies that of $(|\bu v_h|_{\sob,h})_{h\in\mathcal H}$ by virtue of \eqref{eq:bound.res:stability.boundedness}) yields \eqref{eq:sh:sequential.consistency}.
\end{proof}
\begin{lemma}[Properties of $\cst{a}_h$]

  Under Assumption \ref{ass:stress}, we have the following properties for $\cst{a}_h$:
  \begin{enumerate}[left=0pt]
  \item  \emph{H\"older continuity.}  For all $\bu u_h, \bu v_h, \bu w_h \in \bdU{h}{k}$, it holds
    \begin{align}\label{eq:ah:holder.cont}
      \left|\cst{a}_h(\bu u_h,\bu v_h)-\cst{a}_h(\bu w_h,\bu v_h)
      \right|
      &\lesssim \sigma_\cst{hc}\left( \delta^r+  \| \bu u_h \|_{\strain,\sob,h}^\sob+\| \bu w_h \|_{\strain,\sob,h}^\sob\right)^\frac{\sob-\sobs}{\sob}\| \bu u_h - \bu w_h \|_{\strain,\sob,h}^{\sobs-1}\| \bu v_h \|_{\strain,\sob,h}.
    \end{align}
  \item \emph{H\"older monotonicity.} 
   For $m \in \{\sobs,\sob\}$ and all $\bu u_h, \bu w_h \in \bdU{h}{k}$, it holds, with $C_\cst{da} > 0$ independent of $h$,
    \begin{multline}\label{eq:ah:strong.mono}
     \cst{a}_h(\bu u_h,\bu u_h - \bu w_h)-\cst{a}_h(\bu w_h,\bu u_h - \bu w_h) \\
       \ge C_\cst{da}\sigma_\cst{hm}\left(\delta^r+\| \bu u_h \|_{\strain,\sob,h}^\sob+\| \bu w_h \|_{\strain,\sob,h}^\sob\right)^\frac{\sobs-2}{\sob}\delta^{r-m}\| \bu u_h - \bu w_h \|_{\strain,m,h}^{m+2-\sobs}.
    \end{multline}
  \item \emph{Consistency.} Let $\b w \in \b U \cap W^{k+2,\sob}(\T_h)^d$ be such that $\stress(\cdot,\GRADs \b w) \in W^{1,\sob'}(\Omega)^{d\times d} \cap W^{k+1,\sob'}(\T_h)^{d\times d}$. Then,
    \begin{multline}\label{eq:ah:consistency}
      \sup_{\bu v_h \in \bdU{h,0}{k},\| \bu v_h \|_{\strain,\sob,h} = 1} \left|\int_\Omega (\DIV \stress(\cdot,\GRADs \b w)) \cdot \b v_h + \cst{a}_h(\bI{h}{k} \b w,\bu v_h)\right| 
      \lesssim h^{k+1}|\stress(\cdot,\GRADs \b w)|_{W^{k+1,\sob'}(\T_h)^{d\times d}} \\
      + h^{(k+1)(\sobs-1)}\min\left(\zeta_h(\b w);1\right)^{2-\sobs}
      \sigma_\cst{hc}\left(\delta^\sob + |\b w|_{W^{1,\sob}(\Omega)^d}^\sob\right)^\frac{\sob-\sobs}{\sob}|\b w|_{W^{k+2,\sob}(\T_h)^d}^{\sobs-1},
    \end{multline}
    where $\zeta_h(\b w) \coloneq \delta^{-1}\max_{T\in\T_h}\big(|T|^{-\frac{1}{p}}|\b w|_{W^{k+2,\sob}(T)^d}\big)h^{k+1}$ if $\delta \ne 0$, 
$\zeta_h(\b w) \coloneq \infty \text{ otherwise}$.
  \end{enumerate}
\end{lemma}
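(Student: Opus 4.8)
I treat the three properties in turn; the first two are routine adaptations, the consistency estimate being the substantial one.

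\emph{H\"older continuity \eqref{eq:ah:holder.cont} and H\"older monotonicity \eqref{eq:ah:strong.mono}.} I would write $\cst{a}_h(\bu w_h,\bu v_h) = \int_\Omega \stress(\cdot,\dgrads{k}{h}\bu w_h):\dgrads{k}{h}\bu v_h + \cst{s}_h(\bu w_h,\bu v_h)$ and treat the two summands separately. For the stabilization one, invoke \eqref{eq:sh:holder.cont}--\eqref{eq:sh:strong.mono} and $|\bu v_h|_{\sob,h}\lesssim\|\bu v_h\|_{\strain,\sob,h}$ (consequence of \eqref{eq:bound.res:stability.boundedness}). For the volume one, apply pointwise \eqref{eq:ass:stress:holder.cont} (resp.\ \eqref{eq:ass:stress:strong.mono}) at $(\b\tau,\b\eta)=(\dgrads{k}{h}\bu u_h,\dgrads{k}{h}\bu w_h)$, reduce the exponent $(\sob-2)/\sob$ to $(\sob-\sobs)/\sob$ via $|\b\tau-\b\eta|^{2-\sobs}\lesssim(\delta^\sob+|\b\tau|^\sob+|\b\eta|^\sob)^{(2-\sobs)/\sob}$, integrate with a $(1;\sob/(\sob-\sobs),\sob/(\sobs-1),\sob)$-H\"older inequality \eqref{eq:holder} (and, for the monotonicity, a reverse H\"older inequality extracting a power of $\|\dgrads{k}{h}(\bu u_h-\bu w_h)\|_{L^\sob(\Omega)^{d\times d}}$, as in Lemma \ref{lem:a:strong.mono}), and convert the $L^\sob$-norms of $\dgrad{k}{h}$, $\dgrads{k}{h}$ and $|{\cdot}|_{\sob,h}$ into $\|{\cdot}\|_{\strain,\sob,h}$ through \eqref{eq:Gh:boundedness}, \eqref{eq:bound.res:stability.boundedness} and a (local) discrete Korn-type bound at the level of the gradient reconstruction. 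This is entirely parallel to \cite{Botti.Castanon-Quiroz.ea:20}, with $\delta$ inserted as in \cite{Di-Pietro.Droniou.ea:21}, so I would only sketch it.

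\emph{Consistency \eqref{eq:ah:consistency}: reduction.} Fix $\bu v_h\in\bdU{h,0}{k}$ with $\|\bu v_h\|_{\strain,\sob,h}=1$ and set $\b\Sigma\coloneqq\stress(\cdot,\GRADs\b w)\in W^{1,\sob'}(\Omega)^{d\times d}\cap W^{k+1,\sob'}(\T_h)^{d\times d}$. Using that $\stress$ is $\Ms{d}$-valued, the single-valuedness of the normal trace of $\b\Sigma$ across internal faces, and $\b v_F=\b 0$ on $\partial\Omega$, an element-by-element integration by parts gives $\int_\Omega(\DIV\b\Sigma)\cdot\b v_h = \sum_{T\in\T_h}\big({-}\int_T\b\Sigma:\GRADs\b v_T+\sum_{F\in\F_T}\int_F(\b\Sigma\,\b n_{TF})\cdot(\b v_T-\b v_F)\big)$. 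On the discrete side, \eqref{eq:G.T:proj} and the commutation of $\PROJ{h}{k}$ with the symmetric part yield $\dgrads{k}{h}\bI{h}{k}\b w=\PROJ{h}{k}\GRADs\b w$; hence, writing $\b\Sigma_h\coloneqq\stress(\cdot,\PROJ{h}{k}\GRADs\b w)$, replacing $\dgrads{k}{T}\bu v_T$ by $\dgrad{k}{T}\bu v_T$ ($\stress$ being $\Ms{d}$-valued), inserting $\PROJ{T}{k}$ (licit since $\dgrad{k}{T}\bu v_T\in\Poly^k(T)^{d\times d}$) and using \eqref{eq:G.T}, one gets $\cst{a}_h(\bI{h}{k}\b w,\bu v_h)-\cst{s}_h(\bI{h}{k}\b w,\bu v_h)=\sum_{T\in\T_h}\big(\int_T\PROJ{T}{k}\b\Sigma_h:\GRADs\b v_T+\sum_{F\in\F_T}\int_F(\b v_F-\b v_T)\cdot(\PROJ{T}{k}\b\Sigma_h\,\b n_{TF})\big)$. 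Adding these two identities, inserting $\pm\PROJ{T}{k}\b\Sigma$ and using $\int_T(\PROJ{T}{k}\b\Sigma-\b\Sigma):\GRADs\b v_T=0$ (since $\GRADs\b v_T\in\Poly^{k-1}(T)^{d\times d}$), the consistency error $\mathcal E(\bu v_h)\coloneqq\int_\Omega(\DIV\b\Sigma)\cdot\b v_h+\cst{a}_h(\bI{h}{k}\b w,\bu v_h)$ becomes
\begin{multline*}
  \mathcal E(\bu v_h)=\sum_{T\in\T_h}\int_T(\b\Sigma_h-\b\Sigma):\GRADs\b v_T+\cst{s}_h(\bI{h}{k}\b w,\bu v_h)\\
  +\sum_{T\in\T_h}\sum_{F\in\F_T}\int_F(\b v_F-\b v_T)\cdot\big((\PROJ{T}{k}(\b\Sigma_h-\b\Sigma)+\PROJ{T}{k}\b\Sigma-\b\Sigma)\b n_{TF}\big).
\end{multline*}

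\emph{Consistency: bounds.} Three groups of terms remain. (i) For the contributions of the projection error $\PROJ{T}{k}\b\Sigma-\b\Sigma$, factor $\|\b v_F-\b v_T\|_{L^\sob(F)^d}=h_F^{1/\sob'}\big(h_F^{1-\sob}\|\b v_F-\b v_T\|_{L^\sob(F)^d}^\sob\big)^{1/\sob}$, apply H\"older together with $\|\bu v_h\|_{\strain,\sob,h}=1$, and use the trace approximation \eqref{eq:proj:app:F} of $\PROJ{T}{k}$ (index $k+1$): this produces $h^{k+1}|\stress(\cdot,\GRADs\b w)|_{W^{k+1,\sob'}(\T_h)^{d\times d}}$. (ii) For the contributions of $\b\Sigma_h-\b\Sigma=\stress(\cdot,\PROJ{h}{k}\GRADs\b w)-\stress(\cdot,\GRADs\b w)$ (the volume one directly; the face one after a discrete inverse-trace inequality on $\PROJ{T}{k}(\b\Sigma_h-\b\Sigma)$ combined with the factorization of $\|\b v_F-\b v_T\|_{L^\sob(F)^d}$ above), a discrete H\"older inequality reduces matters to bounding $\big(\sum_{T\in\T_h}\|\b\Sigma_h-\b\Sigma\|_{L^{\sob'}(T)^{d\times d}}^{\sob'}\big)^{1/\sob'}$, which I estimate via \eqref{eq:ass:stress:holder.cont}: when $\sob\ge2$ the exponent $(\sob-2)/\sob\ge0$, so $L^{\sob'}$- and $L^\sob$-stability of $\PROJ{T}{k}$ plus \eqref{eq:proj:app:T} applied to $\GRADs\b w\in W^{k+1,\sob}(\T_h)^{d\times d}$ give the asserted bound with $(\delta^\sob+|\b w|_{W^{1,\sob}(\Omega)^d}^\sob)^{(\sob-\sobs)/\sob}$ and $\min(\zeta_h(\b w);1)^{2-\sobs}=1$; when $\sob<2$ I bound the negative power $(\delta^\sob+|\PROJ{T}{k}\GRADs\b w|^\sob+|\GRADs\b w|^\sob)^{(\sob-2)/\sob}\le\min\big(\delta^{\sob-2},|\PROJ{T}{k}\GRADs\b w-\GRADs\b w|^{\sob-2}\big)$, pull $\min(\zeta_h(\b w);1)^{2-\sobs}$ out of the integral via a Sobolev-type $L^\infty$ projection-error estimate $\|\PROJ{T}{k}\GRADs\b w-\GRADs\b w\|_{L^\infty(T)^{d\times d}}\lesssim|T|^{-1/\sob}h_T^{k+1}|\b w|_{W^{k+2,\sob}(T)^d}$, the remaining factor integrating to $\|\PROJ{T}{k}\GRADs\b w-\GRADs\b w\|_{L^\sob(T)^{d\times d}}^{\sobs-1}\lesssim h_T^{(k+1)(\sobs-1)}|\b w|_{W^{k+2,\sob}(T)^d}^{\sobs-1}$ by \eqref{eq:proj:app:T}; in both cases summation gives $h^{(k+1)(\sobs-1)}\min(\zeta_h(\b w);1)^{2-\sobs}\sigma_\cst{hc}(\delta^\sob+|\b w|_{W^{1,\sob}(\Omega)^d}^\sob)^{(\sob-\sobs)/\sob}|\b w|_{W^{k+2,\sob}(\T_h)^d}^{\sobs-1}$. (iii) For $\cst{s}_h(\bI{h}{k}\b w,\bu v_h)$, the polynomial consistency \eqref{eq:drec:polynomial.consistency} gives $\bdfbres{k}{\partial T}\bI{T}{k}\b w=\bdfbres{k}{\partial T}[\bI{T}{k}(\b w-\PROJ{T}{k+1}\b w)]$, whence $|\bI{h}{k}\b w|_{\sob,h}\lesssim h^{k+1}|\b w|_{W^{k+2,\sob}(\T_h)^d}$ by \eqref{eq:bound.res:stability.boundedness:local}, \eqref{eq:I:boundedness} and \eqref{eq:proj:app:T}; estimating $\cst{s}_h$ from its definition \eqref{eq:sh} (mimicking, for $\sob<2$, the $\min$-splitting of step (ii)) or from \eqref{eq:sh:holder.cont} (for $\sob\ge2$), and using $|\bu v_h|_{\sob,h}\le\|\bu v_h\|_{\strain,\sob,h}=1$, reproduces the expression of step (ii). Collecting (i)--(iii) proves \eqref{eq:ah:consistency}.

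\emph{Main obstacle.} The integration-by-parts and projection bookkeeping is routine; the delicate point is the degeneracy-aware estimate when $\sob<2$, namely controlling the negative power $(\sob-2)/\sob$ of the strain rate appearing both in \eqref{eq:ass:stress:holder.cont} and in \eqref{eq:sh} by $\min(\delta^{\sob-2},|{\cdot}|^{\sob-2})$, retaining $|{\cdot}|^{\sobs-1}$ for an $L^\sob$ approximation estimate, and converting the residual pointwise factor into the uniform multiplier $\min(\zeta_h(\b w);1)^{2-\sobs}$ through the $L^\infty$ projection-error bound. This mirrors \cite{Di-Pietro.Droniou.ea:21}, the new feature being that $\PROJ{h}{k}\GRADs\b w=\dgrads{k}{h}\bI{h}{k}\b w$ replaces $\GRADs\b w$, so that the face contributions generated by the gradient reconstruction must also be handled (through the inverse-trace inequality and the local seminorm equivalence \eqref{eq:bound.res:stability.boundedness:local}).
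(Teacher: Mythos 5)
Your proposal is correct and follows essentially the same route as the paper's proof, which is given only by reference: the paper invokes \cite{Botti.Castanon-Quiroz.ea:20} for \eqref{eq:ah:holder.cont}--\eqref{eq:ah:strong.mono}, using the discrete Korn inequality for the $r>2$ case of \eqref{eq:ah:strong.mono} exactly as you indicate, and obtains \eqref{eq:ah:consistency} by splicing in the $\min(\zeta_h;1)$ mechanism of \cite[Theorem 10]{Di-Pietro.Droniou.ea:21}. Your detailed consistency derivation — integration by parts, the commutation identity $\dgrads{k}{h}\bI{h}{k}\b w = \PROJ{h}{k}\GRADs\b w$ yielding $\b\Sigma_h$, insertion of $\PROJ{T}{k}$, the split into the three groups of terms, and especially the degeneracy-aware estimate with $\min(\delta^{\sob-2},|e|^{\sob-2})$ followed by the $L^\infty$ projection-error bound to pull out $\min(\zeta_h(\b w);1)^{2-\sobs}$ — is the correct expansion of those citations. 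The only caveat worth keeping in mind is that the $L^\infty$ projection-error estimate $\|\PROJ{T}{k}\GRADs\b w-\GRADs\b w\|_{L^\infty(T)^{d\times d}}\lesssim |T|^{-1/\sob}h_T^{k+1}|\b w|_{W^{k+2,\sob}(T)^d}$ relies on the local Sobolev embedding $W^{k+1,\sob}(T)\hookrightarrow L^\infty(T)$ with the appropriate scaling, which requires $(k+1)\sob>d$; since $k\ge1$, this holds throughout the regime where the error estimate is applied, but it should be stated explicitly rather than treated as an automatic inverse-type bound (the function $e=\PROJ{T}{k}\GRADs\b w-\GRADs\b w$ is not a polynomial).
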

 
\begin{proof} 
  Properties \eqref{eq:ah:holder.cont} is proved in \cite{Botti.Castanon-Quiroz.ea:20} (with \eqref{eq:sh:holder.cont} replacing \cite[Eq. (41b)]{Botti.Castanon-Quiroz.ea:20}).
Similarly, \eqref{eq:ah:strong.mono} is shown replacing \cite[Eq. (41c)]{Botti.Castanon-Quiroz.ea:20} by \eqref{eq:sh:strong.mono} when $r \le 2$, and the proof of the case $r > 2$ is analogous to that of the corresponding property \eqref{eq:a:strong.mono} for $a$, replacing the continuous Korn inequality \eqref{eq:Korn} by its discrete counterpart \eqref{eq:discrete.Korn}.
  Finally, \eqref{eq:ah:consistency} is obtained modifying the reasoning of \cite{Botti.Castanon-Quiroz.ea:20} according to \cite[Theorem 10]{Di-Pietro.Droniou.ea:21} in order to introduce the term involving $\zeta_h(\b w)$.
\end{proof}

\subsection{Pressure-velocity coupling}

We define the global divergence reconstruction $\ddiv{k}{h} : \bdU{h}{k} \to \Poly^{k}(\T_h)$ by setting for all $\bu v_h \in \bdU{h}{k}$,
\begin{equation}\label{eq:Dh}
  \ddiv{k}{h}\bu v_{h} \coloneq \cst{tr}(\dgrad{k}{h}\bu v_{h}).
\end{equation}
The pressure-velocity coupling is realized by the bilinear form $\cst{b}_h : \bdU{h}{k} \times \Poly^k(\T_h) \to \R$ such that, for all $(\bu v_h,q_h) \in \bdU{h}{k} \times \Poly^k(\T_h)$, 
\begin{equation}\label{eq:bh}
  \cst{b}_h(\bu v_h,q_h) \coloneqq -\int_\Omega\ddiv{k}{h} \bu v_h~ q_h.
\end{equation}

\begin{lemma}[Properties of $\cst{b}_h$]
  We have the following properties for $\cst{b}_h$:
  \begin{enumerate}[left=0pt]
  \item \emph{Inf-sup stability.}\label{bh:inf-sup}   It holds, for all $q_h \in \dP{h}{k}$,
    \begin{equation}\label{eq:bh:inf-sup}
      \| q_h \|_{L^{\sob'}(\Omega)} \lesssim \sup\limits_{\bu v_h \in \bdU{h,0}{k}, \| \bu v_h \|_{\strain,\sob,h} = 1} \cst{b}_h(\bu v_h,q_h).
    \end{equation}
  \item \emph{Fortin operator.} For all $\b v \in W^{1,\sob}(\Omega)^d$ and all $q_h \in \Poly^k(\T_h)$,
    \begin{equation}\label{eq:bh:fortin}
      \cst{b}_h(\bI{h}{k}\b v,q_h) = b(\b v,q_h).
    \end{equation}
  \item\emph{Consistency.} For all $q \in W^{1,\sob'}(\Omega) \cap W^{k+1,\sob'}(\T_h)$,
    \begin{equation}\label{eq:bh:consistency}
      \sup\limits_{\bu v_h \in \bdU{h,0}{k},\| \bu v_h \|_{\strain,\sob,h} = 1} \left|\int_\Omega \GRAD q \cdot \b v_h - \cst{b}_h(\bu v_h,\proj{h}{k} q)\right| \lesssim h^{k+1} |q|_{W^{k+1,\sob'}(\T_h)}.
    \end{equation}
  \item \emph{Sequential consistency/1.}
    Let $(q_h)_{h \in \mathcal H} \in (\dP{h}{k})_{h\in\mathcal H}$ be such that $q_h \CV{h}{0} q \in P$ weakly in $L^{\sob'}(\Omega)$. Then, for all $\b\phi \in C_\cst{c}^\infty(\Omega)^d$, it holds
    \begin{equation}\label{eq:bh:first.sequential.consistency}
      \cst{b}_h(\bI{h}{k} \b\phi, q_h) \CV{h}{0} b(\b\phi,q).
    \end{equation}
  \item \emph{Sequential consistency/2.} Let $(\bu v_h)_{h\in\mathcal H} \in (\bdU{h,0}{k})_{h\in\mathcal H}$ be such that $\ddiv{k}{h} \bu v_h\CV{h}{0}\DIV\b v$ weakly in $L^{\sob}(\Omega)$. Then, for all $\psi \in C_\cst{c}^\infty(\Omega)$, it holds
    \begin{equation}\label{eq:bh:second.sequential.consistency}
      \cst{b}_h(\bu v_h,\proj{h}{k} \psi) \CV{h}{0} b(\b v,\psi).
    \end{equation}
  \end{enumerate}
\end{lemma}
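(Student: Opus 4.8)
The plan is to establish each of the five properties of $\cst{b}_h$ essentially by reducing them to known facts about the gradient reconstruction $\dgrad{k}{h}$ and the projectors, since $\cst{b}_h$ is built linearly from $\ddiv{k}{h} = \cst{tr}(\dgrad{k}{h})$.

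\emph{Inf-sup stability.} First I would note that for any $q_h \in \dP{h}{k}$ viewed as an element of $P \subset L^{\sob'}(\Omega)$, the continuous inf-sup stability of $b$ (already recalled via \cite{Bogovski:79} in the proof of Theorem \ref{thm:well-posedness}) provides $\b v \in \b U$ with $|\b v|_{W^{1,\sob}(\Omega)^d} \le 1$ and $b(\b v, q_h) \gtrsim \|q_h\|_{L^{\sob'}(\Omega)}$. Then I would take $\bu v_h \coloneq \bI{h}{k}\b v$ as the discrete test function: by the Fortin-operator property \eqref{eq:bh:fortin} (proved next), $\cst{b}_h(\bI{h}{k}\b v, q_h) = b(\b v, q_h)$, and by the boundedness \eqref{eq:I:boundedness} of the interpolator together with the Korn inequality \eqref{eq:Korn} and the norm equivalence \eqref{eq:norm.equiv}, $\|\bI{h}{k}\b v\|_{\strain,\sob,h} \lesssim |\b v|_{W^{1,\sob}(\Omega)^d} \le 1$; normalizing gives \eqref{eq:bh:inf-sup}.

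\emph{Fortin operator and consistency.} The identity \eqref{eq:bh:fortin} follows from the commutation property \eqref{eq:G.T:proj}: taking the trace, $\ddiv{k}{h}(\bI{h}{k}\b v) = \cst{tr}(\PROJ{h}{k}\GRAD\b v) = \proj{h}{k}(\div\b v)$, so that $\cst{b}_h(\bI{h}{k}\b v, q_h) = -\int_\Omega \proj{h}{k}(\div\b v)\, q_h = -\int_\Omega (\div\b v)\, q_h = b(\b v,q_h)$ since $q_h$ is a broken polynomial of degree $k$ and by definition \eqref{eq:proj} of $\proj{h}{k}$. For the consistency estimate \eqref{eq:bh:consistency}, I would integrate by parts elementwise: $\int_\Omega \GRAD q\cdot\b v_h = -\int_\Omega q\,\brkdiv\b v_h + \sum_{T}\int_{\partial T} q\,(\b v_T\cdot\b n_{TF})$, and rewrite $\cst{b}_h(\bu v_h,\proj{h}{k}q)$ using the definition \eqref{eq:G.T} of $\dgrad{k}{T}$ with $\b\tau = (\proj{T}{k}q)\,\mathbf{I}_d$ to produce a matching expression; the difference then involves only $q - \proj{h}{k}q$, which is controlled by the $(k+1,\sob',0)$-approximation properties \eqref{eq:proj:app:T}–\eqref{eq:proj:app:F} and a discrete H\"older/trace inequality against $\|\bu v_h\|_{\strain,\sob,h}$, yielding the $h^{k+1}$ rate. (This is standard and essentially \cite[Chapter 8]{Di-Pietro.Droniou:20}.)

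\emph{The two sequential consistencies.} For \eqref{eq:bh:first.sequential.consistency}, the Fortin property gives $\cst{b}_h(\bI{h}{k}\b\phi, q_h) = b(\b\phi, q_h) = -\int_\Omega (\div\b\phi)\, q_h$; since $\div\b\phi \in L^{\sob}(\Omega)$ is a fixed test function and $q_h \rightharpoonup q$ weakly in $L^{\sob'}(\Omega)$, this converges to $-\int_\Omega(\div\b\phi)\,q = b(\b\phi,q)$. For \eqref{eq:bh:second.sequential.consistency}, I would instead use the consistency estimate \eqref{eq:bh:consistency} applied with $q = \psi \in C_\cst{c}^\infty(\Omega)$: it gives $\big|\int_\Omega\GRAD\psi\cdot\b v_h - \cst{b}_h(\bu v_h,\proj{h}{k}\psi)\big| \lesssim h^{k+1}|\psi|_{W^{k+1,\sob'}(\T_h)} \to 0$ using boundedness of $(\|\bu v_h\|_{\strain,\sob,h})_h$; meanwhile $\cst{b}_h(\bu v_h,\proj{h}{k}\psi) = -\int_\Omega \ddiv{k}{h}\bu v_h\,\proj{h}{k}\psi = -\int_\Omega \ddiv{k}{h}\bu v_h\,\psi$ (again by \eqref{eq:proj} since $\ddiv{k}{h}\bu v_h$ is a degree-$k$ broken polynomial), which, by the weak convergence $\ddiv{k}{h}\bu v_h \rightharpoonup \div\b v$ in $L^\sob(\Omega)$ tested against the fixed $\psi \in L^{\sob'}(\Omega)$, converges to $-\int_\Omega (\div\b v)\,\psi = b(\b v,\psi)$. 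Combining the two limits gives the claim.

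\emph{Main obstacle.} I expect the only genuinely technical point to be the consistency estimate \eqref{eq:bh:consistency} — keeping track of the volumetric and face contributions in the integration-by-parts step and bounding the face terms with the correct $h_F$-weights via a discrete trace inequality so that everything collapses to $h^{k+1}|q|_{W^{k+1,\sob'}(\T_h)}$; the other four properties are short consequences of \eqref{eq:G.T:proj}, \eqref{eq:proj}, \eqref{eq:I:boundedness}, and \eqref{eq:Korn}, with \eqref{eq:bh:consistency} itself feeding into both \eqref{eq:bh:second.sequential.consistency} and (indirectly, through the Fortin operator) the inf-sup bound.
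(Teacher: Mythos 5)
Your proposal covers all five parts and is essentially correct. For items 1--3 the paper simply cites \cite{Botti.Castanon-Quiroz.ea:20}, whereas you sketch the arguments; your sketches are standard and sound (deriving the discrete inf-sup from the continuous one via the Fortin identity, obtaining \eqref{eq:bh:fortin} from the commutation property \eqref{eq:G.T:proj} and the projector definition \eqref{eq:proj}, and getting \eqref{eq:bh:consistency} by elementwise integration by parts and the approximation properties of the projectors). A minor point: in the inf-sup step you invoke the Korn inequality, which is unnecessary here — \eqref{eq:I:boundedness} controls $\|\bI{h}{k}\b v\|_{1,\sob,h}$ by $|\b v|_{W^{1,\sob}(\Omega)^d}$ directly, and \eqref{eq:norm.equiv} converts to $\|\cdot\|_{\strain,\sob,h}$.

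For the two sequential consistencies, your route is slightly different from the paper's. For \eqref{eq:bh:first.sequential.consistency} you use the Fortin identity to reduce to the exact equality $\cst{b}_h(\bI{h}{k}\b\phi,q_h)=-\int_\Omega(\div\b\phi)q_h$ and then pass to the weak limit; the paper instead uses the strong $L^\sob$ convergence $\ddiv{k}{h}(\bI{h}{k}\b\phi)\to\div\b\phi$ from \eqref{eq:GT.seq.cons} paired with the weak $L^{\sob'}$ convergence of $q_h$. Both work; yours is marginally cleaner because the equality is exact at every $h$. For \eqref{eq:bh:second.sequential.consistency} your \emph{second} observation — that $\cst{b}_h(\bu v_h,\proj{h}{k}\psi)=-\int_\Omega\ddiv{k}{h}\bu v_h\,\psi$ exactly, by \eqref{eq:proj}, and then weak convergence of $\ddiv{k}{h}\bu v_h$ against the fixed $\psi$ gives the limit — already proves the claim by itself. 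The first half of your argument for this item, invoking the consistency estimate \eqref{eq:bh:consistency}, is superfluous, and moreover relies on boundedness of $(\|\bu v_h\|_{\strain,\sob,h})_h$, which is not among the hypotheses of this item (only the weak convergence of $\ddiv{k}{h}\bu v_h$ is assumed). You should simply drop that half; the direct weak-convergence argument is self-contained, and it is also what the paper does (up to the cosmetic difference that the paper uses $\proj{h}{k}\psi\to\psi$ strongly in $L^{\sob'}$ via \eqref{eq:proj.seq.cons}, rather than the exactness of the projector pairing).
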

 
\begin{proof}
  Properties \eqref{eq:bh:inf-sup}--\eqref{eq:bh:consistency} are proved in \cite{Botti.Castanon-Quiroz.ea:20}.
  Let us prove \eqref{eq:bh:first.sequential.consistency}.
  Given $\b\phi \in C_\cst{c}^\infty(\Omega)^d$, \eqref{eq:GT.seq.cons} combined with the definition \eqref{eq:Dh} of $\ddiv{k}{h}$ yields $\ddiv{k}{h} (\bI{h}{k} \b\phi)\CV{h}{0} \div\b\phi$ strongly in $L^\sob(\Omega)$. Hence,
  $
  \cst{b}_h(\bI{h}{k} \b\phi, q_h) = -\int_\Omega \ddiv{k}{h} (\bI{h}{k} \b\phi)\ q_h \CV{h}{0} -\int_\Omega (\div \b\phi)\ q = b(\b\phi,q).
  $
  Let now $\psi \in C_\cst{c}^\infty(\Omega)$. Combining the fact that $\ddiv{k}{h} \bu v_h \CV{h}{0} \div \b v$ weakly in $L^\sob(\Omega)$ by assumption with \eqref{eq:proj.seq.cons}, we obtain \eqref{eq:bh:second.sequential.consistency} by writing
  $
  \cst{b}_h(\bu v_h, \proj{h}{k} \psi)
  = -\int_\Omega \ddiv{k}{h} \bu v_h\ \proj{h}{k} \psi \CV{h}{0} -\int_\Omega (\div\b v)\ \psi = b(v,\psi).
  $
\end{proof}

\subsection{Discrete problem and main results}

The discrete problem reads: Find $(\bu u_h,p_h) \in \bdU{h,0}{k} \times \dP{h}{k}$ such that
\begin{subequations}\label{eq:ns.discrete}
  \begin{alignat}{2}
    \label{eq:ns.discrete:momentum} \cst{a}_h(\bu u_h,\bu v_h) + \cst{c}_h(\bu u_h,\bu v_h) + \cst{b}_h(\bu v_h,p_h) &= \displaystyle\int_\Omega \b f \cdot \b v_h &\qquad& \forall \bu v_h \in \bdU{h,0}{k}, \\
    \label{eq:ns.discrete:mass} -\cst{b}_h(\bu u_h,q_h) &= 0 &\qquad& \forall q_h \in \dP{h}{k}. 
  \end{alignat}
\end{subequations}

The following theorem states the existence of a discrete solution to problem \eqref{eq:ns.discrete} and provide conditions for uniqueness.

\begin{theorem}[Existence and uniqueness for problem \eqref{eq:ns.discrete}]\label{thm:discrete.well-posedness}
Under Assumptions \ref{ass:stress} and \ref{ass:conv}, there exists a solution $(\bu u_h,p_h) \in \bdU{h,0}{k} \times \dP{h}{k}$ to the discrete problem \eqref{eq:ns.discrete}, and any solution satisfies
\begin{subequations}\label{thm:discrete.well-posedness:bounds}
    \begin{align}
      \| \bu u_h \|_{\strain,\sob,h} &\le C_\cst{dv}\left[
        \left(\sigma_\cst{hm}^{-1}\| \b f \|_{L^{\sob'}(\Omega)^d}\right)^{r'}+\min\left(\delta^r;\left(\delta^{2-\sobs}\sigma_\cst{hm}^{-1}\| \b f \|_{L^{\sob'}(\Omega)^d}\right)^\frac{r}{\sob+1-\sobs}\right)
        \right]^\frac{1}{r}, \label{eq:discrete.well-posedness:bounds:uh}\\
      \| p_h \|_{L^{\sob'}(\Omega)} &\lesssim \sigma_\cst{hc}\left[
        \sigma_\cst{hm}^{-1}\| \b f \|_{L^{\sob'}(\Omega)^d}+\delta^{|\sob-2|(\sobs-1)}\left(\sigma_\cst{hm}^{-1}\| \b f \|_{L^{\sob'}(\Omega)^d}\right)^\frac{\sobs-1}{\sob+1-\sobs}
        \right], \label{eq:discrete.well-posedness:bounds:ph}
    \end{align}
  \end{subequations}
  where $C_\cst{dv} > 0$ is independent of $h$.
  Moreover, assuming $2 \le s \le \frac{\sobs^*}{\sobs'}$ (cf. \eqref{eq:intervals.s:cont.uniqueness}) and that the following data smallness condition holds:
   \begin{equation}\label{eq:discrete.well-posedness:f}
  \delta^r+\left(\sigma_\cst{hm}^{-1}\| \b f \|_{L^{\sob'}(\Omega)^d}\right)^{r'} < \left(1+2C_\cst{dv}^r\right)^{-1}\left(C_{\cst{dc},\sobs}^{-1}\chi_\cst{hc}^{-1}C_\cst{da}\sigma_\cst{hm}\delta^{r-\sobs}\right)^{\frac{r}{s+1-\sobs}},  
  \end{equation}
   the solution of \eqref{eq:ns.discrete} is unique.
\end{theorem}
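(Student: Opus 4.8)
The plan is to mirror, step by step, the proof of Theorem~\ref{thm:well-posedness}, replacing each continuous tool by its discrete counterpart introduced in Sections~\ref{sec:discrete.setting}--\ref{sec:discrete.main.results}.

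\emph{Existence and a priori bounds.} First I would derive the velocity bound \eqref{eq:discrete.well-posedness:bounds:uh} by testing the momentum equation \eqref{eq:ns.discrete:momentum} with $\bu v_h = \bu u_h$: the coupling term $\cst{b}_h(\bu u_h,p_h)$ vanishes by \eqref{eq:ns.discrete:mass} with $q_h = p_h$, and $\cst{c}_h(\bu u_h,\bu u_h) = 0$ by the non-dissipativity \eqref{eq:ch:non-dissip}, so that $\cst{a}_h(\bu u_h,\bu u_h) = \int_\Omega \b f \cdot \b u_h$. Bounding the left-hand side from below via the H\"older monotonicity \eqref{eq:ah:strong.mono} of $\cst{a}_h$ with $\bu w_h = \bu 0$ (the term $\cst{a}_h(\bu 0,\bu u_h)$ being controlled thanks to $\stress(\cdot,\b 0)\in L^{\sob'}$ from \eqref{eq:ass:stress:0}) and the right-hand side from above via the discrete Sobolev embeddings \eqref{eq:discrete.sob.emb}, one recovers \eqref{eq:discrete.well-posedness:bounds:uh} exactly as in \cite{Botti.Castanon-Quiroz.ea:20} (and \cite[Proposition~6]{Di-Pietro.Droniou.ea:21} for the $\min$-term). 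Existence then follows from a topological-degree/Brouwer-type argument on the finite-dimensional space $\bdU{h,0}{k}\times\dP{h}{k}$, exactly as for the function $a$ in \cite{Botti.Castanon-Quiroz.ea:20} once $a$ is there replaced by $\cst{a}_h + \cst{c}_h$: the nonlinear operator attached to \eqref{eq:ns.discrete} is continuous and the a priori estimate supplies the coercivity needed to apply the fixed-point result. The pressure bound \eqref{eq:discrete.well-posedness:bounds:ph} is obtained from the inf-sup stability \eqref{eq:bh:inf-sup} of $\cst{b}_h$: for $\bu v_h$ with $\|\bu v_h\|_{\strain,\sob,h} = 1$, the momentum equation gives $\cst{b}_h(\bu v_h,p_h) = \int_\Omega \b f\cdot\b v_h - \cst{a}_h(\bu u_h,\bu v_h) - \cst{c}_h(\bu u_h,\bu v_h)$, and bounding the right-hand side with the H\"older continuity \eqref{eq:ah:holder.cont} of $\cst{a}_h$ (splitting $\cst{a}_h(\bu u_h,\bu v_h) = [\cst{a}_h(\bu u_h,\bu v_h) - \cst{a}_h(\bu 0,\bu v_h)] + \cst{a}_h(\bu 0,\bu v_h)$) and the H\"older continuity \eqref{eq:ch:holder.cont} of $\cst{c}_h$ with $\bu w_h = \bu 0$, then inserting \eqref{eq:discrete.well-posedness:bounds:uh}, yields \eqref{eq:discrete.well-posedness:bounds:ph}.

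\emph{Uniqueness.} Let $(\bu u_h,p_h)$ and $(\bu u_h',p_h')$ be two solutions of \eqref{eq:ns.discrete}. Subtracting the momentum equations written for each, we get, for all $\bu v_h\in\bdU{h,0}{k}$,
\begin{equation*}
  \cst{a}_h(\bu u_h,\bu v_h) - \cst{a}_h(\bu u_h',\bu v_h) + \cst{c}_h(\bu u_h,\bu v_h) - \cst{c}_h(\bu u_h',\bu v_h) + \cst{b}_h(\bu v_h,p_h-p_h') = 0.
\end{equation*}
If $\bu u_h = \bu u_h'$, this reduces to $\cst{b}_h(\bu v_h,p_h-p_h') = 0$ for all $\bu v_h$, whence $p_h = p_h'$ by \eqref{eq:bh:inf-sup}; thus uniqueness of the solution is equivalent to uniqueness of the discrete velocity. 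Assume now $2\le s\le\frac{\sobs^*}{\sobs'}$, set $\bu e_h\coloneq\bu u_h - \bu u_h'$, and suppose $\bu e_h\ne\bu 0$, i.e.\ $\|\bu e_h\|_{\strain,\sobs,h} > 0$ (recall that $\|{\cdot}\|_{\strain,\sobs,h}$ is a norm on $\bdU{h,0}{k}$ thanks to the discrete Korn inequality \eqref{eq:discrete.Korn}). Testing the identity above with $\bu v_h = \bu e_h$, using $\cst{b}_h(\bu e_h,p_h-p_h') = 0$ (from \eqref{eq:ns.discrete:mass}), then applying the H\"older continuity \eqref{eq:ch:holder.cont} of $\cst{c}_h$ and the H\"older monotonicity \eqref{eq:ah:strong.mono} of $\cst{a}_h$, both with $m = \sobs$, and finally invoking the a priori bound \eqref{eq:discrete.well-posedness:bounds:uh}, I would obtain a chain of inequalities of exactly the same form as the one in the proof of Theorem~\ref{thm:well-posedness}, with $C_\cst{v}$, $C_\cst{a}$, $C_{\cst{c},\sobs}$ replaced by $C_\cst{dv}$, $C_\cst{da}$, $C_{\cst{dc},\sobs}$. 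Simplifying by $\|\bu e_h\|_{\strain,\sobs,h}^{2} > 0$ and raising to the power $\frac{r}{\sobs-1-\conv} < 0$ contradicts the data smallness assumption \eqref{eq:discrete.well-posedness:f}, which proves uniqueness.

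\emph{Main obstacle.} The bulk of the argument is a faithful transposition, since all discrete analogues (H\"older monotonicity/continuity of $\cst{a}_h$ and $\cst{c}_h$, inf-sup stability and the Fortin property of $\cst{b}_h$, non-dissipativity of $\cst{c}_h$, discrete Korn and Sobolev embeddings) are already established. The step requiring the most care is the existence proof: unlike the continuous setting one cannot appeal to a Minty/monotone-operator theorem because $\cst{a}_h + \cst{c}_h$ is not monotone (the convective contribution $\cst{c}_h$ spoils monotonicity), so one must genuinely use finite dimensionality and a Brouwer-type statement; this in turn requires checking continuity of the discrete operator across the locus $\b w_h = \b 0$, which is harmless since $|\convec(\cdot,\b w_h)|\lesssim|\b w_h|^{\conv-1}$ by \eqref{eq:ass:conv:holder.cont}--\eqref{eq:ass:conv:0} makes the (apparently singular) second term of $\cst{c}_h$ dominated by $\chi_\cst{hc}|\b w_h|^{\conv-1}|\dgrad{k}{h}\bu w_h||\b v_h|$, and ensuring the a priori estimate is uniform in the candidate solution so that the ``inward-pointing on a large sphere'' hypothesis holds. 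A secondary, purely bookkeeping difficulty is to track the exponents carefully so that the pressure estimate comes out with the precise factor $\delta^{|\sob-2|(\sobs-1)}$ appearing in \eqref{eq:discrete.well-posedness:bounds:ph}.
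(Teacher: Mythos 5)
Your proposal is correct and follows the same approach as the paper's own (quite compressed) proof: the paper establishes existence and the a priori estimates by replacing $\cst{a}_h$ with $\cst{a}_h+\cst{c}_h$ in \cite[Theorem 11]{Botti.Castanon-Quiroz.ea:20} and using the non-dissipativity \eqref{eq:ch:non-dissip} of $\cst{c}_h$ together with the H\"older monotonicity \eqref{eq:ah:strong.mono} of $\cst{a}_h$ with $m=\sob$, and it proves uniqueness by transposing the continuous argument of Theorem \ref{thm:well-posedness} via the inf-sup stability \eqref{eq:bh:inf-sup}, the H\"older monotonicity \eqref{eq:ah:strong.mono} of $\cst{a}_h$ with $m=\sobs$, and the H\"older continuity \eqref{eq:ch:holder.cont} of $\cst{c}_h$. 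Your expansion of these steps --- testing with $\bu u_h$, a finite-dimensional Brouwer/topological-degree argument for existence, the inf-sup-based pressure bound, and the uniqueness chain of inequalities --- is exactly what those references unfold to.
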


\begin{proof}
  Replacing $\cst{a}_h$ by $\cst{a}_h+\cst{c}_h$ in the proof of \cite[Theorem 11]{Botti.Castanon-Quiroz.ea:20} and using the non-dissipativity \eqref{eq:ch:non-dissip} of $\cst{c}_h$, yields the existence of a solution to problem \eqref{eq:ns.discrete} and the a priori estimates \eqref{thm:discrete.well-posedness:bounds}, noticing that the H\"older monotonicity \eqref{eq:ah:strong.mono} of $\cst{a}_h$ with $m=\sob$ is the key property leveraged in the proof.
  Uniqueness of the solution under the above assumptions on $s$ and $r$ and the data smallness condition \eqref{eq:discrete.well-posedness:f} can be proved as its continuous counterpart in Theorem \ref{thm:well-posedness} leveraging the inf-sup stability \eqref{eq:bh:inf-sup} of $\cst{b}_h$, the H\"older monotonicity \eqref{eq:ah:strong.mono} of $\cst{a}_h$ with $m=\sobs$, and the H\"older continuity \eqref{eq:ch:holder.cont} of $\cst{c}_h$.
\end{proof}

We next state convergence results and error estimates.

\begin{theorem}[Convergence to minimal regularity solutions]\label{thm:convergence}
  Let ${((\bu u_h,p_h))_{h \in \mathcal H}}$ be a sequence of \\
   ${(\bdU{h,0}{k} \times \dP{h}{k})_{h \in \mathcal H}}$ such that, for all $h \in \mathcal H$, $(\bu u_h,p_h)$ solves \eqref{eq:ns.discrete}. Assume \eqref{eq:cons.ch:s:strict}, namely $\conv < \frac{\sob^*}{\sob'}$, i.e.,
    \begin{equation}\label{eq:intervals.s:convergence}
    s\in\begin{cases}
      \big(1,\frac{d(r-1)}{d-r}\big) & \text{if $d=2$ and $r\in(1,2)$ or $d=3$ and $r\in(1,3)$,}\\
      (1,\infty) & \text{if $d=2$ and $r\in[2,\infty)$ or $d=3$ and $r\in[3,\infty)$.}
    \end{cases}
  \end{equation}
  Then, under Assumptions \ref{ass:stress} and  \ref{ass:conv}, there exists $(\b u,p) \in \b U \times P$ solving  \eqref{eq:ns.weak} such that up to a subsequence,
\begin{itemize}[parsep=0pt,noitemsep]
\item \label{thm:convergence:velocity} $\b u_h \CV{h}{0} \b u$ strongly in $L^{[1,\sob^*)}(\Omega)^d$;
\item \label{thm:convergence:grad} $\dgrads{k}{h} \bu u_h \CV{h}{0} \GRADs \b u$ strongly in $L^\sob(\Omega)^{d\times d}$;
\item \label{thm:convergence:sh} $|\bu u_h|_{r,h}  \CV{h}{0} 0$;
\item \label{thm:convergence:pressure} $p_h \CV{h}{0} p$ strongly in $L^{\sob'}(\Omega)$.
\end{itemize}
Moreover, if the solution to \eqref{eq:ns.weak} is unique (cf. Theorem \ref{thm:well-posedness}), the convergences
extend to the whole sequence.
\end{theorem}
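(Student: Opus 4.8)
The plan is to run the compactness strategy that is by now standard for HHO approximations of monotone problems (the same machinery that underlies the existence part of Theorem~\ref{thm:well-posedness} and \cite{Botti.Castanon-Quiroz.ea:20,Di-Pietro.Droniou:20}); the only genuinely new ingredient is the passage to the limit in the power-like convective term, for which I would lean entirely on the properties of $\cst{c}_h$ collected in Lemma~\ref{lem:properties:ch}, in particular the sequential consistency \eqref{eq:ch:sequential.consistency}, whence the role of the strict condition \eqref{eq:cons.ch:s:strict}. I expect the delicate part to be the two nonlinear identification steps: making the Minty--Browder passage to the limit rigorous \emph{simultaneously} with controlling the convective contribution (which works precisely because the non-dissipativity $\cst{c}_h(\bu u_h,\bu u_h)=0$ lets one read off $\lim\cst{a}_h(\bu u_h,\bu u_h)$ cleanly), and choosing the right, \emph{non-degenerate} form of the H\"older monotonicity \eqref{eq:ah:strong.mono} so that the convergence of $\dgrads{k}{h}\bu u_h$ comes out strong even in the degenerate case $\delta=0$, $\sob\ne 2$; the pressure strong convergence is technical bookkeeping but otherwise routine.

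\emph{Compactness and extraction.} By Theorem~\ref{thm:discrete.well-posedness} the sequences $(\|\bu u_h\|_{\strain,\sob,h})_{h\in\mathcal H}$ and $(\|p_h\|_{L^{\sob'}(\Omega)})_{h\in\mathcal H}$ are bounded uniformly in $h$, hence so is $(|\bu u_h|_{\sob,h})_{h\in\mathcal H}$ by \eqref{eq:bound.res:stability.boundedness}. Using the discrete Rellich-type compactness for HHO velocity spaces together with the discrete Sobolev embeddings \eqref{eq:discrete.sob.emb} and the discrete Korn inequality \eqref{eq:discrete.Korn} (as in \cite{Di-Pietro.Droniou:20}), I would extract a subsequence along which $\b u_h\to\b u$ strongly in $L^{[1,\sob^*)}(\Omega)^d$ with $\b u\in\b U$, $\DIV\b u=0$ (from \eqref{eq:ns.discrete:mass} and \eqref{eq:bh:second.sequential.consistency}), $\dgrads{k}{h}\bu u_h\rightharpoonup\GRADs\b u$ weakly in $L^{\sob}(\Omega)^{d\times d}$, $p_h\rightharpoonup p$ weakly in $L^{\sob'}(\Omega)$ with $p\in P$, and, by the growth entailed by \eqref{eq:ass:stress:0}--\eqref{eq:ass:stress:holder.cont}, $\stress(\cdot,\dgrads{k}{h}\bu u_h)\rightharpoonup\b\sigma^\star$ weakly in $L^{\sob'}(\Omega)^{d\times d}$ for some $\Ms d$-valued field $\b\sigma^\star$.

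\emph{Limit problem and Minty identification.} Testing \eqref{eq:ns.discrete:momentum} with $\bu v_h=\bI{h}{k}\b\phi$, $\b\phi\in C_\cst{c}^\infty(\Omega)^d$, and letting $h\to0$: the data term converges by \eqref{eq:proj.seq.cons}, the principal viscous term to $\int_\Omega\b\sigma^\star:\GRADs\b\phi$ by weak--strong convergence and \eqref{eq:GT.seq.cons}, the stabilization to $0$ by \eqref{eq:sh:sequential.consistency}, the convective term to $c(\b u,\b\phi)$ by \eqref{eq:ch:sequential.consistency}, and the coupling term to $b(\b\phi,p)$ by \eqref{eq:bh:first.sequential.consistency}; density of $C_\cst{c}^\infty(\Omega)^d$ in $\b U$ and continuity of $c(\b u,\cdot)$, $b(\cdot,p)$ then give $\int_\Omega\b\sigma^\star:\GRADs\b v+c(\b u,\b v)+b(\b v,p)=\int_\Omega\b f\cdot\b v$ for all $\b v\in\b U$, together with $-b(\b u,q)=0$ for all $q\in P$. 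It remains to identify $\b\sigma^\star=\stress(\cdot,\GRADs\b u)$. Testing \eqref{eq:ns.discrete:momentum} with $\bu u_h$ and using \eqref{eq:ch:non-dissip} and \eqref{eq:ns.discrete:mass} gives $\cst{a}_h(\bu u_h,\bu u_h)=\int_\Omega\b f\cdot\b u_h\to\int_\Omega\b f\cdot\b u$, while testing the limit equation with $\b v=\b u$ and using \eqref{eq:c:zero} and $\DIV\b u=0$ gives $\int_\Omega\b f\cdot\b u=\int_\Omega\b\sigma^\star:\GRADs\b u$, so $\cst{a}_h(\bu u_h,\bu u_h)\to\int_\Omega\b\sigma^\star:\GRADs\b u$. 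Since \eqref{eq:ah:strong.mono} yields $\cst{a}_h(\bu u_h,\bu u_h-\bI{h}{k}\b\psi)-\cst{a}_h(\bI{h}{k}\b\psi,\bu u_h-\bI{h}{k}\b\psi)\ge0$ for every $\b\psi\in C_\cst{c}^\infty(\Omega)^d$, passing to the limit in its four terms (using the weak limits of $\dgrads{k}{h}\bu u_h$ and of $\stress(\cdot,\dgrads{k}{h}\bu u_h)$, the strong convergences $\dgrads{k}{h}\bI{h}{k}\b\psi\to\GRADs\b\psi$ and $\stress(\cdot,\dgrads{k}{h}\bI{h}{k}\b\psi)\to\stress(\cdot,\GRADs\b\psi)$ from \eqref{eq:ass:stress:holder.cont}, and $|\bI{h}{k}\b\psi|_{\sob,h}\to0$ with \eqref{eq:sh:holder.cont} to discard the stabilization) leaves $\int_\Omega(\b\sigma^\star-\stress(\cdot,\GRADs\b\psi)):(\GRADs\b u-\GRADs\b\psi)\ge0$; extending this to $\b\psi\in\b U$ by density, taking $\b\psi=\b u\mp t\b\xi$ with $\b\xi\in\b U$, dividing by $t$, and letting $t\downarrow0$ using the continuity of $\stress(\b x,\cdot)$ shows $\int_\Omega(\b\sigma^\star-\stress(\cdot,\GRADs\b u)):\GRADs\b\xi=0$ for all $\b\xi\in\b U$, hence $(\b u,p)$ solves \eqref{eq:ns.weak}.

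\emph{Strong convergences and conclusion.} To upgrade $\dgrads{k}{h}\bu u_h$ to strong convergence and get $|\bu u_h|_{\sob,h}\to0$, I would take $(\b\psi_n)_n\subset C_\cst{c}^\infty(\Omega)^d$ with $\b\psi_n\to\b u$ in $W^{1,\sob}(\Omega)^d$ and re-run \eqref{eq:ah:strong.mono} with $\bu w_h=\bI{h}{k}\b\psi_n$ and the non-degenerate exponent $m=\sob$ (so the factor $\delta^{\sob-m}$ equals $1$ whether or not $\sob\le2$), bounding the prefactor from below with the a priori bound of Theorem~\ref{thm:discrete.well-posedness}; this gives $\cst{a}_h(\bu u_h,\bu u_h-\bI{h}{k}\b\psi_n)-\cst{a}_h(\bI{h}{k}\b\psi_n,\bu u_h-\bI{h}{k}\b\psi_n)\gtrsim\|\bu u_h-\bI{h}{k}\b\psi_n\|_{\strain,\sob,h}^{\max(\sob,2)}$, whose left-hand side converges as $h\to0$ to $\int_\Omega(\stress(\cdot,\GRADs\b u)-\stress(\cdot,\GRADs\b\psi_n)):(\GRADs\b u-\GRADs\b\psi_n)$ and then to $0$ as $n\to\infty$ by \eqref{eq:ass:stress:holder.cont}; combining with \eqref{eq:Gh:boundedness}, \eqref{eq:norm.equiv}, the consistency \eqref{eq:Gh:consistency} for the fixed smooth $\b\psi_n$, and $|\bI{h}{k}\b\psi_n|_{\sob,h}\to0$ (a short estimate as in the proof of \eqref{eq:sh:sequential.consistency}) yields, after sending $h\to0$ then $n\to\infty$, $\dgrads{k}{h}\bu u_h\to\GRADs\b u$ strongly in $L^{\sob}(\Omega)^{d\times d}$ and $|\bu u_h|_{\sob,h}\to0$. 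Finally, $p_h\to p$ strongly follows from the inf-sup stability \eqref{eq:bh:inf-sup} applied to $p_h-\proj{h}{k}p\in\dP{h}{k}$: one expresses $\cst{b}_h(\bu v_h,p_h)$ from \eqref{eq:ns.discrete:momentum}, passes to the limit using the strong convergences just obtained (which give $\stress(\cdot,\dgrads{k}{h}\bu u_h)\to\stress(\cdot,\GRADs\b u)$ strongly in $L^{\sob'}$), the linearity of $\cst{c}_h$ and $\cst{b}_h$ in their second argument, and a weakly convergent subsequence extracted from the norm-one inf-sup test functions, and compares with the weak momentum equation, as in \cite{Botti.Castanon-Quiroz.ea:20,Di-Pietro.Droniou:20}. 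If \eqref{eq:ns.weak} has a unique solution the limit $(\b u,p)$ is independent of the subsequence, so a standard contradiction argument extends all convergences to the whole sequence.
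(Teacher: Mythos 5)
Your overall architecture—discrete compactness and extraction, Minty identification of the stress limit enabled by the non-dissipativity $\cst{c}_h(\bu u_h,\bu u_h)=0$, upgrade of the gradient to strong convergence via monotonicity, and pressure from the discrete momentum equation—is the same as the paper's, and the argument is correct, but you diverge from the paper in three places. (i) In the Minty step you test with $\GRADs\b\psi$, $\b\psi\in C_\cst{c}^\infty(\Omega)^d$, and pass to $\b\psi\in\b U$ by density; the paper works directly with arbitrary $\boldsymbol\Lambda\in L^\sob(\Omega,\Ms{d})$ and the pointwise H\"older monotonicity \eqref{eq:ass:stress:strong.mono}, avoiding density. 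Both are fine. (ii) For the strong $L^\sob$-convergence of $\dgrads{k}{h}\bu u_h$ and $|\bu u_h|_{\sob,h}\to0$ you run a density/diagonal argument with $\b\psi_n\to\b u$ in $W^{1,\sob}$ and \eqref{eq:ah:strong.mono} with $m=\sob$; the paper instead passes to the $\limsup$ in the pointwise monotonicity of $\stress$, separates the prefactor by a H\"older inequality (eq.~\eqref{eq:convergence:limit.0}), and reads $|\bu u_h|_{\sob,h}\to0$ off $\cst{s}_h(\bu u_h,\bu u_h)\to0$. Your route is more elementary; the paper's is shorter. (iii) For the pressure the paper does \emph{not} use the inf-sup stability \eqref{eq:bh:inf-sup}: it builds a Bogovskii-type test field $\b v_{p_h}$ with $\DIV\b v_{p_h}=|p_h|^{\sob'-2}p_h-p_{h,\Omega}$, uses the Fortin property \eqref{eq:bh:fortin} to transfer $b$ to $\cst b_h$, and closes with the power-framed monotonicity \eqref{eq:s.power:strong.mono} of $x\mapsto|x|^{\sob'-2}x$; so your citation of \cite{Botti.Castanon-Quiroz.ea:20} as if it used inf-sup is misleading. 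Your inf-sup route is nevertheless viable, but it is the sketchiest point: the sup-achieving $\bu v_h^\ast$ is an $h$-dependent discrete field, not $\bI_h^k\b\phi$ for a fixed smooth $\b\phi$, so the sequential consistency \eqref{eq:ch:sequential.consistency} does not apply verbatim to $\cst{c}_h(\bu u_h,\bu v_h^\ast)$; you must re-run its proof with the compact extraction of $\bu v_h^\ast$ (the paper does precisely this in \eqref{eq:convergence:vph.ch.c}, observing that only weak $L^\sob$-convergence of the reconstructed gradient of the second argument is needed), which you should say explicitly rather than bury in "linearity of $\cst c_h$ in the second argument".
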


\begin{proof}
  See Section \ref{proof:thm:convergence}.
\end{proof}

\begin{theorem}[Error estimate]\label{thm:error.estimate}
  Assume $\sob \le 2 \le \conv \le \frac{\sob^*}{\sob'}$ so that, in particular, the fluid is shear-thinning and
  \begin{equation}\label{eq:intervals.s:error.estimate}
    s\in\begin{cases}
    \big[2,\frac{d(r-1)}{d-r}\big] & \text{if $d=2$ and $r\in\big[\frac32,2\big)$ or $d=3$ and $r\in\big[\frac95,2\big]$,}
      \\
        [2,\infty) & \text{if $d=2$ and $r=2$}.
    \end{cases}
  \end{equation}
  Let $(\b u,p) \in \b U \times P$ and $(\bu u_h,p_h) \in \bdU{h,0}{k} \times \dP{h}{k}$ solve \eqref{eq:ns.weak} and \eqref{eq:ns.discrete}, respectively.  
  Assume the uniqueness of such solutions (which is verified, under \eqref{eq:intervals.s:error.estimate}, if the data smallness conditions \eqref{eq:well-posedness:f} and \eqref{eq:discrete.well-posedness:f} hold), and the additional regularity $\b u \in W^{k+2,\sob}(\T_h)^d \cap W^{k+1,s\sob'}(\T_h)^d$ (so that, in particular, $\b u\in W^{1,s\sob'}(\Omega)^d$), $\stress(\cdot,\GRADs \b u) \in W^{1,\sob'}(\Omega)^{d\times d} \cap W^{k+1,\sob'}(\T_h)^{d\times d}$, and $p \in W^{1,\sob'}(\Omega) \cap W^{k+1,\sob'}(\T_h)$.
  Let, furthermore, the following data smallness condition be verified:
  \begin{equation}\label{eq:small.f}
    \begin{aligned}
      \mathcal N_1 \coloneqq \delta^\sob+
      \left(\sigma_\cst{hm}^{-1}\| \b f \|_{L^{\sob'}(\Omega)^d}\right)^{r'} \le \left(1+C_\cst{v}^r+C_\cst{I}^rC_\cst{dv}^r\right)^{-1}\left(\frac{C_\cst{da}\sigma_\cst{hm}}{2C_{\cst{dc},\sob}\chi_\cst{hc}}\right)^\frac{r}{\conv+1-\sob}.
    \end{aligned}
  \end{equation}
  Then, under Assumptions \ref{ass:stress} and \ref{ass:conv}, it holds
  \begin{subequations}\label{eq:error.estimate}
    \begin{align}\label{eq:error.estimate:velocity}
      \| \bu u_h - \bI{h}{k} \b u \|_{\strain,\sob,h}
      &\lesssim 
      \sigma_\cst{hm}^{-1}\mathcal N_1^\frac{2-r}{r}\left(h^{(k+1)(\sob-1)}\min\left(\zeta_h(\b u);1\right)^{2-\sob} \mathcal N_2+h^{k+1} \mathcal N_3\right),
      \\
      \label{eq:error.estimate:pressure}
      \| p_h - \proj{h}{k} p \|_{L^{\sob'}(\Omega)}
      &\lesssim
      \sigma_\cst{hc}    
      \sigma_\cst{hm}^{1-r}\mathcal N_1^\frac{2-r}{r'}\left(h^{(k+1)(\sob-1)^2}\min\left(\zeta_h(\b u);1\right)^{(2-\sob)(r-1)} \mathcal N_2^{\sob-1}+h^{(k+1)(r-1)} \mathcal N_3^{\sob-1}\right) \\
      &\quad + \left(1+\chi_\cst{hc}\sigma_\cst{hm}^{-1}\mathcal N_1^\frac{s+1-r}{r}\right)\left(h^{(k+1)(\sob-1)}\min\left(\zeta_h(\b u);1\right)^{2-\sob} \mathcal N_2+h^{k+1} \mathcal N_3\right)\nonumber,
    \end{align}
  \end{subequations}
  with $\zeta_h$ introduced in \eqref{eq:ah:consistency} and where we have set, for the sake of brevity,
  \begin{gather*}
    \mathcal N_2
    \coloneq
    \sigma_\cst{hc}|\b u|_{W^{k+2,\sob}(\T_h)^d}^{\sob-1},\\
    \mathcal N_3
    \coloneq\begin{aligned}[t]
    &|\stress(\cdot,\GRADs \b u)|_{W^{k+1,\sob'}(\T_h)^{d\times d}}+ |p|_{W^{(k+1)(\sob-1),\sob'}(\T_h)} \\
    &\quad
    +|\b u|_{W^{k+1,s\sob'}(\T_h)^d}^s+\left(
    |\b u|_{W^{1,s\sob'}(\Omega)^d}^s
    + |\b u|_{W^{1,\sob}(\Omega)^d}^s
    \right)^\frac{\conv-1}{s}\left(
    |\b u|_{W^{k+1,s\sob'}(\T_h)^d} + |\b u|_{W^{k+2,\sob}(\T_h)^d}
    \right).
    \end{aligned}
  \end{gather*}
\end{theorem}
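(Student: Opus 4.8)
The plan is to run the usual monotone‑operator‑plus‑inf‑sup argument for HHO error estimates, taking advantage of the fact that the assumptions $\sob\le2\le\conv$ force $\sobs=\sob$ and $\convs=2$: then the H\"older monotonicity \eqref{eq:ah:strong.mono} of $\cst a_h$ with $m=\sob$ gives a \emph{quadratic} lower bound in the discrete error, while the H\"older continuity \eqref{eq:ch:holder.cont} of $\cst c_h$ with $m=\sob$ produces the \emph{same} quadratic power, so the convective contribution can be absorbed once the data are small. Set $\bu e_h\coloneq\bu u_h-\bI{h}{k}\b u\in\bdU{h,0}{k}$ (the inclusion uses $\b u\res{\partial\Omega}=\b0$). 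Testing the strong momentum balance \eqref{eq:ns.continuous:momentum} against $\b e_h$, subtracting \eqref{eq:ns.discrete:momentum} with $\bu v_h=\bu e_h$, and using $\cst b_h(\bu e_h,p_h)=0$ (from \eqref{eq:ns.discrete:mass}) and $\cst b_h(\bu e_h,\proj{h}{k}p)=0$ (because $\proj{h}{k}p\in\dP{h}{k}$, as $\proj{h}{k}$ preserves the zero‑mean constraint, while $\cst b_h(\bI{h}{k}\b u,\proj{h}{k}p)=b(\b u,\proj{h}{k}p)=0$ by the Fortin identity \eqref{eq:bh:fortin} and $\div\b u=0$), I obtain the error equation
\begin{equation*}
  \cst a_h(\bu u_h,\bu e_h)-\cst a_h(\bI{h}{k}\b u,\bu e_h)=\mathcal E_a+\mathcal E_c+\mathcal E_b+\bigl(\cst c_h(\bI{h}{k}\b u,\bu e_h)-\cst c_h(\bu u_h,\bu e_h)\bigr),
\end{equation*}
where $\mathcal E_a\coloneq-\int_\Omega(\DIV\stress(\cdot,\GRADs\b u))\cdot\b e_h-\cst a_h(\bI{h}{k}\b u,\bu e_h)$, $\mathcal E_c\coloneq\int_\Omega\NAV{\b u}{\convec(\cdot,\b u)}\cdot\b e_h-\cst c_h(\bI{h}{k}\b u,\bu e_h)$, and $\mathcal E_b\coloneq\int_\Omega\GRAD p\cdot\b e_h-\cst b_h(\bu e_h,\proj{h}{k}p)$ are exactly the defects controlled by the consistency estimates \eqref{eq:ah:consistency}, \eqref{eq:ch:consistency}, and \eqref{eq:bh:consistency} (the regularity hypotheses on $\b u$, $\stress(\cdot,\GRADs\b u)$, and $p$ guarantee that all integrals make sense and that these estimates apply).

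For the velocity, I would bound the left‑hand side from below by \eqref{eq:ah:strong.mono} with $m=\sob$, i.e.\ by $C_\cst{da}\sigma_\cst{hm}\bigl(\delta^r+\|\bu u_h\|_{\strain,\sob,h}^\sob+\|\bI{h}{k}\b u\|_{\strain,\sob,h}^\sob\bigr)^{\frac{\sob-2}{\sob}}\|\bu e_h\|_{\strain,\sob,h}^2$, and replace the prefactor by $\gtrsim\mathcal N_1^{\frac{\sob-2}{\sob}}$ using $\delta^r\le\mathcal N_1$, the a priori bounds \eqref{eq:discrete.well-posedness:bounds:uh} and \eqref{eq:well-posedness:bound:u} (the latter combined with the boundedness \eqref{eq:I:boundedness} of $\bI{h}{k}$), and the fact that $x\mapsto x^{(\sob-2)/\sob}$ is non‑increasing. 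On the right‑hand side, $\mathcal E_a$, $\mathcal E_c$, $\mathcal E_b$ are bounded by the three consistency estimates times $\|\bu e_h\|_{\strain,\sob,h}$, which after the simplifications $\sobs=\sob$, $\convs=2$ and after majorizing the $\b u$‑dependent prefactors therein collapse to $\lesssim\bigl(h^{(k+1)(\sob-1)}\min(\zeta_h(\b u);1)^{2-\sob}\mathcal N_2+h^{k+1}\mathcal N_3\bigr)\|\bu e_h\|_{\strain,\sob,h}$, while the convective term is bounded via \eqref{eq:ch:holder.cont} with $m=\sob$ by $\lesssim\chi_\cst{hc}\mathcal N_1^{\frac{\conv-1}{r}}\|\bu e_h\|_{\strain,\sob,h}^2$ (same a priori bounds). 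The threshold in \eqref{eq:small.f} is calibrated precisely so that $C_{\cst{dc},\sob}\chi_\cst{hc}\mathcal N_1^{\frac{\conv+1-\sob}{r}}\le\tfrac12C_\cst{da}\sigma_\cst{hm}$ up to the constant $1+C_\cst{v}^r+C_\cst{I}^rC_\cst{dv}^r$ produced by the prefactor majorizations, which lets me absorb the convective term into the left‑hand side. Dividing by $\sigma_\cst{hm}\mathcal N_1^{\frac{\sob-2}{\sob}}\|\bu e_h\|_{\strain,\sob,h}$ then gives \eqref{eq:error.estimate:velocity}.

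For the pressure, I would start from the inf‑sup stability \eqref{eq:bh:inf-sup} of $\cst b_h$ applied to $p_h-\proj{h}{k}p\in\dP{h}{k}$: for $\bu v_h\in\bdU{h,0}{k}$ with $\|\bu v_h\|_{\strain,\sob,h}=1$, extracting $\cst b_h(\bu v_h,p_h)$ from \eqref{eq:ns.discrete:momentum} and testing \eqref{eq:ns.continuous:momentum} against $\b v_h$ yields
\begin{multline*}
  \cst b_h(\bu v_h,p_h-\proj{h}{k}p)=\mathcal E_a'+\mathcal E_c'+\mathcal E_b'\\
  {}+\bigl(\cst a_h(\bI{h}{k}\b u,\bu v_h)-\cst a_h(\bu u_h,\bu v_h)\bigr)+\bigl(\cst c_h(\bI{h}{k}\b u,\bu v_h)-\cst c_h(\bu u_h,\bu v_h)\bigr),
\end{multline*}
with primed defects bounded as in the velocity part. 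The $\cst a_h$‑difference is controlled by \eqref{eq:ah:holder.cont}, which (since $\sobs=\sob$) gives $\lesssim\sigma_\cst{hc}\|\bu e_h\|_{\strain,\sob,h}^{\sob-1}$, and the $\cst c_h$‑difference by \eqref{eq:ch:holder.cont}, which gives $\lesssim\chi_\cst{hc}\mathcal N_1^{\frac{\conv-1}{r}}\|\bu e_h\|_{\strain,\sob,h}$. Inserting the already‑proved velocity bound \eqref{eq:error.estimate:velocity} into these two terms, using $(a+b)^{\sob-1}\lesssim a^{\sob-1}+b^{\sob-1}$ together with the identities $\frac{(2-r)(\sob-1)}{r}=\frac{2-r}{r'}$ and $\frac{\conv-1}{r}+\frac{2-r}{r}=\frac{\conv+1-r}{r}$, reproduces the two lines of \eqref{eq:error.estimate:pressure}; the ``$1$'' in the factor $1+\chi_\cst{hc}\sigma_\cst{hm}^{-1}\mathcal N_1^{(\conv+1-r)/r}$ accounts for the direct contribution of $\mathcal E_a'$, $\mathcal E_c'$, $\mathcal E_b'$.

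The skeleton above is routine; the real difficulty is the quantitative bookkeeping. First, the degenerate viscous consistency estimate \eqref{eq:ah:consistency} must be used together with its sharp $\min(\zeta_h(\b u);1)$ factor (the refinement borrowed from \cite{Di-Pietro.Droniou.ea:21}), which is what generates the whole family of orders between $h^{(k+1)(\sob-1)}$ and $h^{k+1}$; matching it against the nonlinear coercivity forces one to track every power of $\mathcal N_1$ through the monotonicity/continuity chain, and for the pressure estimate it forces one to invoke the viscous and pressure consistency estimates at the reduced order $h^{(k+1)(\sob-1)}$ and hence with non‑integer Sobolev seminorms — this is the origin of the term $|p|_{W^{(k+1)(\sob-1),\sob'}(\T_h)}$ in $\mathcal N_3$ — which is legitimate by the non‑integer version of the approximation properties \eqref{eq:proj:app}. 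Second, the absorption step is purely quantitative: one must keep the chain of inequalities tight enough that the data‑smallness threshold which emerges is exactly \eqref{eq:small.f}, consistently with the uniqueness thresholds of Theorems \ref{thm:well-posedness} and \ref{thm:discrete.well-posedness}.
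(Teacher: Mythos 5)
Your proposal is correct and follows essentially the same route as the paper's proof: define the consistency error (you split it into $\mathcal E_a+\mathcal E_c+\mathcal E_b$, the paper bundles it into a single linear form $\mathcal E_h$, but the two coincide after rearranging the error equation), lower-bound the viscous difference by the H\"older monotonicity \eqref{eq:ah:strong.mono} with $m=\sob$, upper-bound the convective difference by the H\"older continuity \eqref{eq:ch:holder.cont} with $m=\sob$, absorb the latter via the data smallness \eqref{eq:small.f} using the a priori bounds \eqref{eq:well-posedness:bound:u}, \eqref{eq:discrete.well-posedness:bounds:uh}, \eqref{eq:I:boundedness}, and close the pressure estimate by inf-sup \eqref{eq:bh:inf-sup} plus the H\"older continuities of $\cst a_h$ and $\cst c_h$. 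The quantitative bookkeeping you outline (exploiting $\sobs=\sob$, $\convs=2$, the identities $\tfrac{(2-r)(r-1)}{r}=\tfrac{2-r}{r'}$ and $\tfrac{s-1}{r}+\tfrac{2-r}{r}=\tfrac{s+1-r}{r}$, and $(a+b)^{r-1}\lesssim a^{r-1}+b^{r-1}$) is exactly what the paper carries out in Steps 2 and 3.
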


\begin{proof}
  See Section \ref{proof:thm:error.estimate}.
\end{proof}

\begin{remark}[Orders of convergence]\label{rem:ocv}
  From \eqref{eq:error.estimate}, neglecting higher-order terms, we infer asymptotic convergence rates of $\mathcal O_\cst{vel}^k \in [(k+1)(\sob-1),k+1]$ for the velocity, and $\mathcal O_\cst{pre}^k \in [(k+1)(\sob-1)^2,(k+1)(\sob-1)]$ for the pressure, according to the dimensionless number $\zeta_h(\b u)$.
  Notice that, owing to the presence of higher-order terms in the right-hand sides of \eqref{eq:error.estimate}, higher convergence rates may be observed in practice before attaining the asymptotic ones.
  
  In the case $s=2$, the error estimate given in \cite[Theorem 3.1]{Hirn:13} for the approximation of the $p$-Stokes equations with conforming finite elements (to be compared with the case $k=0$ in the present work) gives an order of convergence of the velocity coinciding with our upper bound irrespectively of the degeneracy of the problem.
  This difference with respect to conforming finite elements had already been observed in the context of the $p$-Laplacian, cf. \cite[Remark 3.3]{Di-Pietro.Droniou:17*1}, with improvements on the original HHO estimate recently made in \cite{Di-Pietro.Droniou.ea:21}. On the other hand, the order of convergence for the pressure given by \cite[Theorem 3.1]{Hirn:13} seems higher than the one derived in the present work.
  This point will make the object of future investigations.
\end{remark}


\section{Numerical examples}\label{sec:num.res}

The method \eqref{eq:ns.discrete} was implemented within the SpaFEDTe library (cf. \url{https://spafedte.github.io}). 
We used a Picard method for the solution of the nonlinear algebraic problem corresponding to the HHO discretization with a tolerance of $10^{-10}$. The linear systems at each iteration were solved using the sparse direct solver PardisoLU.
In this section we present a numerical validation including a verification of the convergence rates in dimensions $d=2$ and $d=3$, as well as the more physical two-dimensional lid-driven cavity problem.

\subsection{Numerical verification of the convergence rates}\label{sec:num.res.trigo}

We consider manufactured solutions of problem \eqref{eq:ns.continuous} with diffusion law corresponding to the $(1,1,\sob,\sob)$-Carreau--Yasuda model \eqref{eq:Carreau--Yasuda} and convection law given by the $(1,s)$-Laplace formula \eqref{eq:Laplace}.
The corresponding Sobolev exponents are the couples $(\sob,\conv)\in \left\{\frac32,\frac95,2,\frac52,3\right\}^2$ that match the condition $s \le \frac{r^*}{r'}$.
The volumetric load $\b f$ and the Dirichlet boundary conditions are inferred from the exact solution.
Polynomial degrees ranging from 1 to 3 are considered.
For each value of $d\in \{2,3\}$, we let $\Omega=(0,1)^d$, and consider the exact velocity $\b u$ and pressure $p$ such that, for all $\b x = (x_i)_{1\le i \le d} \in \Omega$,
\begin{equation}
  \b u(\b x) = \left(\prod_{j=1,j\ne i}^d
  \sin\left(\tfrac{\pi}{2}x_j\right)\right)_{1\le i \le d}\quad \text{and} \quad 
  p(\b x) = \prod_{i=1}^d\sin\left(\tfrac{\pi}{2} x_i\right)-\tfrac{2^d}{\pi^d}.
\end{equation}
We consider the HHO scheme on distorted triangular (if $d=2$) and cubic (if $d=3$) mesh families.
In Figure \ref{tab:num.res} we display detailed convergence results for the case $\frac{r^*}{r'} = 2$, i.e. $r = \frac{3}{2}$ if $d=2$ and $r = \frac{9}{5}$  if $d=3$.
Table \ref{table:OCV} collects the asymptotic convergence rates predicted by Theorem \ref{thm:error.estimate} with the interval for $s$ in which the assumptions of Theorem \ref{thm:error.estimate} hold; when the interval is empty, we display the asymptotic convergence rates given by \cite[Theorem 12]{Botti.Castanon-Quiroz.ea:20} for the (generalized) Stokes problem, i.e. the same convergence rates if $r \le 2$ and $O_\cst{vel}^k = O_\cst{pre}^k = \frac{k+1}{r-1}$ otherwise.
When $r < 2$, the convergence rates are expected over an interval depending on the degeneracy of the problem (cf. \cite[Theorem 11]{Di-Pietro.Droniou.ea:21}); since $\delta = 1$, the expected convergence rates should correspond to the maximum of these intervals.
In Tables \ref{table:2D} and \ref{table:3D} we provide an overview of the experimental convergence rates obtained for $d=2$ and $d=3$, respectively.
Overall, the results are in agreement with the theoretical predictions.
The expected asymptotic orders of convergence of the pressure are exceeded for $r < 2$, where the experimental convergence rates is closer to $k + 1$ (i.e., the same rate as the velocity). 
One explanation could be the partial nature of the error estimate \eqref{eq:error.estimate:pressure} involving only the $L^2$-orthogonal projection of the continuous solution. Should this behaviour be confirmed by further numerical evidence, it could suggest that the error estimate for the pressure can be improved.
When the assumption $r \le 2 \le s$ of Theorem \ref{thm:error.estimate} is not met, the convergence rates seem to coincide with the predictions of \cite[Theorem 12]{Botti.Castanon-Quiroz.ea:20} for the corresponding Stokes problem with no convective terms when $s \ge 2$.
However, in the case $s < 2$, we notice that the presence of the convective term seems to influence the convergence rates of the velocity and the pressure which are often lower than expected. This could be explained by the consistency \eqref{eq:ch:consistency} of $\cst{c}_h$, which involves a term in $h^{(k+1)(s-1)}$ when $s < 2$.
While the assumption $r \le 2 \le s$ seems necessary to obtain estimates of the convergence rates in the present setting, we do not exclude that it could be lifted using different techniques (notice that convergence is guaranteed by Theorem \ref{thm:convergence} under significantly milder assumptions).

\begin{table}
  \caption{Asymptotic convergence rates predicted by Theorem \ref{thm:error.estimate} (cf. Remark \ref{rem:ocv}) according to $r$ and $k$.
    We also indicate, for the sake of completeness, the interval for $s$ in which the assumptions of Theorem \ref{thm:error.estimate} hold; when the interval is empty, the asymptotic convergence rates correspond to the predictions of \cite[Theorem 12]{Botti.Castanon-Quiroz.ea:20} valid for the corresponding generalized Stokes problem.}
\label{table:OCV}
\begin{center}
\begin{tabular}{cc|cc|cc|cc|cc|cc}
\toprule
\multicolumn{2}{c|}{$r$} & \multicolumn{2}{c|}{$\frac32$} & \multicolumn{2}{c|}{$\frac95$} & \multicolumn{2}{c|}{2} & \multicolumn{2}{c|}{$\frac52$} & \multicolumn{2}{c}{3} \\[2pt]
\hline
\multirow{2}{*}{$s$} & $d=2$ & \multicolumn{2}{c|}{$2$} & \multicolumn{2}{c|}{$[2,8]$} & \multicolumn{2}{c|}{$[2,\infty)$}  & \multicolumn{2}{c|}{$\emptyset$}  & \multicolumn{2}{c}{$\emptyset$}  \\
  & $d=3$ & \multicolumn{2}{c|}{$\emptyset$} & \multicolumn{2}{c|}{$2$} & \multicolumn{2}{c|}{$[2,3]$}  & \multicolumn{2}{c|}{$\emptyset$}  & \multicolumn{2}{c}{$\emptyset$}  \\
\hline
\multicolumn{2}{c|}{$k$}  & $\mathcal O_\cst{vel}^k$ & $\mathcal O_\cst{pre}^k$ & $\mathcal O_\cst{vel}^k$ & $\mathcal O_\cst{pre}^k$ & $\mathcal O_\cst{vel}^k$ & $\mathcal O_\cst{pre}^k$ & $\mathcal O_\cst{vel}^k$ & $\mathcal O_\cst{pre}^k$ & $\mathcal O_\cst{vel}^k$ & $\mathcal O_\cst{pre}^k$   \\[2pt]
\hline
\multicolumn{2}{c|}{$1$}  & $[1,2]$  & $\big[\frac{1}{2},1\big]$ & $\big[\frac{8}{5},2\big]$  & $\big[\frac{32}{25},\frac{8}{5}\big]$ & $2$  & $2$ & $\frac{4}{3}$ & $\frac{4}{3}$ & $1$ & $1$ \\[2pt]
\hline
\multicolumn{2}{c|}{$2$}  & $\big[\frac{3}{2},3\big]$  & $\big[\frac{3}{4},\frac{3}{2}\big]$ & $\big[\frac{12}{5},3\big]$  & $\big[\frac{48}{25},\frac{12}{5}\big]$ & $3$  & $3$ & $2$  & $2$ & $\frac{3}{2}$  & $\frac{3}{2}$ \\[2pt]
\hline
\multicolumn{2}{c|}{$3$}  & $[2,4]$  & $[1,2]$  & $\big[\frac{16}{5},4\big]$  & $\big[\frac{64}{25},\frac{16}{5}\big]$ & $4$  & $4$ & $\frac{8}{3}$ & $\frac{8}{3}$ & $2$ & $2$ \\[2pt]
\bottomrule
\end{tabular}
\end{center}
\end{table}

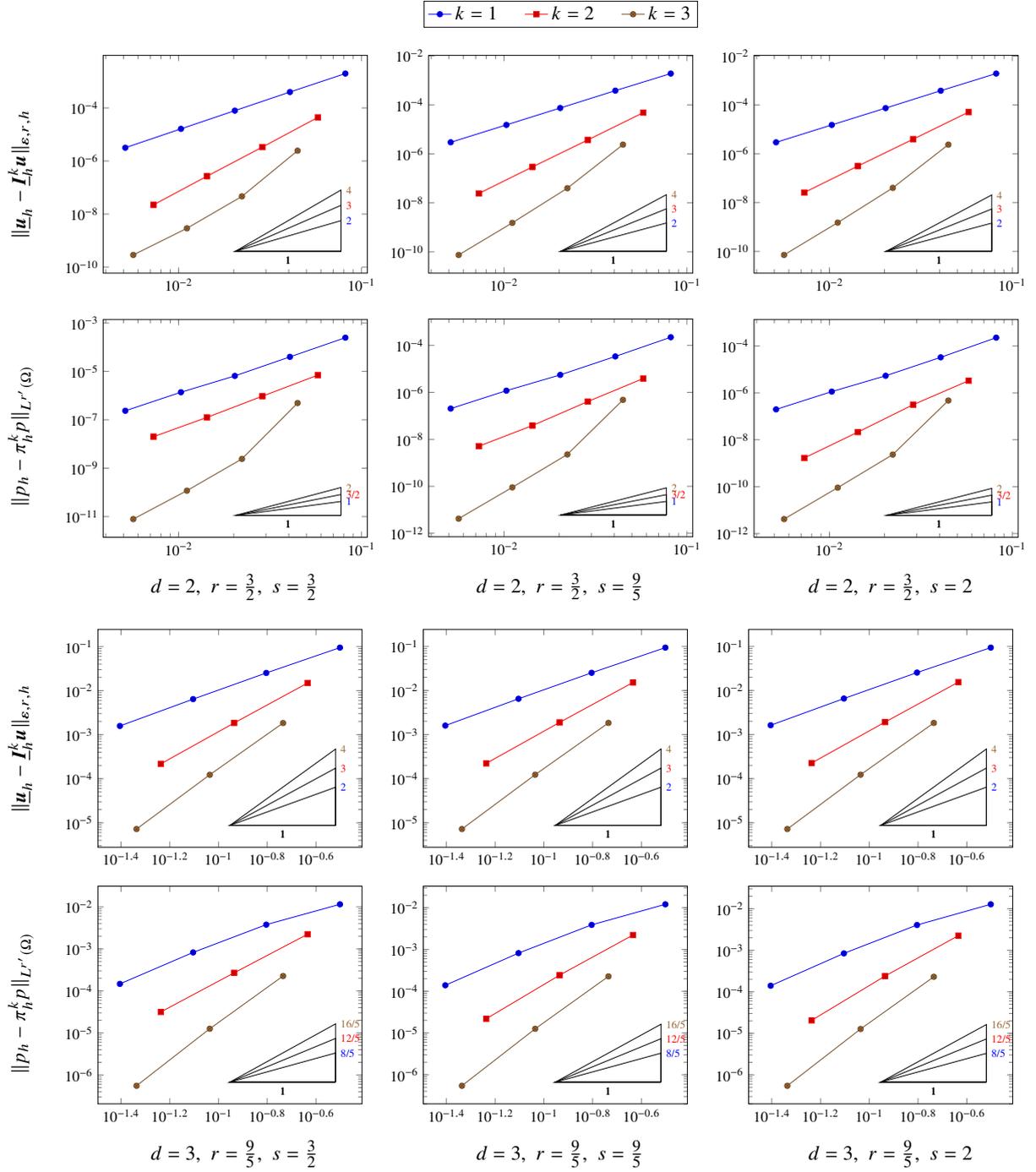
\begin{figure}
  \begin{center}
    \begin{minipage}[b]{0.04\columnwidth}
      \begin{tikzpicture}
      	\draw node[scale=0.8,rotate=90]{\hspace*{0.9cm} $\| \bu u_h - \bI{h}{k} \b u \|_{\strain,\sob,h}$};
      \end{tikzpicture}
    \end{minipage}
    \hfill
    \begin{minipage}[b]{0.3\columnwidth}
      \begin{tikzpicture}[scale=0.6]
        \begin{loglogaxis}
          \addplot table[x=meshsize,y=erru] {results/trigo2D/distTriMeshes/k1_del1_a150_r150_s150.dat};
          \addplot table[x=meshsize,y=erru] {results/trigo2D/distTriMeshes/k2_del1_a150_r150_s150.dat};
          \addplot table[x=meshsize,y=erru] {results/trigo2D/distTriMeshes/k3_del1_a150_r150_s150.dat};
          \logLogSlopeTriangle{0.90}{0.4}{0.1}{2}{blue};
          \logLogSlopeTriangle{0.90}{0.4}{0.1}{3}{red};
          \logLogSlopeTriangle{0.90}{0.4}{0.1}{4}{darkbrown};
        \end{loglogaxis}
      \end{tikzpicture}
    \end{minipage}
    \hfill
    \begin{minipage}[b]{0.30\columnwidth}
      \begin{tikzpicture}[scale=0.6]
        \begin{loglogaxis}[
            legend style = { 
              at={(0.5,1.25)},
              anchor = north,
              legend columns=-1,
              font={\Large}
            }
          ]
          \addplot table[x=meshsize,y=erru] {results/trigo2D/distTriMeshes/k1_del1_a150_r150_s180.dat};
          \addplot table[x=meshsize,y=erru] {results/trigo2D/distTriMeshes/k2_del1_a150_r150_s180.dat};
          \addplot table[x=meshsize,y=erru] {results/trigo2D/distTriMeshes/k3_del1_a150_r150_s180.dat};
          \logLogSlopeTriangle{0.90}{0.4}{0.1}{2}{blue};
          \logLogSlopeTriangle{0.90}{0.4}{0.1}{3}{red};
          \logLogSlopeTriangle{0.90}{0.4}{0.1}{4}{darkbrown};
          \legend{$k=1$ \quad\quad, $k=2$ \quad\quad, $k=3$};
        \end{loglogaxis}
      \end{tikzpicture}
    \end{minipage}
    \hfill
    \begin{minipage}[b]{0.31\columnwidth}
      \begin{tikzpicture}[scale=0.6]
        \begin{loglogaxis}
          \addplot table[x=meshsize,y=erru] {results/trigo2D/distTriMeshes/k1_del1_a150_r150_s200.dat};
          \addplot table[x=meshsize,y=erru] {results/trigo2D/distTriMeshes/k2_del1_a150_r150_s200.dat};
          \addplot table[x=meshsize,y=erru] {results/trigo2D/distTriMeshes/k3_del1_a150_r150_s200.dat};
          \logLogSlopeTriangle{0.90}{0.4}{0.1}{2}{blue};
          \logLogSlopeTriangle{0.90}{0.4}{0.1}{3}{red};
          \logLogSlopeTriangle{0.90}{0.4}{0.1}{4}{darkbrown};
        \end{loglogaxis}
      \end{tikzpicture}
    \end{minipage}
  \\
  \medskip
    \begin{minipage}[b]{0.04\columnwidth}
      \begin{tikzpicture}
      	\draw node[scale=0.8,rotate=90]{\hspace*{1.7cm}$\| p_h - \proj{h}{k} p \|_{L^{\sob'}(\Omega)}$};
      \end{tikzpicture}
    \end{minipage}
    \hfill
    \begin{minipage}[b]{0.3\columnwidth}
      \begin{tikzpicture}[scale=0.6]
        \begin{loglogaxis}
          \addplot table[x=meshsize,y=errp] {results/trigo2D/distTriMeshes/k1_del1_a150_r150_s150.dat};
          \addplot table[x=meshsize,y=errp] {results/trigo2D/distTriMeshes/k2_del1_a150_r150_s150.dat};
          \addplot table[x=meshsize,y=errp] {results/trigo2D/distTriMeshes/k3_del1_a150_r150_s150.dat};
          \logLogSlopeTriangle{0.90}{0.4}{0.1}{1}{blue};
          \logLogSlopeTriangle{0.90}{0.4}{0.1}{3/2}{red};
          \logLogSlopeTriangle{0.90}{0.4}{0.1}{2}{darkbrown};
        \end{loglogaxis}
      \end{tikzpicture}
      \subcaption*{\quad\quad $d=2$,\ \ $r = \frac{3}{2}$,\ \ $s=\frac{3}{2}$}
    \end{minipage}
    \hfill
    \begin{minipage}[b]{0.3\columnwidth}
      \begin{tikzpicture}[scale=0.6]
        \begin{loglogaxis}
          \addplot table[x=meshsize,y=errp] {results/trigo2D/distTriMeshes/k1_del1_a150_r150_s180.dat};
          \addplot table[x=meshsize,y=errp] {results/trigo2D/distTriMeshes/k2_del1_a150_r150_s180.dat};
          \addplot table[x=meshsize,y=errp] {results/trigo2D/distTriMeshes/k3_del1_a150_r150_s180.dat};
          \logLogSlopeTriangle{0.90}{0.4}{0.1}{1}{blue};
          \logLogSlopeTriangle{0.90}{0.4}{0.1}{3/2}{red};
          \logLogSlopeTriangle{0.90}{0.4}{0.1}{2}{darkbrown};
        \end{loglogaxis}
      \end{tikzpicture}
      \subcaption*{\quad\quad $d=2$,\ \ $r = \frac{3}{2}$,\ \ $s=\frac{9}{5}$}
    \end{minipage}
    \hfill
    \begin{minipage}[b]{0.31\columnwidth}
      \begin{tikzpicture}[scale=0.6]
        \begin{loglogaxis}
          \addplot table[x=meshsize,y=errp] {results/trigo2D/distTriMeshes/k1_del1_a150_r150_s200.dat};
          \addplot table[x=meshsize,y=errp] {results/trigo2D/distTriMeshes/k2_del1_a150_r150_s200.dat};
          \addplot table[x=meshsize,y=errp] {results/trigo2D/distTriMeshes/k3_del1_a150_r150_s200.dat};
          \logLogSlopeTriangle{0.90}{0.4}{0.1}{1}{blue};
          \logLogSlopeTriangle{0.90}{0.4}{0.1}{3/2}{red};
          \logLogSlopeTriangle{0.90}{0.4}{0.1}{2}{darkbrown};
        \end{loglogaxis}
      \end{tikzpicture}
      \subcaption*{\quad\quad $d=2$,\ \ $r = \frac{3}{2}$,\ \ $s=2$}
    \end{minipage}
  \end{center}
  \begin{center}
    \begin{minipage}[b]{0.04\columnwidth}
      \begin{tikzpicture}
      	\draw node[scale=0.8,rotate=90]{\hspace*{0.9cm} $\| \bu u_h - \bI{h}{k} \b u \|_{\strain,\sob,h}$};
      \end{tikzpicture}
    \end{minipage}
    \hfill
    \begin{minipage}[b]{0.3\columnwidth}
      \begin{tikzpicture}[scale=0.6]
        \begin{loglogaxis}
          \addplot table[x=meshsize,y=erru] {results/trigo3D/cubeMeshes/k1_del1_a180_r180_s150.dat};
          \addplot table[x=meshsize,y=erru] {results/trigo3D/cubeMeshes/k2_del1_a180_r180_s150.dat};
          \addplot table[x=meshsize,y=erru] {results/trigo3D/cubeMeshes/k3_del1_a180_r180_s150.dat};
          \logLogSlopeTriangle{0.90}{0.4}{0.1}{2}{blue};
          \logLogSlopeTriangle{0.90}{0.4}{0.1}{3}{red};
          \logLogSlopeTriangle{0.90}{0.4}{0.1}{4}{darkbrown};
        \end{loglogaxis}
      \end{tikzpicture}
    \end{minipage}
    \hfill
    \begin{minipage}[b]{0.30\columnwidth}
      \begin{tikzpicture}[scale=0.6]
        \begin{loglogaxis}[
            legend style = { 
              at={(0.5,1.25)},
              anchor = north,
              legend columns=-1,
              font={\Large}
            }
          ]
          \addplot table[x=meshsize,y=erru] {results/trigo3D/cubeMeshes/k1_del1_a180_r180_s180.dat};
          \addplot table[x=meshsize,y=erru] {results/trigo3D/cubeMeshes/k2_del1_a180_r180_s180.dat};
          \addplot table[x=meshsize,y=erru] {results/trigo3D/cubeMeshes/k3_del1_a180_r180_s180.dat};
          \logLogSlopeTriangle{0.90}{0.4}{0.1}{2}{blue};
          \logLogSlopeTriangle{0.90}{0.4}{0.1}{3}{red};
          \logLogSlopeTriangle{0.90}{0.4}{0.1}{4}{darkbrown};
        \end{loglogaxis}
      \end{tikzpicture}
    \end{minipage}
    \hfill
    \begin{minipage}[b]{0.31\columnwidth}
      \begin{tikzpicture}[scale=0.6]
        \begin{loglogaxis}
          \addplot table[x=meshsize,y=erru] {results/trigo3D/cubeMeshes/k1_del1_a180_r180_s200.dat};
          \addplot table[x=meshsize,y=erru] {results/trigo3D/cubeMeshes/k2_del1_a180_r180_s200.dat};
          \addplot table[x=meshsize,y=erru] {results/trigo3D/cubeMeshes/k3_del1_a180_r180_s200.dat};
          \logLogSlopeTriangle{0.90}{0.4}{0.1}{2}{blue};
          \logLogSlopeTriangle{0.90}{0.4}{0.1}{3}{red};
          \logLogSlopeTriangle{0.90}{0.4}{0.1}{4}{darkbrown};
        \end{loglogaxis}
      \end{tikzpicture}
    \end{minipage}
  \\
  \medskip
    \begin{minipage}[b]{0.04\columnwidth}
      \begin{tikzpicture}
      	\draw node[scale=0.8,rotate=90]{\hspace*{1.7cm}$\| p_h - \proj{h}{k} p \|_{L^{\sob'}(\Omega)}$};
      \end{tikzpicture}
    \end{minipage}
    \hfill
    \begin{minipage}[b]{0.3\columnwidth}
      \begin{tikzpicture}[scale=0.6]
        \begin{loglogaxis}
          \addplot table[x=meshsize,y=errp] {results/trigo3D/cubeMeshes/k1_del1_a180_r180_s150.dat};
          \addplot table[x=meshsize,y=errp] {results/trigo3D/cubeMeshes/k2_del1_a180_r180_s150.dat};
          \addplot table[x=meshsize,y=errp] {results/trigo3D/cubeMeshes/k3_del1_a180_r180_s150.dat};
          \logLogSlopeTriangle{0.90}{0.4}{0.1}{8/5}{blue};
          \logLogSlopeTriangle{0.90}{0.4}{0.1}{12/5}{red};
          \logLogSlopeTriangle{0.90}{0.4}{0.1}{16/5}{darkbrown};
        \end{loglogaxis}
      \end{tikzpicture}
      \subcaption*{\quad\quad $d=3$,\ \ $r = \frac95$,\ \ $s=\frac32$}
    \end{minipage}
    \hfill
    \begin{minipage}[b]{0.3\columnwidth}
      \begin{tikzpicture}[scale=0.6]
        \begin{loglogaxis}
          \addplot table[x=meshsize,y=errp] {results/trigo3D/cubeMeshes/k1_del1_a180_r180_s180.dat};
          \addplot table[x=meshsize,y=errp] {results/trigo3D/cubeMeshes/k2_del1_a180_r180_s180.dat};
          \addplot table[x=meshsize,y=errp] {results/trigo3D/cubeMeshes/k3_del1_a180_r180_s180.dat};
          \logLogSlopeTriangle{0.90}{0.4}{0.1}{8/5}{blue};
          \logLogSlopeTriangle{0.90}{0.4}{0.1}{12/5}{red};
          \logLogSlopeTriangle{0.90}{0.4}{0.1}{16/5}{darkbrown};
        \end{loglogaxis}
      \end{tikzpicture}
      \subcaption*{\quad\quad $d=3$,\ \ $r = \frac95$,\ \ $s=\frac{9}{5}$}
    \end{minipage}
    \hfill
    \begin{minipage}[b]{0.31\columnwidth}
      \begin{tikzpicture}[scale=0.6]
        \begin{loglogaxis}
          \addplot table[x=meshsize,y=errp] {results/trigo3D/cubeMeshes/k1_del1_a180_r180_s200.dat};
          \addplot table[x=meshsize,y=errp] {results/trigo3D/cubeMeshes/k2_del1_a180_r180_s200.dat};
          \addplot table[x=meshsize,y=errp] {results/trigo3D/cubeMeshes/k3_del1_a180_r180_s200.dat};
          \logLogSlopeTriangle{0.90}{0.4}{0.1}{8/5}{blue};
          \logLogSlopeTriangle{0.90}{0.4}{0.1}{12/5}{red};
          \logLogSlopeTriangle{0.90}{0.4}{0.1}{16/5}{darkbrown};
        \end{loglogaxis}
      \end{tikzpicture}
      \subcaption*{\quad\quad $d=3$,\ \ $r = \frac95$,\ \ $s=2$}
    \end{minipage}
  \end{center}
   \caption{Numerical results for the test cases of Section \ref{sec:num.res.trigo} where $\frac{r^*}{r'} = 2$. The slopes indicate the convergence rates expected from Theorem \ref{thm:error.estimate} when $s=2$, and \cite[Theorem 12]{Botti.Castanon-Quiroz.ea:20} otherwise.
   \label{tab:num.res}}
\end{figure}

\begin{table}\setlength{\tabcolsep}{5pt}
\caption{Convergence rates of the numerical tests of Section \ref{sec:num.res.trigo} with $d=2$ and $r \in \big\{\frac{9}{5},2,\frac52\big\}$.}
\label{table:2D}
\begin{small}
\begin{center}
\begin{tabular}{cc|cc|cc|cc|cc|cc}
\toprule
\multicolumn{12}{c}{$r = \frac{9}{5}$}\\[2pt]
\hline
\multicolumn{2}{c|}{$s$} & \multicolumn{2}{c|}{$\frac{3}{2}$} & \multicolumn{2}{c|}{$\frac{9}{5}$} & \multicolumn{2}{c|}{2} & \multicolumn{2}{c|}{$\frac52$}  & \multicolumn{2}{c}{3} \\[2pt]
\hline
$k$ & $h$ & $\b u$ & $p$  & $\b u$ & $p$  & $\b u$ & $p$  & $\b u$ & $p$  & $\b u$ & $p$  \\
\hline
\multirow{4}{*}{1} & 4.06e-02 & 2.06 & 1.95 & 2.07 & 1.99 & 2.09 & 2.06 & 2.14 & 2.19 & 2.18 & 2.17   \\
& 2.03e-02 & 2.11 & 2.32 & 2.12 & 2.34 & 2.13 & 2.35 & 2.14 & 2.33 & 2.14 & 2.29 \\
& 1.03e-02 & 2.06 & 1.99 & 2.05 & 1.97 & 2.05 & 1.96 & 2.05 & 1.99 & 2.04 & 2.04  \\
& 5.11e-03 & 2.12 & 2.23 & 2.13 & 2.21 & 2.13 & 2.20 & 2.14 & 2.16 & 2.15 & 2.16 \\
\hline
\multirow{3}{*}{2} & 4.06e-02 & 3.31 &  2.32 & 3.30 & 2.67 & 3.29 & 2.94 & 3.26 & 2.50 & 3.23 &  2.34   \\
& 2.03e-02 & 3.20 &  2.36 & 3.21 & 2.65 & 3.22 & 3.13 & 3.22 & 2.47 & 3.23 &  2.38  \\
& 1.03e-02 & 3.12 &  2.19 & 3.14 &  2.37 & 3.14 & 3.22 & 3.14 &  2.26 & 3.15 &  2.22 \\
\hline
\multirow{3}{*}{3} & 4.06e-02 & 4.50 & 6.13 & 4.77 & 6.42 & 4.74 & 6.42 & 4.64 & 6.29 & 4.49 & 6.07   \\
& 2.03e-02 &  3.31 & 3.36 & 4.20 & 3.71 & 4.25 & 3.72 & 4.29 & 3.76 & 4.32 & 3.83  \\
& 1.03e-02 &  2.83 &  3.09 & 4.05 & 3.87 & 4.21 & 3.93 & 4.21 & 3.99 & 4.22 & 4.07  \\
\toprule
\multicolumn{12}{c}{$r = 2$}\\[2pt]
\hline
\multicolumn{2}{c|}{$s$} & \multicolumn{2}{c|}{$\frac{3}{2}$} & \multicolumn{2}{c|}{$\frac{9}{5}$} & \multicolumn{2}{c|}{2} & \multicolumn{2}{c|}{$\frac52$}  & \multicolumn{2}{c}{3} \\[2pt]
\hline
$k$ & $h$ & $\b u$ & $p$  & $\b u$ & $p$  & $\b u$ & $p$  & $\b u$ & $p$  & $\b u$ & $p$  \\
\hline
\multirow{4}{*}{1} & 4.06e-02 &  1.95 &  1.72 &  1.96 &  1.74 &  1.97 &  1.79 & 2.04 &  1.90 & 2.07 &  1.94   \\
& 2.03e-02 & 2.01 & 2.09 & 2.01 & 2.10 & 2.01 & 2.11 & 2.01 & 2.13 & 2.00 & 2.10 \\
& 1.03e-02 &  1.95 &  1.88 &    1.94 &  1.88 &  1.93 &  1.87 &  1.93 &  1.86 &  1.92 &  1.88 \\
& 5.11e-03 & 2.01 & 2.03 & 2.01 & 2.02 & 2.01 & 2.01 & 2.01 &  1.98 & 2.01 &  1.97 \\
\hline
\multirow{3}{*}{2} & 4.06e-02 & 3.05 &  2.08 & 3.04 &  2.49 & 3.03 &   2.80 & 3.01 &  2.37 &  2.99 &  2.20 \\
& 2.03e-02 &  2.99 &  2.11 & 3.00 &  2.37 & 3.00 &  2.82 & 3.00 &  2.24 & 3.01 &  2.16 \\
& 1.03e-02 &  2.91 &  1.97 &  2.92 &  2.13 &  2.93 &  2.99 &  2.93 &  2.04 &  2.93 &  2.00 \\
\hline
\multirow{3}{*}{3} & 4.06e-02 &  3.83 &  3.55 & 4.07 &  3.80 & 4.08 &  3.87 & 4.09 & 4.00 & 4.09 & 4.06   \\
& 2.03e-02 &  3.11 &  3.15 &  3.86 & 3.90 & 3.89 & 3.90 & 3.90 & 3.80 & 3.91 & 3.73 \\
& 1.03e-02 &  2.66 & 2.63 & 3.80 & 3.71 & 3.92 & 3.92 & 3.90 & 3.96 & 3.88 & 3.98 \\
\toprule
\multicolumn{12}{c}{$r = \frac52$}\\[2pt]
\hline
\multicolumn{2}{c|}{$s$} & \multicolumn{2}{c|}{$\frac{3}{2}$} & \multicolumn{2}{c|}{$\frac{9}{5}$} & \multicolumn{2}{c|}{2} & \multicolumn{2}{c|}{$\frac52$}  & \multicolumn{2}{c}{3} \\[2pt]
\hline
$k$ & $h$ & $\b u$ & $p$  & $\b u$ & $p$  & $\b u$ & $p$  & $\b u$ & $p$  & $\b u$ & $p$  \\
\hline
\multirow{4}{*}{1} & 4.06e-02 & 1.68 & 1.43 & 1.79 & 1.47 & 1.79 & 1.49 & 1.79 & 1.53 & 1.80 & 1.55   \\
& 2.03e-02 & 1.64 & 1.58 & 1.78 & 1.61 & 1.78 & 1.62 & 1.76 & 1.63 & 1.75 & 1.64 \\
& 1.03e-02 & 1.50 & 1.59 & 1.67 & 1.60 & 1.66 & 1.59 & 1.65 & 1.57 & 1.64 & 1.55 \\
& 5.11e-03 & 1.46 & 1.60 & 1.65 & 1.61 & 1.65 & 1.61 & 1.66 & 1.61 & 1.67 & 1.61 \\
\hline
\multirow{3}{*}{2} & 4.06e-02 & 2.68 & 2.22 & 2.68 & 2.60 & 2.67 & 2.68 & 2.65 & 2.47 & 2.64 & 2.26   \\
& 2.03e-02 & 2.64 & 1.83 & 2.64 & 2.28 & 2.64 & 2.54 & 2.64 & 2.07 & 2.64 & 1.89 \\
& 1.03e-02 & 2.53 & 1.61 & 2.55 & 1.86 & 2.55 & 2.39 & 2.55 & 1.71 & 2.55 & 1.63 \\
\hline
\multirow{3}{*}{3} & 4.06e-02 & 3.62 & 3.59 & 3.65 & 3.62 & 3.65 & 3.62 & 3.65 & 3.61 & 3.64 & 3.59   \\
& 2.03e-02 & 2.97 & 2.78 & 3.14 & 2.92 & 3.15 & 2.92 & 3.15 & 2.93 & 3.15 & 2.94 \\
& 1.03e-02 & 2.62 & 2.48 & 3.09 & 2.90 & 3.10 & 2.91 & 3.10 & 2.92 & 3.11 & 2.93 \\
\bottomrule
\end{tabular}
\end{center}
\end{small}
\end{table}

\begin{table}\setlength{\tabcolsep}{5pt}
\caption{Convergence rates of the numerical tests of Section \ref{sec:num.res.trigo} with $d=3$ and $r \in \big\{2,\frac52,3\big\}$.}
\label{table:3D}
\begin{small}
\begin{center}
\begin{tabular}{cc|cc|cc|cc|cc|cc}
\toprule
\multicolumn{12}{c}{$r = 2$}\\[2pt]
\hline
\multicolumn{2}{c|}{$s$} & \multicolumn{2}{c|}{$\frac{3}{2}$} & \multicolumn{2}{c|}{$\frac{9}{5}$} & \multicolumn{2}{c|}{2} & \multicolumn{2}{c|}{$\frac52$}  & \multicolumn{2}{c}{3} \\[2pt]
\hline
$k$ & $h$ & $\b u$ & $p$  & $\b u$ & $p$  & $\b u$ & $p$  & $\b u$ & $p$  & $\b u$ & $p$  \\
\hline
\multirow{3}{*}{1} & 1.57e-01 &  1.87 &  1.52 &  1.85 &  1.56 &  1.84 &  1.58 &  1.81 &  1.64 &  1.76 &  1.71   \\
& 7.87e-02 &  1.92 & 2.08 &  1.91 & 2.15 &  1.90 & 2.18 &  1.88 & 2.22 &  1.86 & 2.26 \\
& 3.94e-02 &  1.95 & 2.33 &  1.95 & 2.46 &  1.95 & 2.49 &  1.94 & 2.48 &  1.93 & 2.45 \\
\hline
\multirow{2}{*}{2} & 1.57e-01 &  2.88 &  2.86 &  2.87 & 3.01 &  2.87 & 3.06 &  2.86 & 3.09 &  2.87 & 3.10   \\
& 7.87e-02 &  2.91 &  2.77 &  2.92 & 3.23 &  2.92 & 3.34 &  2.92 & 3.04 &  2.92 &  2.74 \\
\hline
\multirow{2}{*}{3} & 1.57e-01 &  3.88 &  3.88 &  3.87 &  3.90 &  3.86 &  3.91 &  3.85 &  3.94 &  3.83 & 4.00   \\
& 7.87e-02 &  3.92 & 4.24 &  3.92 & 4.27 &  3.92 & 4.28 &  3.91 & 4.29 &  3.91 & 4.29 \\
\toprule
\multicolumn{12}{c}{$r = \frac52$}\\[2pt]
\hline
\multicolumn{2}{c|}{$s$} & \multicolumn{2}{c|}{$\frac{3}{2}$} & \multicolumn{2}{c|}{$\frac{9}{5}$} & \multicolumn{2}{c|}{2} & \multicolumn{2}{c|}{$\frac52$}  & \multicolumn{2}{c}{3} \\[2pt]
\hline
$k$ & $h$ & $\b u$ & $p$  & $\b u$ & $p$  & $\b u$ & $p$  & $\b u$ & $p$  & $\b u$ & $p$  \\
\hline
\multirow{3}{*}{1} & 1.57e-01 & 1.77 & 1.38 & 1.75 & 1.42 & 1.74 & 1.43 & 1.72 & 1.47 & 1.68 & 1.53   \\
& 7.87e-02 & 1.88 & 1.78 & 1.87 & 1.95 & 1.86 & 1.99 & 1.83 & 2.03 & 1.80 & 2.06 \\
& 3.94e-02 & 1.92 & 1.90 & 1.91 & 2.21 & 1.90 & 2.29 & 1.88 & 2.17 & 1.85 & 2.04 \\
\hline
\multirow{2}{*}{2} & 1.57e-01 & 2.46 & 2.61 & 2.46 & 2.71 & 2.46 & 2.73 & 2.46 & 2.70 & 2.46 & 2.65   \\
& 7.87e-02 & 2.57 & 2.31 & 2.57 & 2.89 & 2.57 & 3.05 & 2.57 & 2.64 & 2.57 & 2.32 \\
\hline
\multirow{2}{*}{3} & 1.57e-01 & 3.08 & 3.21 & 3.08 & 3.22 & 3.07 & 3.22 & 3.06 & 3.22 & 3.05 & 3.22   \\
& 7.87e-02 & 3.32 & 3.51 & 3.32 & 3.52 & 3.32 & 3.52 & 3.31 & 3.52 & 3.31 & 3.52 \\
\toprule
\multicolumn{12}{c}{$r = 3$}\\[2pt]
\hline
\multicolumn{2}{c|}{$s$} & \multicolumn{2}{c|}{$\frac{3}{2}$} & \multicolumn{2}{c|}{$\frac{9}{5}$} & \multicolumn{2}{c|}{2} & \multicolumn{2}{c|}{$\frac52$}  & \multicolumn{2}{c}{3} \\[2pt]
\hline
$k$ & $h$ & $\b u$ & $p$  & $\b u$ & $p$  & $\b u$ & $p$  & $\b u$ & $p$  & $\b u$ & $p$  \\
\hline
\multirow{3}{*}{1} & 1.57e-01 & 1.63 & 1.29 & 1.62 & 1.31 & 1.62 & 1.31 & 1.60 & 1.31 & 1.59 & 1.35   \\
& 7.87e-02 & 1.73 & 1.61 & 1.72 & 1.82 & 1.72 & 1.88 & 1.71 & 1.90 & 1.69 & 1.90 \\
& 3.94e-02 & 1.17 & 1.56 & 1.18 & 1.89 & 1.19 & 2.03 & 1.21 & 1.80 & 1.25 & 1.61 \\
\hline
\multirow{2}{*}{3} & 1.57e-01 & 2.16 & 2.42 & 2.16 & 2.45 & 2.16 & 2.45 & 2.16 & 2.43 & 2.16 & 2.39   \\
& 7.87e-02 & 2.30 & 2.21 & 2.30 & 2.57 & 2.30 & 2.64 & 2.30 & 2.43 & 2.30 & 2.20 \\
\hline
\multirow{2}{*}{2} & 1.57e-01 & 2.76 & 2.84 & 2.76 & 2.84 & 2.75 & 2.83 & 2.75 & 2.83 & 2.75 & 2.83   \\
& 7.87e-02 & 2.93 & 2.89 & 2.93 & 2.89 & 2.93 & 2.89 & 2.93 & 2.89 & 2.92 & 2.89 \\
\bottomrule
\end{tabular}
\end{center}
\end{small}
\end{table}

\subsection{Lid-driven cavity flow}\label{sec:cavity}

We next consider the lid-driven cavity flow, a well-known problem in fluid mechanics.
While this problem has been solved with a large variety of numerical methods for Newtonian fluids with standard convection law, some of the combinations of general viscosity and convection laws considered here appear to be entirely new.
The domain is the unit square $\Omega=(0,1)^2$, and we enforce a unit tangential velocity $\boldsymbol{u}=(1,0)$ on the top edge (of equation $x_2=1$) and wall boundary conditions on the other edges.
This boundary condition is incompatible with the formulation \eqref{eq:ns.weak}, even generalized to non-homogeneous boundary conditions, since $\b u \notin\b U$.
However, this is a very classical test that demonstrates the performance of the method in situations closer to real-life problems.
We consider the diffusion law corresponding to the $(\mu,1,\sob,\sob)$-Carreau--Yasuda model \eqref{eq:Carreau--Yasuda} with a moderate Reynolds number $\cst{Re} \coloneq \frac{2}{\mu} = 1000$, and the convection law given by the $(1,s)$-Laplace formula \eqref{eq:Laplace}.
In order to compare the flow behavior with respect to both $r$ and $s$, with $(r,s)$ in $\big\{\frac{3}{2},2,3\big\}\times \{2\}$ and $\big\{\frac{5}{2}\big\}\times\big\{\frac{3}{2},2,\frac{9}{2}\big\}$, we solve the discrete problem on a Cartesian mesh of size $32\times 32$ for $k=3$, corresponding to $15872$ degrees of freedom.
In Figure \ref{fig:cavity.graph} we display the velocity magnitude, while in Figure \ref{fig:cavity} we plot the horizontal component $u_1$ of the velocity along the vertical centreline $x_1=\frac12$ (resp., vertical component $u_2$ along the horizontal centreline $x_2=\frac12$).
When $r=s=2$, reference solutions from the literature \cite{Ghia.Ghia.ea:82,Erturk.Corke.ea:05} are also plotted for the sake of comparison.
We observe significant differences in the behavior of the flow according to the viscous exponent $r$ and the convective exponent $s$, coherent with the expected physical behavior.
In particular, the viscous effects increase with $r$, as reflected by the size of the central vortex and the inclination of the centrelines. We observed the same phenomenon on the Stokes problem, cf. \cite[Sec. 5.2]{Botti.Castanon-Quiroz.ea:20}.
Moreover, the turbulent effects increase with $s$ as shown by the circular nature of the central vortex and the sharpness of the centrelines.

\begin{figure}
\begin{center}
 	  \begin{minipage}[b]{0.4\columnwidth}
   		\includegraphics[scale=0.44]{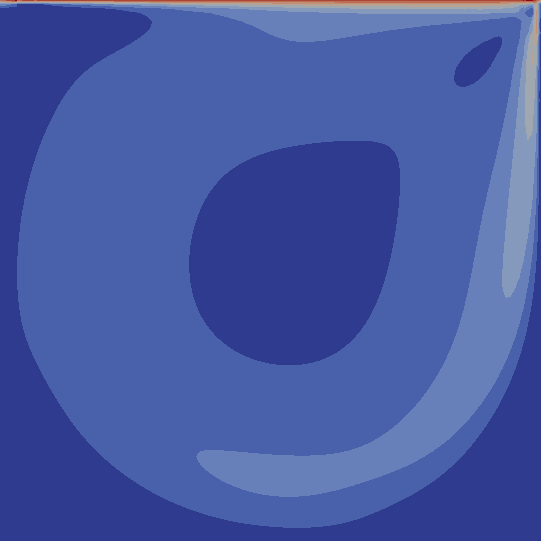}
   		\begin{center}
   		\vspace{-3mm}$r=\frac{3}{2}$,\ \ $s=2$
   		\end{center}
   		\includegraphics[scale=0.44]{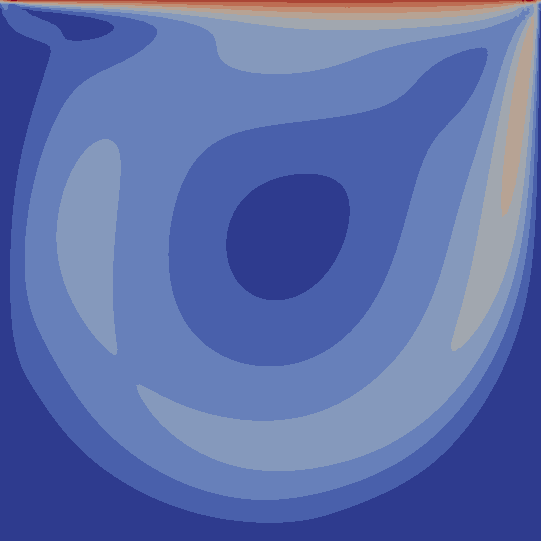}
   		\begin{center}
   		\vspace{-3mm}$r=2$,\ \ $s=2$ \phantom{$\frac{3}{2}$}
   		\end{center}
   		 \includegraphics[scale=0.44]{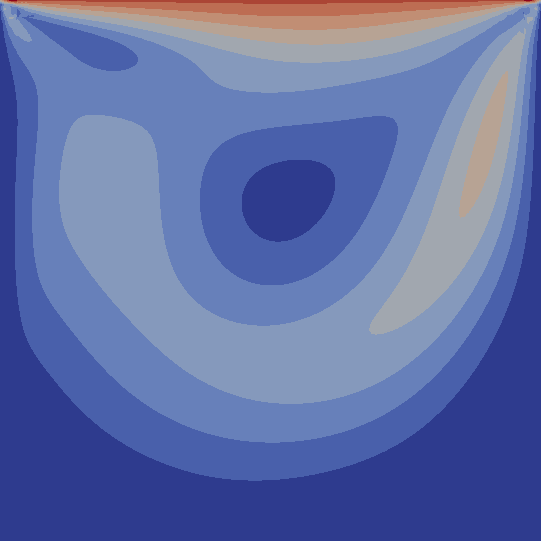}
   		\begin{center}
   		\vspace{-3mm}$r=3$,\ \ $s=2$ \phantom{$\frac{3}{2}$}
   		\end{center}
   	\end{minipage} 
   	\hspace{5mm}
   	 	  \begin{minipage}[b]{0.4\columnwidth}
   		\includegraphics[scale=0.44]{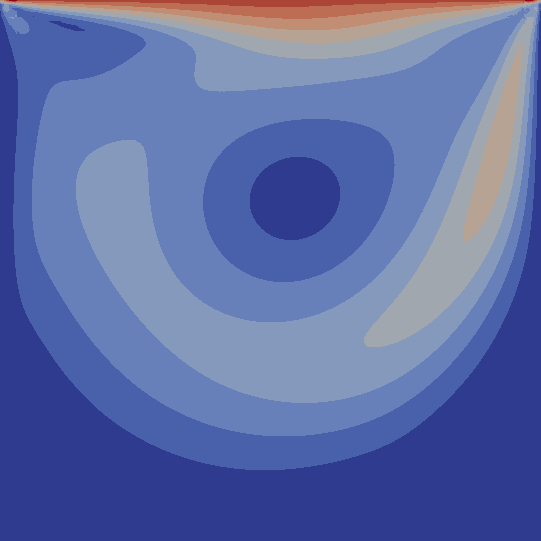}
   		\begin{center}
   		\vspace{-3mm}$r=\frac{5}{2}$,\ \ $s=\frac{3}{2}$
   		\end{center}
   		\includegraphics[scale=0.44]{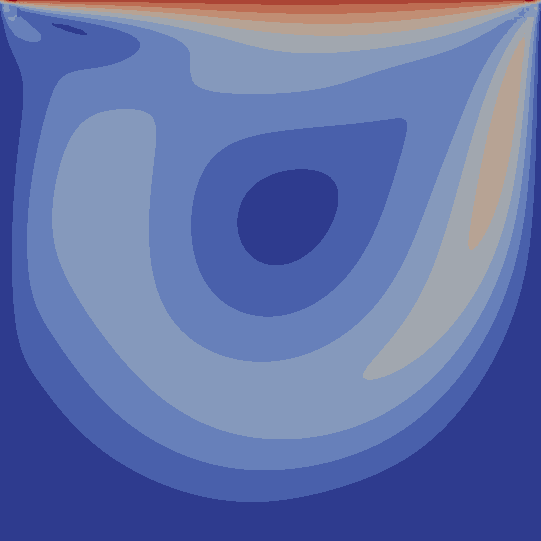}
   		\begin{center}
   		\vspace{-3mm}$r=\frac{5}{2}$,\ \ $s=2$
   		\end{center}
   		 \includegraphics[scale=0.44]{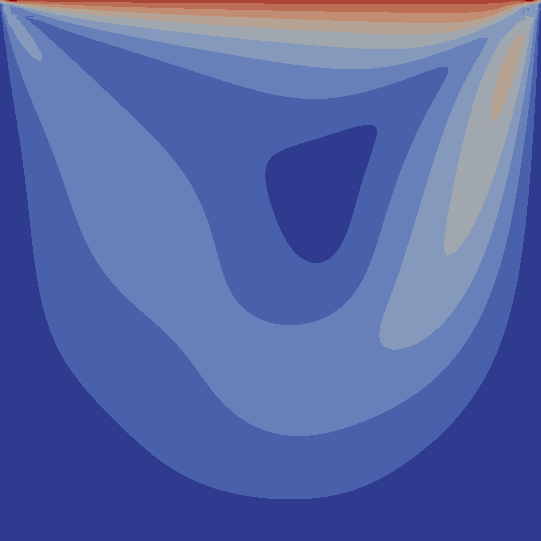}
   		\begin{center}
   		\vspace{-3mm}$r=\frac{5}{2}$,\ \ $s=\frac{9}{2}$
   		\end{center}
   	\end{minipage} 
	\end{center}
  \caption{Numerical results for the test case of Section \ref{sec:cavity}. Velocity magnitude contours ($10$ equispaced values in the range $[0,1]$).  
  \label{fig:cavity.graph}
  }
	\end{figure}

	\begin{figure}
\begin{center}
   		\begin{tikzpicture}[font=\footnotesize, 
          spy using outlines={magnification=4, size=3cm, connect spies, fill=none} 
        ]
        \begin{axis}[height=10cm, width=10cm,
            xmin=-1, xmax=1, ymin=0, ymax=1,
            xlabel={$u_1$}, ylabel={$x_2$},
            legend style = { at={(0,1)}, anchor=north west, draw=none, fill=none }, 
            axis x line=top, ytick pos=bottom,
            legend cell align={left}]
          
          \addplot +[mark=none, thick, blue] table [x=a, y=Points:1] {results/lid_driven_cavity/squaremesh/re1000_r150_s200_k3_32_u1.txt};
          \addplot +[mark=none, thick, black] table [x=a, y=Points:1] {results/lid_driven_cavity/squaremesh/re1000_r200_s200_k3_32_u1.txt};
          \addplot +[mark=none, thick, red] table [x=a, y=Points:1] {results/lid_driven_cavity/squaremesh/re1000_r300_s200_k3_32_u1.txt};
          \addplot +[mark=+, only marks, mark size=1.4, black] table [x=Re1000, y=y] {results/lid_driven_cavity/squaremesh/ghia-ghia-shin-u1.txt};  
          \addplot +[mark=o, only marks, mark size=1.4, black] table [x=Re1000, y=y] {results/lid_driven_cavity/squaremesh/erturk-corke-gokcol-u1.txt}; 
          \legend{
            {$r=\frac{3}{2}$\hspace{2.35cm}$s=2$},
            {$r=2$},
            {$r=3$},
            {Ghia et al.},
            {Erturk et al.}
          }
        \end{axis}
        \begin{axis}[height=10cm, width=10cm,
            xmin=0, xmax=1, ymin=-1, ymax=1,
            xlabel={$x_1$}, ylabel={$u_2$}, 
            axis y line=right, xtick pos=left]
          \addplot +[mark=none, thick, blue] table [x=Points:0, y=b] {results/lid_driven_cavity/squaremesh/re1000_r150_s200_k3_32_u2.txt};  
          \addplot +[mark=none, thick, black] table [x=Points:0, y=b] {results/lid_driven_cavity/squaremesh/re1000_r200_s200_k3_32_u2.txt};   
          \addplot +[mark=none, thick, red] table [x=Points:0, y=b] {results/lid_driven_cavity/squaremesh/re1000_r300_s200_k3_32_u2.txt};  
          \addplot +[mark=+, only marks, mark size=1.4, black] table [x=x, y=Re1000] {results/lid_driven_cavity/squaremesh/ghia-ghia-shin-u2.txt};  
          \addplot +[mark=o, only marks, mark size=1.4, black] table [x=x, y=Re1000] {results/lid_driven_cavity/squaremesh/erturk-corke-gokcol-u2.txt}; 
        \end{axis}
      \end{tikzpicture}
	  ~\medbreak
   		\begin{tikzpicture}[font=\footnotesize, 
          spy using outlines={magnification=4, size=3cm, connect spies, fill=none} 
        ]
        \begin{axis}[height=10cm, width=10cm,
            xmin=-1, xmax=1, ymin=0, ymax=1,
            xlabel={$u_1$}, ylabel={$x_2$},
            legend style = { at={(0,1)}, anchor=north west, draw=none, fill=none }, 
            axis x line=top, ytick pos=bottom,
            legend cell align={left}]
          
          \addplot +[mark=none, thick, blue] table [x=a, y=Points:1] {results/lid_driven_cavity/squaremesh/re1000_r250_s150_k3_32_u1.txt};
          \addplot +[mark=none, thick, black] table [x=a, y=Points:1] {results/lid_driven_cavity/squaremesh/re1000_r250_s200_k3_32_u1.txt};
          \addplot +[mark=none, thick, red] table [x=a, y=Points:1] {results/lid_driven_cavity/squaremesh/re1000_r250_s450_k3_32_u1.txt};

          \legend{%
            {$s=\frac{3}{2}$\hspace{2.35cm}$r=\frac{5}{2}$},
            {$s=2$},
            {$s=\frac{9}{2}$}
          }
        \end{axis}
        \begin{axis}[height=10cm, width=10cm,
            xmin=0, xmax=1, ymin=-1, ymax=1,
            xlabel={$x_1$}, ylabel={$u_2$}, 
            axis y line=right, xtick pos=left]
          \addplot +[mark=none, thick, blue] table [x=Points:0, y=b] {results/lid_driven_cavity/squaremesh/re1000_r250_s150_k3_32_u2.txt};  
          \addplot +[mark=none, thick, black] table [x=Points:0, y=b] {results/lid_driven_cavity/squaremesh/re1000_r250_s200_k3_32_u2.txt};  
          \addplot +[mark=none, thick, red] table [x=Points:0, y=b] {results/lid_driven_cavity/squaremesh/re1000_r250_s450_k3_32_u2.txt};  
        \end{axis}
      \end{tikzpicture}
	\end{center}
	\vspace{-1mm}
  \caption{Numerical results for the test case of Section \ref{sec:cavity}. 
    Horizontal component $u_1$ of the velocity along the vertical centreline $x_1=\frac12$ and vertical component $u_2$ of the velocity along the horizontal centreline $x_2=\frac12$. \emph{Top:} Results with $(r,s) \in \big\{\frac{3}{2},2,3\big\}\times \{2\}$. \emph{Bottom:} Results with $(r,s)\in \big\{\frac{5}{2}\big\}\times\big\{\frac{3}{2},2,\frac{9}{2}\big\}$.\label{fig:cavity}
  }
	\end{figure}
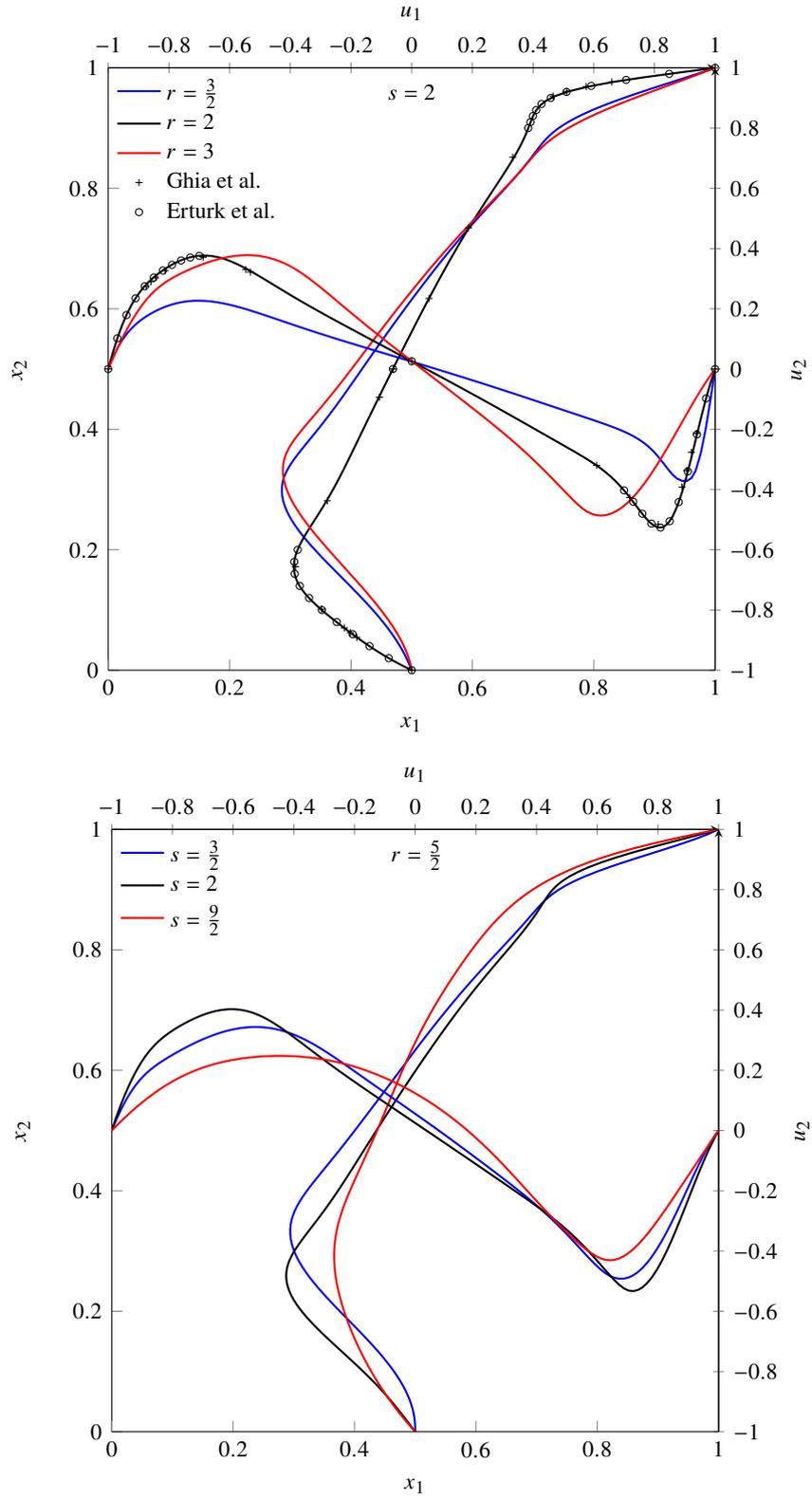


\section{Proofs of the main results}\label{sec:main.results.properties}

In this section we first give the proof of the properties \eqref{eq:ch:consistency} and \eqref{eq:ch:sequential.consistency} of the discrete convective function $\cst{c}_h$, then prove, in this order, Theorems \ref{thm:convergence} and \ref{thm:error.estimate}.

\subsection{Consistency of $\cst{c}_h$}\label{sec:ch:consistency}

\begin{proof}[Proof of \eqref{eq:ch:consistency} (Consistency).]
  Let, for the sake of conciseness, $\bu{\hat w}_h \coloneq \bI{h}{k} \b w$. Using the single-valuedness of $(\b w \cdot \b n_{TF})\convec(\cdot,\b w)$ across any interface $F\in\Fi$ together with the fact that $\b v_F = \b 0$ on any boundary face $F\in\Fb$, we get
  \[
  \sum_{T \in \mathcal T_h} \sum_{F \in \mathcal F_T} \int_F (\b w \cdot\b n_{TF})\convec(\cdot,\b w)\cdot \b v_F = 0.
  \]
    Proceeding as in \eqref{eq:c:weak.formulation} but with an element-by-element integration by parts, and using the previous relation to insert $\b v_F$ into the boundary term, we infer
  \begin{multline}\label{eq:ch:cons:diff:left}
      \int_\Omega \tri{\b w}{\GRAD}{\convec(\cdot,\b w)}{\b v_h} = \frac{1}{\conv}\int_\Omega \tri{\convec(\cdot,\b w)}{\GRAD}{\b w}{\b v_h}+\frac{\conv-2}{\conv}\int_\Omega \frac{\b v_h \cdot \b w}{|\b w|^2}\tri{\convec(\cdot,\b w)}{\GRAD}{\b w}{\b w} \\
      \qquad -\frac{1}{\conv'}\left(\int_\Omega \tri{\convec(\cdot,\b w)}{\brkGRAD}{\b v_h}{\b w} +\sum_{T \in \T_h} \sum_{F \in \F_T} \int_F (\b w \cdot \b n_{TF})\convec(\cdot,\b w) \cdot (\b v_F -\b v_T)\right).
  \end{multline}
  Using the definitions \eqref{eq:proj} of $\PROJ{T}{k}$, \eqref{eq:Gh} of $\dgrad{k}{h}$, and \eqref{eq:G.T} of $\dgrad{k}{T}$ (the latter with $\b\tau = \PROJ{T}{k}\left(\convec(\cdot,\b{\hat w}_T) \otimes \b{\hat w}_{T}\right)$), we get
  \begin{multline}\label{eq:ch:cons:diff:etap}
      \int_\Omega \tri{\convec(\cdot,\b{\hat w}_h)}{\dgrad{k}{h}}{\bu v_h}{\b{\hat w}_h}
      = \sum_{T\in\T_h}\int_{T} \dgrad{k}{T}\bu v_{T} : \PROJ{T}{k}\left(\convec(\cdot,\b{\hat w}_T) \otimes \b{\hat w}_{T}\right) \\
      = \sum_{T\in\T_h}\int_{T} \GRAD\b v_T : \cancel{\PROJ{T}{k}}\left(\convec(\cdot,\b{\hat w}_T) \otimes \b{\hat w}_{T}\right)
      +\sum_{T \in \T_h} \sum_{F \in \F_T} \int_F  \left(\PROJ{T}{k}\left(\convec(\cdot,\b{\hat w}_T) \otimes \b{\hat w}_{T}\right)\b n_{TF}\right)\cdot (\b v_F -\b v_T),
  \end{multline}
  where the removal of $\PROJ{T}{k}$ is justified by its definition after observing that $\GRAD\b v_T\in\Poly^{k-1}(T)^{d\times d}\subset\Poly^k(T)^{d\times d}$.
  Plugging \eqref{eq:ch:cons:diff:etap} into the definition \eqref{eq:ch} of $\cst{c}_h$, we obtain
  \begin{multline}\label{eq:ch:cons:diff:right}
    \cst{c}_h(\bu{\hat w}_h,\bu v_h)
    = \frac{1}{\conv}\int_\Omega \tri{\convec(\cdot,\b{\hat w}_h)}{\dgrad{k}{h}}{\bu{\hat w}_{h}}{\b v_h}
    +\frac{\conv-2}{\conv}\int_\Omega \frac{\b v_h \cdot \b{\hat w}_h}{|\b w_h|^{2}}\tri{\convec(\cdot,\b{\hat w}_h)}{\dgrad{k}{h}}{\bu{\hat w}_{h}}{\b{\hat w}_h} \\
    - \frac{1}{\conv'}\left(\int_\Omega \tri{\convec(\cdot,\b{\hat w}_h)}{\brkGRAD}{\b v_h}{\b{\hat w}_h} +\sum_{T \in \T_h} \sum_{F \in \F_T} \int_F \left(\PROJ{T}{k}\left(\convec(\cdot,\b{\hat w}_T) \otimes \b{\hat w}_{T}\right)\b n_{TF}\right)\cdot (\b v_F -\b v_T)\right).
  \end{multline}
  Subtracting \eqref{eq:ch:cons:diff:right} from \eqref{eq:ch:cons:diff:left}, then adding and subtracting to the right-hand side of the resulting expression the quantity
  \begin{multline*}
    \frac{1}{\conv}\int_\Omega \tri{\convec(\cdot,\b w)}{\dgrad{k}{h}}{\bu{\hat w}_h}{\b v_h}
    +\frac{\conv-2}{\conv}\int_\Omega \frac{\b v_h \cdot \b w}{|\b w|^{2}}\tri{\convec(\cdot,\b w)}{\dgrad{k}{h}}{\bu{\hat w}_{h}}{\b w}
    +\frac{1}{\conv'}\bigg[\int_\Omega \tri{\convec(\cdot,\b w)}{\brkGRAD}{\b v_h}{\b{\hat w}_h} \\
      +\sum_{T \in \T_h} \sum_{F \in \F_T}\left(\int_F  \left(\left(\convec(\cdot,\b{\hat w}_T) \otimes \b{\hat w}_{T}\right)\b n_{TF}\right)\cdot (\b v_F -\b v_T)+\int_F (\b{\hat w}_T \cdot \b n_{TF})\convec(\cdot,\b w) \cdot (\b v_F -\b v_T)\right)
      \bigg],
  \end{multline*}
  we obtain
  \begin{equation}\label{eq:ch:cons:diff}
    \int_\Omega \tri{\b w}{\GRAD}{\convec(\cdot,\b w)}{\b v_h} - \cst{c}_h(\bI{h}{k} \b w,\bu v_h)  \\
    = \frac{1}{\conv}\left(\mathcal T_1 + \mathcal T_2\right)
    +\frac{\conv-2}{\conv}\left(\mathcal T_3 + \mathcal T_4\right)
    -\frac{1}{\conv'}\left(\mathcal T_5 + \mathcal T_6 + \mathcal T_7 + \mathcal T_8 + \mathcal T_9\right),
  \end{equation}
  with
  \begin{align*}
    \mathcal T_1 &\coloneq
    \int_\Omega \tri{\convec(\cdot,\b w)}{(\GRAD-\dgrad{k}{h}\bI{h}{k})}{\b w}{\b v_h},
    \\
    \mathcal T_2 &\coloneq
    \int_\Omega \btri{(\convec(\cdot,\b w)-\convec(\cdot,\b{\hat w}_h))}{\dgrad{k}{h}}{\bu{\hat w}_{h}}{\b v_h},
    \\
    \mathcal T_3 &\coloneq
    \int_\Omega \frac{\b v_h \cdot \b w}{|\b w|^{2}}\btri{\convec(\cdot,\b w)}{(\GRAD-\dgrad{k}{h}\bI{h}{k})}{\b w}{\b w},
    \\
    \mathcal T_4 &\coloneq
    \int_\Omega\left( \frac{\b v_h \cdot \b w}{|\b w|^{2}}\tri{\convec(\cdot,\b w)}{\dgrad{k}{h}}{\bu{\hat w}_{h}}{\b w}-\frac{\b v_h \cdot \b{\hat w}_h}{|\b{\hat w}_h|^{2}}\tri{\convec(\cdot,\b{\hat w}_h)}{\dgrad{k}{h}}{\bu{\hat w}_{h}}{\b{\hat w}_h}\right),
    \\
    \mathcal T_5 &\coloneq
    \int_\Omega \tri{\convec(\cdot,\b w)}{\brkGRAD}{\b v_h}{(\b w-\b{\hat w}_h)},
    \\
    \mathcal T_6 &\coloneq
    \int_\Omega \btri{(\convec(\cdot,\b w)-\convec(\cdot,\b{\hat w}_h))}{\brkGRAD}{\b v_h}{\b{\hat w}_h},
    \\
    \mathcal T_7 &\coloneq
    \sum_{T \in \T_h} \sum_{F \in \F_T} \int_F ((\b w-\b{\hat w}_T) \cdot \b n_{TF})\convec(\cdot,\b w) \cdot (\b v_F -\b v_T),
    \\
    \mathcal T_8 &\coloneq
    \sum_{T \in \T_h} \sum_{F \in \F_T} \int_F (\b{\hat w}_T \cdot \b n_{TF})~(\convec(\cdot,\b w)-\convec(\cdot,\b{\hat w}_T)) \cdot (\b v_F -\b v_T),
    \\
    \mathcal T_9 &\coloneq
    \sum_{T \in \T_h} \sum_{F \in \F_T} \int_F  \left(\left(\convec(\cdot,\b{\hat w}_T) \otimes \b{\hat w}_{T}-\PROJ{T}{k}(\convec(\cdot,\b{\hat w}_T) \otimes \b{\hat w}_{T})\right)\b n_{TF}\right)\cdot (\b v_F -\b v_T).
  \end{align*}
  We proceed to estimate these terms.
  For $\mathcal T_1$ and $\mathcal T_3$, using the $(1;\conv'\sob',\sob,\conv r')$-H\"{o}lder inequality \eqref{eq:holder} together with the H\"older continuity \eqref{eq:ass:conv:holder.cont} of $\convec$, we get
  \[
  \begin{aligned}
    |\mathcal T_1|+|\mathcal T_3| &\lesssim \chi_\cst{hc}\|\b w\|_{L^{\conv r'}(\Omega)^d}^{s-1}\|\GRAD\b w  - \dgrad{k}{h}(\bI{h}{k}\b w)\|_{L^\sob(\Omega)^{d \times d}} \| \b v_h \|_{L^{\conv r'}(\Omega)^d}\\
    &\lesssim h^{k+1}\chi_\cst{hc}|\b w|_{W^{1,\sob}(\Omega)^d}^{s-1}|\b w|_{W^{k+2,\sob}(\T_h)^d}\| \bu v_h \|_{\strain,\sob,h},
  \end{aligned}
  \]
  where we concluded with the consistency \eqref{eq:Gh:consistency} of the gradient reconstruction together with the continuous \eqref{eq:sob:emb} and discrete \eqref{eq:discrete.sob.emb} Sobolev embeddings (valid since $sr' \le r^*$ by \eqref{eq:cons.ch:s}) and the norm equivalence \eqref{eq:norm.equiv}.

  Moving to $\mathcal T_2$ and $\mathcal T_4$, using a Cauchy--Schwarz inequality together with the H\"older continuity \eqref{eq:ass:conv:holder.cont} of $\convec$ on $\mathcal T_2$ and the same reasoning as in \eqref{eq:c:holder.cont:T4} on $\mathcal T_4$ yields
  \[
  \begin{aligned}
    |\mathcal T_2|+|\mathcal T_4| &\le \chi_\cst{hc}\int_\Omega \left(|\b w|^\conv+|\b{\hat w}_h|^\conv\right)^\frac{\conv-\convs}{\conv}|\b w - \b{\hat w}_h|^{\convs-1}|\dgrad{k}{h}\bu{\hat w}_h|_{d\times d}|\b v_h|\\
    &\le \chi_\cst{hc}\left(\|\b w\|_{L^{\conv r'}(\Omega)^d}^\conv+\|\b{\hat w}_h\|_{L^{\conv r'}(\Omega)^d}^\conv\right)^\frac{\conv-\convs}{\conv}\|\b w-\b{\hat w}_h\|_{L^{\conv r'}(\Omega)^d}^{\convs-1}\|\dgrad{k}{h}\bu{\hat w}_h\|_{L^\sob(\Omega)^{d\times d}}\|\b v_h \|_{L^{\conv r'}(\Omega)^d}\\
    &\lesssim  h^{(k+1)(\convs-1)}\chi_\cst{hc}|\b w|_{W^{1,\sob}(\Omega)^d}^{\conv+1-\convs}|\b w|_{W^{k+1,s\sob'}(\T_h)^d}^{\convs-1}\|\bu v_h \|_{\strain,\sob,h},
  \end{aligned}
  \]
  where we have used the $(1;\frac{\conv r'}{\conv-\convs},\frac{\conv r'}{\convs-1},\sob,\conv r')$-H\"{o}lder inequality \eqref{eq:holder} in the second line, while the conclusion follows from the continuous \eqref{eq:sob:emb} and discrete \eqref{eq:discrete.sob.emb} Sobolev embeddings (again valid since $sr' \le r^*$ by \eqref{eq:cons.ch:s}) along with the boundedness \eqref{eq:Gh:boundedness} of $\dgrad{k}{h}$ and \eqref{eq:I:boundedness} of $\bI{h}{k}$, and the $(k+1,sr',0)$-approximation properties \eqref{eq:proj:app:T} of $\PROJ{h}{k}$.
 
  With a similar reasoning as for $\mathcal T_2$, we get the following bounds for $\mathcal T_5$ and $\mathcal T_6$:
  \begin{align*}
    |\mathcal T_5| &\lesssim h^{k+1}\chi_\cst{hc}|\b w|_{W^{1,\sob}(\Omega)^d}^{\conv-1}|\b w|_{W^{k+1,s\sob'}(\T_h)^d}\|\bu v_h \|_{\strain,\sob,h},
    \\
    |\mathcal T_6| &\lesssim h^{(k+1)(\convs-1)}\chi_\cst{hc}|\b w|_{W^{1,\sob}(\Omega)^d}^{\conv+1-\convs}|\b w|_{W^{k+1,s\sob'}(\T_h)^d}^{\convs-1}\|\bu v_h \|_{\strain,\sob,h}.
  \end{align*}
  Moving to $\mathcal T_8$, the H\"older continuity \eqref{eq:ass:conv:holder.cont} of $\convec$, the $(1;\conv r',\frac{\conv r'}{\conv-\convs},\frac{\conv r'}{\convs-1},\sob)$-H\"{o}lder inequality \eqref{eq:holder}, and the bound $h_F \lesssim h_T$ yield
  \[
  \begin{aligned}
    |\mathcal T_8| &\le  \chi_\cst{hc}\left(\sum_{T \in \mathcal T_h}h_T\| \b{\hat w}_T \|_{L^{\conv r'} (\partial T)^d}^{\conv r'} \right)^\frac{1}{\conv r'}  \left(\sum_{T \in \mathcal T_h} h_T\left(\| \b w \|_{L^{\conv r'}(\partial T)^d}^{\conv r'}+\| \b{\hat w}_T \|_{L^{\conv r'}(\partial T)^d}^{\conv r'}\right) \right)^\frac{\conv-\convs}{\conv r'}\\
    &\qquad \times \left(\sum_{T \in \mathcal T_h}h_T\| \b w-\b{\hat w}_T \|_{L^{\conv r'} (\partial T)^d}^{\conv r'} \right)^\frac{\convs-1}{\conv r'}\left(\sum_{T \in \mathcal T_h}\sum_{F \in \mathcal F_T} h_F^{1-\sob}\| \b v_F-\b v_T \|_{L^{\sob}(F)^d}^\sob \right)^\frac{1}{\sob}  \\
    &\lesssim  h^{(k+1)(\convs-1)}\chi_\cst{hc}\left(|\b w|_{W^{1,s\sob'}(\Omega)^d}^\conv+\|\b w\|_{L^{\conv r'}(\Omega)^d}^\conv+\|\b{\hat w}_h\|_{L^{\conv r'}(\Omega)^d}^\conv\right)^\frac{\conv+1-\convs}{\conv}|\b w|_{W^{k+1,\conv r'}(\T_h)^d}^{\convs-1}\| \bu v_h \|_{\strain,\sob,h} \\
    &\lesssim h^{(k+1)(\convs-1)}\chi_\cst{hc}\left(|\b w|_{W^{1,s\sob'}(\Omega)^d}^s+|\b w|_{W^{1,\sob}(\Omega)^d}^s\right)^\frac{\conv+1-\convs}{s}|\b w|_{W^{k+1,s\sob'}(\T_h)^d}^{\convs-1}\|\bu v_h \|_{\strain,\sob,h},
  \end{aligned}
  \]
  where, to pass to the second line, we have used the discrete trace inequality  \cite[Eq. (1.55)]{Di-Pietro.Droniou:20}, the continuous trace inequality \cite[Eq. (1.51)]{Di-Pietro.Droniou:20} together with the bound $h_T \lesssim 1$, the $(k+1,sr',0)$-trace approximation properties \eqref{eq:proj:app:F} of $\PROJ{T}{k}$, and the definition \eqref{eq:norm.epsilon.sob} of $\|{\cdot}\|_{\strain,\sob,h}$, while the conclusion is obtained using the Sobolev embedding \eqref{eq:sob:emb} (again valid since $sr' \le r^*$ by \eqref{eq:cons.ch:s}).
  
  With a similar reasoning, we get the following bound for $\mathcal T_7$:
  \[
  \begin{aligned}
    |\mathcal T_7| &\lesssim h^{k+1}\chi_\cst{hc}\left(|\b w|_{W^{1,s\sob'}(\Omega)^d}^s+|\b w|_{W^{1,\sob}(\Omega)^d}^s\right)^\frac{\conv-1}{s}|\b w|_{W^{k+1,s\sob'}(\T_h)^d}\|\bu v_h \|_{\strain,\sob,h}.
  \end{aligned}
  \]
  
  Moving to $\mathcal T_9$, the H\"{o}lder inequality together with the bound $h_F \lesssim h_T$ and the definition \eqref{eq:norm.epsilon.sob} of $\|{\cdot}\|_{\strain,\sob,h}$ yield
  \[
  \begin{aligned}
    |\mathcal T_9| 
    &\lesssim  \left(\sum_{T \in \mathcal T_h}h_T\left\| \convec(\cdot,\b{\hat w}_T) \otimes \b{\hat w}_{T}-\PROJ{T}{k}\left(\convec(\cdot,\b{\hat w}_T) \otimes \b{\hat w}_{T}\right) \right\|_{L^{r'} (\partial T)^{d\times d}}^{r'} \right)^\frac{1}{r'}  \| \bu v_h \|_{\strain,\sob,h}  \\
    &\lesssim h^{k+1} \chi_\cst{hc}\left|\b{\hat w}_{T} \right|_{W^{k+1,\conv r'} (\T_h)^d}^s\| \bu v_h \|_{\strain,\sob,h} \lesssim h^{k+1} \chi_\cst{hc}\left|\b w \right|_{W^{k+1,s\sob'}(\T_h)^d}^s\| \bu v_h \|_{\strain,\sob,h},
  \end{aligned}
  \]
  where we passed to the second line using the $(k+1,r',0)$-trace approximation properties \eqref{eq:proj:app:F} of $\PROJ{T}{k}$ together with a triangle inequality and the H\"older continuity \eqref{eq:ass:conv:holder.cont} of $\convec$ (with $(\b v,\b w) = (\b 0,\b{\hat w}_{T})$), while the conclusion follows using a triangle inequality and the $(k+1,sr',k+1)$-approximation properties \eqref{eq:proj:app:T} of $\PROJ{T}{k}$ to write $\left|\b{\hat w}_{T} \right|_{W^{k+1,\conv r'} (\T_h)^d} \le \left|\b{\hat w}_{T} -\b w\right|_{W^{k+1,\conv r'} (\T_h)^d}+\left|\b w \right|_{W^{k+1,s\sob'}(\T_h)^d} \lesssim \left|\b w \right|_{W^{k+1,s\sob'}(\T_h)^d}$.

  Plugging the above bounds for $\mathcal T_1,\ldots,\mathcal T_9$ into \eqref{eq:ch:cons:diff} and passing to the supremum, \eqref{eq:ch:consistency} follows.
\end{proof}

\begin{proof}[Proof of \eqref{eq:ch:sequential.consistency} (Sequential consistency).]
  Let $\b\phi \in C_\cst{c}^\infty(\Omega)^d$ and set $\bu{\hat\phi}_h \coloneq \bI{h}{k} \b\phi$.
  Writing the definition \eqref{eq:ch} of $\cst{c}_h$ with $(\bu w_h,\bu v_h)$ replaced by $(\bu v_h,\bu{\hat\phi}_h)$, we have
  \begin{equation}\label{eq:ch:sequential.consistency:0}
    \begin{aligned}
      \cst{c}_h(\bu v_h,\bu{\hat\phi}_h) &\coloneq \frac{1}{\conv}\underbrace{\int_\Omega \tri{\convec(\cdot,\b v_h)}{\dgrad{k}{h}}{\bu v_h}{\b{\hat\phi}_h}}_{\mathcal T_1}+\frac{\conv-2}{\conv}\underbrace{\int_\Omega \frac{\b{\hat\phi}_h \cdot \b v_h}{|\b v_h|^{2}}\tri{\convec(\cdot,\b v_h)}{\dgrad{k}{h}}{\bu v_h}{\b v_h}}_{\mathcal T_2} \\
      & \quad - \frac{1}{\conv'}\underbrace{\int_\Omega \tri{\convec(\cdot,\b v_h)}{\dgrad{k}{h}}{\bu{\hat\phi}_h}{\b v_h}}_{\mathcal T_3}.
    \end{aligned}
  \end{equation}
Inserting $\pm |\b v_h|^{-2}(\b v_h \otimes \b v_h)\convec(\cdot,\b v)$, then using a triangle inequality, we get
\begin{equation}\label{eq:ch:sequential.consistency:1}
\begin{aligned}
      &\left\||\b v_h|^{-2}(\b v_h \otimes \b v_h)\convec(\cdot,\b v_h)-|\b v|^{-2}(\b v \otimes \b v)\convec(\cdot,\b v)\right\|_{L^{s'r'}(\Omega)^d} \\
     &\qquad \le \|\convec(\cdot,\b v_h)-\convec(\cdot,\b v)\|_{L^{s'r'}(\Omega)^d}+\left\||\convec(\cdot,\b v)|\left(|\b v_h|^{-2}\b v_h \otimes \b v_h-|\b v|^{-2}\b v \otimes \b v\right)\right\|_{L^{s'r'}(\Omega)^{d\times d}} \\
     &\qquad \le \chi_\cst{hc} \left(\|\b v_h\|_{L^{sr'}(\Omega)^d}^\conv+\|\b v\|_{L^{sr'}(\Omega)^d}^\conv\right)^\frac{\conv-\convs}{\conv}\|\b v_h-\b v\|_{L^{sr'}(\Omega)^d}^{\convs-1},
\end{aligned}
\end{equation} 
where we have concluded proceeding as in \eqref{eq:c:holder.cont:1} and using the H\"older continuity \eqref{eq:ass:conv:holder.cont} of $\convec$ and the $(s'r';\frac{sr'}{\conv-\convs},\frac{sr'}{\convs-1})$-H\"older inequality \eqref{eq:holder}.
Thus, recalling that $\b v_h \CV{h}{0} \b v$ strongly in $L^{sr'}(\Omega)^d$ (since $sr' < r^*$ by \eqref{eq:cons.ch:s:strict}), which implies that $\|\b v_h\|_{L^{sr'}(\Omega)^d}^\conv+\|\b v\|_{L^{sr'}(\Omega)^d}^\conv$ is bounded uniformly in $h$, we infer from \eqref{eq:ch:sequential.consistency:1} that,
\begin{align}
  |\b v_h|^{-2}(\b v_h \otimes \b v_h)\convec(\cdot,\b v_h) &\CV{h}{0} |\b v|^{-2}(\b v \otimes \b v)\convec(\cdot,\b v) \text{ strongly in } L^{s'r'}(\Omega)^d,\label{eq:ch:sequential.consistency:chi1}\\
    \convec(\cdot,\b v_h) &\CV{h}{0} \convec(\cdot,\b v) \text{ strongly in } L^{s'r'}(\Omega)^d.\label{eq:ch:sequential.consistency:chi2}
\end{align}
Hence, observing that $\frac{1}{sr'}+\frac{1}{r}+\frac{1}{s'r'} = 1$, the strong convergence \eqref{eq:proj.seq.cons} of $\b{\hat\phi}_h$ in $L^{sr'}(\Omega)^d$ together with the fact that $\dgrad{k}{h}\bu v_h \CV{h}{0} \GRAD\b v$ weakly in $L^{\sob}(\Omega)^{d\times d}$ by assumption, along with \eqref{eq:ch:sequential.consistency:chi1} for $\mathcal T_1$ and \eqref{eq:ch:sequential.consistency:chi2} for $\mathcal T_2$ yield
    \begin{equation}\label{eq:ch:sequential.consistency:T12}
      \mathcal T_2 \CV{h}{0} \int_\Omega \frac{\b\phi \cdot \b v}{|\b v|^2}\tri{\convec(\cdot,\b v)}{\GRAD}{\b v}{\b v}\ \text{ and }\ \mathcal T_1 \CV{h}{0} \int_\Omega \tri{\convec(\cdot,\b v)}{\GRAD}{\b v}{\b\phi}.
    \end{equation}
    Moving to $\mathcal T_3$, the fact that $\b v_h \CV{h}{0} \b v$ strongly in $L^{sr'}(\Omega)^d$ (since $sr' < r^*$ by \eqref{eq:cons.ch:s:strict}) together with the strong convergence \eqref{eq:GT.seq.cons} of $\dgrad{k}{h}\bu{\hat\phi}_h$ in $L^{r}(\Omega)^{d\times d}$ (notice that the weak convergence would suffice to infer the result in \eqref{eq:convergence:vph.ch.c} below) and \eqref{eq:ch:sequential.consistency:chi2} give
    \begin{equation}\label{eq:ch:sequential.consistency:T3}
      \mathcal T_3 \CV{h}{0} \int_\Omega \tri{\convec(\cdot,\b v)}{\GRAD}{\b\phi}{\b v}.
    \end{equation}
    Hence passing to the limit in \eqref{eq:ch:sequential.consistency:0}, using \eqref{eq:ch:sequential.consistency:T12}--\eqref{eq:ch:sequential.consistency:T3}, and recalling the definition \eqref{eq:weak:c} of $c$, we infer \eqref{eq:ch:sequential.consistency}.
\end{proof}

\subsection{Convergence}\label{sec:convergence}

\begin{proof}[Proof of Theorem \ref{thm:convergence}]\label{proof:thm:convergence}
  \textbf{Step 1.} \textit{Existence of a limit}. Since, for all $h \in \mathcal H$, $(\bu u_h,p_h) \in \bdU{h,0}{k} \times \dP{h}{k}$ solves  \eqref{eq:ns.discrete}, the a priori bounds \eqref{thm:discrete.well-posedness:bounds} implies that the sequences $(\|\bu u_h\|_{\strain,\sob,h})_{h \in \mathcal H}$ (hence also $(\|\bu u_h\|_{1,\sob,h})_{h\in\mathcal H}$ by \eqref{eq:norm.equiv}) and $(\|p_h\|_{L^{\sob'}(\Omega)})_{h \in \mathcal H}$ are bounded uniformly in $h$.
  Thus, invoking the discrete compactness result of \cite[Theorem 9.29]{Di-Pietro.Droniou:20}, we infer the existence of $(\b u,p) \in \b U \times P$ such that, up to a subsequence,
  \begin{itemize}[parsep=0pt,noitemsep]
  \item $\b u_{h} \CV{h}{0} \b u$ strongly in $L^{[1,\sob^*)}(\Omega)^d$;
  \item $\dgrad{k}{h} \bu u_{h} \CV{h}{0} \GRAD \b u$ weakly in $L^\sob(\Omega)^{d\times d}$;
  \item $p_{h} \CV{h}{0} p$ weakly in $L^{\sob'}(\Omega)$.
  \end{itemize}
  \textbf{Step 2.} \textit{Identification of the limit.} 
The discrete mass equation \eqref{eq:ns.discrete:mass} along with the sequential consistency \eqref{eq:bh:second.sequential.consistency} of $\cst{b}_h$ yield, for all $\psi\in C^\infty_c(\Omega)$,
  \[
  0 = \cst{b}_{h}(\bu u_{h},\proj{h}{k}\psi) \CV{h}{0} b(\b u,\psi) = 0.
  \]
  By density of $C^\infty_c(\Omega)$ in $L^{r'}(\Omega)$, this shows that $\b u$ satisfies the mass equation \eqref{eq:ns.weak:mass}.   

Let us show now that $(\b u,p)$ satisfy the momentum equation \eqref{eq:ns.weak:momentum}.  
  Since the sequence $(\|\bu u_h\|_{\strain,\sob,h})_{h \in \mathcal H}$ is bounded uniformly in $h$, \eqref{eq:bound.res:stability.boundedness} along with the fact that $\|\dgrads{k}{h}\bu u_h\|_{L^\sob(\Omega)^{d\times d}}\le \|\dgrad{k}{h}\bu u_h\|_{L^\sob(\Omega)^{d\times d}}$ imply that the sequence $(\|\dgrads{k}{h}\bu u_h\|_{L^\sob(\Omega)^{d\times d}})_{h \in \mathcal H}$ is also bounded uniformly in $h$.
  Combined with the H\"older continuity \eqref{eq:ass:stress:holder.cont} of $\stress$, this result implies that the sequence $(\|\stress(\cdot,\dgrads{k}{h}\bu u_h)\|_{L^{\sob'}(\Omega)^{d\times d}})_{h \in \mathcal H}$ is also bounded uniformly in $h$.
  Therefore, $(\stress(\cdot,\dgrads{k}{h}\bu u_h))_{h \in \mathcal H}$ weakly converges to some $\stress_{\b u} \in L^{\sob'}(\Omega,\Ms{d})$ up to a subsequence.
  Furthermore, using the discrete momentum equation \eqref{eq:ns.discrete:momentum} together with the definition \eqref{eq:ah} of $\cst{a}_h$, for all $\b\phi \in C_\cst{c}^\infty(\Omega)^d$, letting $\bu{\hat\phi}_h \coloneqq \bI{h}{k} \b\phi$, we get
  \begin{equation}\label{eq:convergence:G}
    \int_\Omega \stress(\cdot,\dgrads{k}{h}\bu u_h) : \dgrads{k}{h}\bu{\hat\phi}_h = \int_\Omega \b f \cdot \PROJ{h}{k}\b\phi - \cst{s}_h(\bu u_h,\bu{\hat\phi}_h) -\cst{c}_h(\bu u_h,\bu{\hat\phi}_h)-\cst{b}_h(\bu{\hat\phi}_h,p_h).
  \end{equation}
  So, since $\dgrads{k}{h}\bu{\hat\phi}_h \CV{h}{0}{} \GRADs\b\phi$ strongly in $L^\sob(\Omega)^{d\times d}$ thanks to  \eqref{eq:GT.seq.cons} and $\stress(\cdot,\dgrad{k}{h}\bu u_h) \CV{h}{0}{} \stress_{\b u}$ weakly in $L^{\sob'}(\Omega)^{d\times d}$, passing to the sub-limit equality \eqref{eq:convergence:G} yields
  \begin{equation}\label{eq:convergence:sigma.u}
    \int_\Omega \stress_{\b u} : \GRADs \b\phi = \int_\Omega \b f \cdot \b\phi-c(\b u,\b\phi)-b(\b\phi,p),
  \end{equation}
  where we used \eqref{eq:proj.seq.cons} together with the sequential consistencies \eqref{eq:sh:sequential.consistency} of $\cst{s}_h$, \eqref{eq:ch:sequential.consistency} of $\cst{c}_h$, and \eqref{eq:bh:first.sequential.consistency} of $\cst{b}_h$ in the right-hand side.
  By density, \eqref{eq:convergence:sigma.u} is valid for $\b\phi\in \b U$.
  On the other hand, using the definition \eqref{eq:ah} of $\cst{a}_h$, the fact that $\cst{s}_h(\b u_h,\b u_h) \ge 0$, and \eqref{eq:ns.discrete} together with the non-dissipativity \eqref{eq:ch:non-dissip} of $\cst{c}_h$, we infer
  \begin{equation}\label{eq:convergence:bound:1}
    \begin{aligned}
      \int_\Omega \stress(\cdot,\dgrads{k}{h}\bu u_h) : \dgrads{k}{h}\bu u_h = \cst{a}_h(\b u_h,\b u_h) - \cst{s}_h(\b u_h,\b u_h) \leq \cst{a}_h(\b u_h,\b u_h) = \int_\Omega \b f \cdot \b u_h.
    \end{aligned}
  \end{equation}
  Hence, using the H\"older monotonicity \eqref{eq:ass:stress:strong.mono} of $\stress$ and inequality \eqref{eq:convergence:bound:1} we obtain, for all $\boldsymbol{\Lambda} \in L^r(\Omega,\Ms{d})$,
  \begin{equation}\label{eq:convergence:bound:2}
    \begin{aligned}
      0 &\le \int_\Omega \left(\stress(\cdot,\dgrads{k}{h}\bu u_h)-\stress(\cdot,\boldsymbol{\Lambda})\right) : \left(\dgrads{k}{h}\bu u_h-\boldsymbol{\Lambda}\right) \\
      &\le \int_\Omega \b f \cdot \b u_h - \int_\Omega \stress(\cdot,\dgrads{k}{h}\bu u_h) : \boldsymbol{\Lambda} -\int_\Omega \stress(\cdot,\boldsymbol{\Lambda}) : \left(\dgrads{k}{h}\bu u_h-\boldsymbol{\Lambda}\right) \\
      &\CV{h}{0} \int_\Omega \b f \cdot \b u - \int_\Omega \stress_{\b u} : \boldsymbol{\Lambda} -\int_\Omega \stress(\cdot,\boldsymbol{\Lambda}) : \left(\GRADs\b u-\boldsymbol{\Lambda}\right),
    \end{aligned}
  \end{equation}
  where the limit of the third term is justified by the fact that $\stress(\cdot,\boldsymbol{\Lambda}) \in L^{r'}(\Omega,\Ms{d})$ thanks to the H\"older continuity \eqref{eq:ass:stress:holder.cont} of $\stress$ and the weak convergence of $\dgrads{k}{h}\bu u_h\CV{h}{0}\GRADs\b u$ in $L^r(\Omega)^{d\times d}$ (consequence of the analogous property for $\dgrad{k}{h}\bu u_h$).
  It follows from the classical Minty's trick \cite{Leray.Lions:65,Minty:63} (see \cite[Theorem 4.6]{Di-Pietro.Droniou:17} concerning its application to HHO methods) that $(\b u,p)$ satisfies \eqref{eq:ns.weak:momentum}.
   Indeed, taking $\boldsymbol{\Lambda} = \GRADs\b u \pm t\GRADs\b v$ with $t > 0$ and $\b v \in \b U$ into \eqref{eq:convergence:bound:2}, and using \eqref{eq:convergence:bound:1} with $\b\phi = \b u \pm t\b v$, we get
   \begin{equation}\label{eq:Minty.trick}
   t\int_\Omega \stress(\GRADs\b u \pm t\GRADs\b v,\boldsymbol{\Lambda}) : \GRADs\b v  = t\int_\Omega \b f \cdot \b v +\cancel{c(\b u,\b u)}-tc(\b u,\b v)+\cancel{b(\b u,p)}-tb(\b v,p),
   \end{equation}
   where we have used the fact that $\b u$ satisfies the mass equation \eqref{eq:ns.weak:mass} (proved above) and the non-dissipativity property \eqref{eq:c:zero} of $c$, respectively, in the cancellations.
   Dividing \eqref{eq:Minty.trick} by $t$, letting $t \to 0$, and using a dominated convergence argument made possible by the H\"older continuity \eqref{eq:ass:stress:holder.cont} of $\stress$ gives \eqref{eq:ns.weak:momentum}.

  Hence, $(\b u,p) \in \b U \times P$ is a solution of the weak formulation \eqref{eq:ns.weak}. 
  \\
  \\
  \textbf{Step 3.} \textit{Strong convergence of the velocity gradient and convergence of the boundary residual seminorm.}
  Passing to the upper limit inequality \eqref{eq:convergence:bound:1}, and using \eqref{eq:convergence:sigma.u} with $\b\phi$ replaced by $\b u$ together with the mass equation \eqref{eq:ns.weak:mass} and the non-dissipativity property \eqref{eq:c:zero} of $c$ to cancel the two rightmost terms in the resulting equation, we obtain
  \begin{equation} \label{eq:convergence:bound.Fatou}
    \begin{aligned}
      \limsup\limits_{h \to 0}\displaystyle\int_\Omega \stress(\cdot,\dgrads{k}{h}\bu u_h) : \dgrads{k}{h}\bu u_h \leq \int_\Omega \b f \cdot \b u = \int_\Omega \stress_{ \b u}: \GRADs\b u.
    \end{aligned}
  \end{equation}
  Thus, the H\"older monotonicity \eqref{eq:ass:stress:strong.mono} of $\stress$ together with the $(\sob,\frac{\sob}{\sob+2-\sobs},\frac{\sob+2-\sobs}{2-\sobs})$-H\"older inequality \eqref{eq:holder} yields,
  \begin{equation} \label{eq:convergence:limit.0}
    \begin{aligned}
      &\limsup\limits_{h \to 0}\left(\sigma_\cst{hm}\| \dgrads{k}{h} \bu u_h - \GRADs\b u\|_{L^{\sob}(\Omega)^{d\times d}}^{\sob+2-\sobs}  \left(\delta^\sob+\| \bu u_h \|_{\strain,\sob,h}^\sob+\|\GRADs\b u\|_{L^{\sob}(\Omega)^{d\times d}}^\sob\right)^\frac{\sobs-2}{\sob}\right) \\
      &\quad \lesssim \limsup\limits_{h \to 0}\int_\Omega (\stress(\cdot,\dgrads{k}{h}\bu u_h)-\stress(\cdot, \GRADs \b u)) :  (\dgrads{k}{h}\bu u_h - \GRADs \b u ) \leq 0,
    \end{aligned}
  \end{equation}
  where we concluded by strategically separating the terms in order to use inequality \eqref{eq:convergence:bound.Fatou} along with the weak convergences of $\stress(\cdot,\dgrads{k}{h}\bu u_h)\CV{h}{0}\stress_{ \b u}$ in $L^{r'}(\Omega)^{d\times d}$ and $\dgrads{k}{h}\bu u_h\CV{h}{0}\GRADs\b u$ in $L^r(\Omega)^{d\times d}$.
  Hence, since $(\| \bu u_h \|_{\strain,\sob,h})_{h \in \mathcal H}$ is bounded uniformly in $h$, $\dgrads{k}{h} \bu u_h \CV{h}{0}{} \GRADs\b u$ strongly in $L^\sob(\Omega)^{d\times d}$ up to a subsequence.
  Now, using the H\"older continuity \eqref{eq:ass:stress:holder.cont} of $\stress$ together with the $(\sob';\frac{\sob}{\sob-\sobs},\frac{\sob}{\sobs-1})$-H\"older inequality, we get
  \[
  \begin{aligned}
    &\|\stress(\cdot,\dgrads{k}{h} \bu u_h)-\stress(\cdot,\GRADs\b u)\|_{L^{\sob'}(\Omega)^{d\times d}} \\
    &\quad \lesssim \sigma_\cst{hc} \left(\delta^\sob+\|\dgrads{k}{h} \bu u_h\|_{L^{\sob}(\Omega)^{d\times d}}^\sob+\|\GRADs\b u\|_{L^{\sob}(\Omega)^{d\times d}}^\sob\right)^\frac{\sob-\sobs}{\sob}\|\dgrads{k}{h} \bu u_h-\GRADs\b u\|_{L^{\sob}(\Omega)^{d\times d}}^{\sobs-1} \CV{h}{0} 0. \\
  \end{aligned}
  \]
  Thus, $\stress(\cdot,\dgrads{k}{h} \bu u_h) \CV{h}{0} \stress(\cdot,\GRADs\b u)$ strongly in $L^{\sob'}(\Omega)^{d\times d}$ up to a subsequence.
  In particular, we have
  \begin{equation}\label{eq:convergence:sh.uh.0}
    \sigma_\cst{hm}\left(\delta^r+|\bu u_h|_{\sob,h}^\sob\right)^\frac{\sobs-2}{\sob}|\bu u_h|_{\sob,h}^{\sob+2-\sobs}
    \lesssim \cst{s}_h(\bu u_h,\bu u_h) = \int_\Omega \b f \cdot \b u_h  - \int_\Omega \stress(\cdot,\dgrads{k}{h}\bu u_h):\dgrads{k}{h}\bu u_h \CV{h}{0} 0,
  \end{equation}
  thanks to the H\"older monotonicity \eqref{eq:sh:strong.mono} of $\cst{s}_h$ and the definition \eqref{eq:ah} of $\cst{a}_h$. Hence, $|\bu u_h|_{\sob,h} \CV{h}{0} 0$.
  \\[2pt]
  \textbf{Step 4.} \textit{Strong convergence of the pressure.}
  Set $p_{h,\Omega} \coloneqq \int_\Omega |p_h|^{\sob'-2}p_h$.
  Reasoning as in \cite[Eq. (50)]{Botti.Castanon-Quiroz.ea:20}, we infer the existence of $\b v_{p_h} \in W^{1,\sob}_0(\Omega)^d$ such that
  \begin{equation}\label{eq:convergence:vph}
    \div \b v_{p_h} = |p_h|^{\sob'-2}p_h-p_{h,\Omega}\quad \text{ and } \quad \Vert \b v_{p_h} \Vert_{W^{1,\sob}(\Omega)^d} \lesssim \Vert p_h \Vert_{L^{\sob'}(\Omega)}^{\sob'-1}. 
  \end{equation}
  Furthermore, letting $\bu{\hat v}_{p_h} \coloneq \bI{h}{k} \b v_{p_h}$, and using the boundedness \eqref{eq:I:boundedness} of $\bI{h}{k}$ together with \eqref{eq:norm.equiv}, we get
  \[
  \Vert \bu{\hat v}_{p_h}\Vert_{\strain,\sob,h} \lesssim | \b v_{p_h} |_{W^{1,\sob}(\Omega)^d} \lesssim \Vert p_h \Vert_{L^{\sob'}(\Omega)}^{\sob'-1}.
  \]
  Since $(\|p_h\|_{L^{\sob'}(\Omega)})_{h \in \mathcal H}$ is bounded uniformly in $h$, by \cite[Theorem 9.29]{Di-Pietro.Droniou:20} there exists $\b v_p \in \b U$ such that $\b{\hat v}_{p_{h}} \CV{h}{0} \b v_p$ strongly in $L^{[1,\sob^*)}(\Omega)^d$ and $\dgrad{k}{h}\bu{\hat v}_{p_{h}} \CV{h}{0} \GRAD \b v_p$ weakly in $L^{\sob}(\Omega)^{d\times d}$.
    Therefore, recalling that $\stress(\cdot,\dgrads{k}{h} \bu u_h) \CV{h}{0} \stress(\cdot,\GRADs\b u)$ strongly in $L^{\sob'}(\Omega)^{d\times d}$, we get 
    \begin{equation}\label{eq:convergence:vph.sigma}
      \int_\Omega \stress(\cdot,\dgrads{k}{h} \bu u_{h}):\dgrads{k}{h}\bu{\hat v}_{p_{h}} \CV{h}{0} \int_\Omega  \stress(\cdot,\GRADs\b u):\GRADs\b v_p.
    \end{equation}     
    Using the H\"older continuity \eqref{eq:sh:holder.cont} of $\cst{s}_h$ with $\bu w_h = \bu 0$, we next infer
    \begin{equation}\label{eq:convergence:vph.sh}
      \begin{aligned}
        |\cst{s}_h(\bu u_h,\bu{\hat v}_{p_h})|
        &\lesssim \sigma_\cst{hc}\left( \delta^r+  |\bu u_h|_{\sob,h}^\sob\right)^\frac{\sob-\sobs}{\sob}|\bu u_h|_{\sob,h}^{\sobs-1}|\bu{\hat v}_{p_h}|_{\sob,h} \CV{h}{0} 0,
      \end{aligned}
    \end{equation}
    since $|\bu u_h|_{\sob,h} \CV{h}{0} 0$ by \eqref{eq:convergence:sh.uh.0} and noticing that the sequence $(|\bu{\hat v}_{p_h}|_{r,h})_{h\in\mathcal H}$ is bounded uniformly in $h$ by \eqref{eq:bound.res:stability.boundedness}.
    Thus, recalling the definition \eqref{eq:ah} of $\cst{a}_h$, \eqref{eq:convergence:vph.sigma} and \eqref{eq:convergence:vph.sh} yields
    \begin{equation}\label{eq:convergence:vph.ah.a}
      \cst{a}_h(\bu u_h,\bu{\hat v}_{p_h}) \CV{h}{0} a(\b u,\b v_p).
    \end{equation}
 Furthermore, replacing $\b\phi$ by $\b v_{p_{h}}$ in the reasoning that gives \eqref{eq:ch:sequential.consistency:T12}--\eqref{eq:ch:sequential.consistency:T3} after recalling that $\dgrad{k}{h}\bu{\hat v}_{p_{h}} \CV{h}{0} \GRAD \b v_p$ weakly in $L^{\sob}(\Omega)^{d\times d}$ and that $\b{\hat v}_{p_{h}} \CV{h}{0} \b v_p$ strongly in $L^{sr'}(\Omega)^d$ (since $sr' < r^*$ by \eqref{eq:intervals.s:convergence}), we infer
    \begin{equation}\label{eq:convergence:vph.ch.c}
      \cst{c}_h(\bu u_h,\bu{\hat v}_{p_h}) \CV{h}{0} c(\b u,\b v_p).
    \end{equation}
    Moreover, since $\b f \in L^{\sob'}(\Omega)^d$ and $\b{\hat v}_{p_{h}} \CV{h}{0}{} \b v_p$ strongly in $L^\sob(\Omega)^d$, we get
    \begin{equation}\label{eq:convergence:vph.f}
      \int_\Omega \b f \cdot \b{\hat v}_{p_h} \CV{h}{0}  \int_\Omega \b f \cdot \b v_p.
    \end{equation}
    Hence, using the fact that $p_h$ has zero mean value over $\Omega$ together with the equality in \eqref{eq:convergence:vph}, the definition \eqref{eq:weak:ab} of $b$, the Fortin property \eqref{eq:bh:fortin} of $\cst{b}_h$, and equality \eqref{eq:ns.discrete:momentum} yields
    \begin{equation}\label{eq:convergence:ph.p}
      \begin{aligned}
        &\int_\Omega (|p_h|^{\sob'-2}p_h)p_h = \int_\Omega (\div \b v_{p_h})p_h  = -b(\b v_{p_h},p_h)  = -\cst{b}_h(\bu{\hat v}_{p_h},p_h)  \\
        &\qquad = \cst{a}_h(\bu u_h,\bu{\hat v}_{p_h}) + \cst{c}_h(\bu u_h,\bu{\hat v}_{p_h}) - \int_\Omega \b f \cdot \b{\hat v}_{p_h} \CV{h}{0} a(\b u,\b v_p) + c(\b u,\b v_p) - \int_\Omega \b f \cdot \b v_p\\
        &\qquad = -b(\b v_p,p) = \int_\Omega (\div\b v_p)p,
      \end{aligned}
    \end{equation}
    where we have
    used the discrete momentum equation \eqref{eq:ns.discrete:momentum} and applied the limits \eqref{eq:convergence:vph.ah.a}, \eqref{eq:convergence:vph.ch.c}, and \eqref{eq:convergence:vph.f} in the second line,
    used the continuous momentum equation \eqref{eq:ns.weak:momentum} to pass to the third line,
    and invoked the mass equation \eqref{eq:ns.discrete:mass} to conclude.
    Meanwhile, since $(\||p_h|^{\sob'-2}p_h\|_{L^\sob(\Omega)})_{h \in \mathcal H} = (\|p_h\|_{L^{\sob'}(\Omega)})_{h \in \mathcal H}$ is bounded uniformly in $h$, $|p_{h}|^{\sob'-2}p_{h}$ weakly converges to $p_\sob \in L^\sob(\Omega)$ up to a subsequence. In particular, $p_{{h},\Omega} \CV{h}{0}{} p_{\sob,\Omega} \coloneq \int_\Omega p_\sob$, and we deduce that $\div \b v_{p_{h}}   \CV{h}{0}{}  p_\sob-p_{\sob,\Omega} $ weakly in $L^\sob(\Omega)$ and, by uniqueness of the limit in the distributional sense, that $\div \b v_p = p_\sob-p_{\sob,\Omega}$.
    Therefore, using \eqref{eq:convergence:ph.p} together with the fact that $p$ has zero mean value over $\Omega$, we infer
    \begin{equation}\label{eq:convergence:ph.p:1}
      \begin{aligned}
        \int_\Omega (|p_h|^{\sob'-2}p_h)p_h \CV{h}{0} \int_\Omega p_rp.
      \end{aligned}
    \end{equation}
    Moreover, using the H\"older monotonicity property \eqref{eq:s.power:strong.mono} (with $(n,x,y,\conv) = (1,p_h,p,\sob')$) together with the $(\sob',\frac{\sob'}{\sob'+2-\widetilde{\sob'}},\frac{\sob'+2-\widetilde{\sob'}}{2-\widetilde{\sob'}})$-H\"older inequality, and passing to the limit with \eqref{eq:convergence:ph.p:1}, we obtain
    \begin{equation}\label{eq:convergence:ph.p:2}
      \begin{aligned}
        &\| p_h - p\|_{L^{\sob}(\Omega)}^{r'+2-\widetilde{\sob'}} \left(\| p_h \|_{L^{\sob'}(\Omega)}^{\sob'}+\|p\|_{L^{\sob'}(\Omega)}^{\sob'}\right)^\frac{\widetilde{\sob'}-2}{\sob'}  \lesssim \int_\Omega (|p_h|^{\sob'-2}p_h-|p|^{\sob'-2}p)(p_h-p) \CV{h}{0} 0.
      \end{aligned}
    \end{equation}
    Hence, $p_h \CV{h}{0}{} p$ strongly in $L^{\sob}(\Omega)$ up to a subsequence.
\end{proof}

\subsection{Error estimate}\label{sec:error.estimate}

\begin{proof}[Proof of Theorem \ref{thm:error.estimate}]\label{proof:thm:error.estimate}
  Let $(\bu e_h, \epsilon_h) \coloneqq (\bu u_h - \bu{\hat u}_h,p_h - \hat p_h) \in \bdU{h,0}{k} \times \dP{h}{k}$ where $\bu{\hat u}_h \coloneqq\bI{h}{k}\b u$ and $\hat p_h \coloneqq \proj{h}{k} p$.
  \smallskip\\
  \textbf{Step 1.} \emph{Consistency error.} Let $\mathcal E_h : \bdU{h,0}{k} \to \R$ be the consistency error linear form such that, for all $\bu v_h \in \bdU{h,0}{k}$,
  \begin{equation}\label{eq:Eh}
    \mathcal E_h(\bu v_h) \coloneqq \int_\Omega \b f \cdot \b v_h - \cst{a}_h(\bu{\hat u}_h,\bu v_h)- \cst{c}_h(\bu{\hat u}_h,\bu v_h)-\cst{b}_h(\bu v_h,{\hat p}_h).
  \end{equation}
  Using in \eqref{eq:Eh} the fact that $\b f = -\DIV \stress(\cdot,\GRADs \b u)+\NAV{\b u}{\convec(\cdot,\b u)}+\GRAD p$ almost everywhere in $\Omega$ together with the consistency properties \eqref{eq:ah:consistency} of $\cst{a}_h$,  \eqref{eq:bh:consistency} of $\cst{b}_h$, and \eqref{eq:ch:consistency} of $\cst{c}_h$ (since $\conv \le \frac{\sob^*}{\sob'}$), we obtain
  \begin{equation}\label{eq:error.estimate:step1:eh0}
    \$\coloneq
    \sup\limits_{\bu v_h \in \bdU{h,0}{k},\| \bu v_h \|_{\strain,\sob,h} = 1}\mathcal E_h(\bu v_h)
    \lesssim
    h^{(k+1)(\sob-1)}\min\left(\zeta_h(\b u);1\right)^{2-\sob} \mathcal N_2+h^{k+1} \mathcal N_3.
  \end{equation}
  \\
  \textbf{Step 2.} \emph{Error estimate for the velocity.}  
  Replacing $\cst{a}_h$ by $\cst{a}_h+\cst{c}_h$ in the reasoning of \cite[Eq. (70)]{Botti.Castanon-Quiroz.ea:20} yields,
  \begin{equation}\label{eq:error.estimate:step2:eh0}
    \hspace{-1mm}\begin{aligned}
       \mathcal E_h(\bu e_h)
      &=
      \cst{a}_h(\bu u_h,\bu e_h)-\cst{a}_h(\bu{\hat u}_h,\bu e_h)+\cst{c}_h(\bu u_h,\bu e_h)-\cst{c}_h(\bu{\hat u}_h,\bu e_h) \\
      &\ge \left(
        C_\cst{da}\sigma_\cst{hm}-C_{\cst{dc},\sob}\chi_\cst{hc}\left(\delta^\sob\!+
      \| \bu u_h \|_{\strain,\sob,h}^\sob+\|\bu{\hat u}_h\|_{\strain,\sob,h}^\sob
      \right)^\frac{\conv+1-\sob}{\sob}\right)\\
      &\qquad \times 
      \left(
      \delta^\sob\!+\| \bu u_h \|_{\strain,\sob,h}^\sob+\|\bu{\hat u}_h\|_{\strain,\sob,h}^\sob
      \right)^\frac{\sob-2}{\sob}\| \bu e_h \|_{\strain,\sob,h}^{2}\\
      &\gtrsim \left(C_\cst{da}\sigma_\cst{hm}-C_{\cst{dc},\sob}\chi_\cst{hc}\mathcal N_1^\frac{\conv+1-\sob}{r}\right)\mathcal N_1^\frac{\sob-2}{r}\| \bu e_h \|_{\strain,\sob,h}^{2}\\
            &\gtrsim \sigma_\cst{hm}\mathcal N_1^\frac{r-2}{r}\| \bu e_h \|_{\strain,\sob,h}^{2},
    \end{aligned}
  \end{equation}
where, noting that $\sobs = \sob$ and $\convs = 2$ since $\sob \le 2 \le \conv$, we have used the H\"older monotonicity \eqref{eq:ah:strong.mono} of $\cst{a}_h$ together with the H\"older continuity \eqref{eq:ch:holder.cont} of $\cst{c}_h$ (since $\conv \le \frac{\sob^*}{\sob'}$) in the second line, the a priori bound \eqref{eq:discrete.well-posedness:bounds:uh} on the discrete solution along with the boundedness \eqref{eq:I:boundedness} of the global interpolator and the a priori bound \eqref{eq:well-posedness:bound:u} on the continuous solution in the penultimate line, and the small data assumption \eqref{eq:small.f} to conclude.
   Hence, the fact that $\mathcal E_h(\bu e_h) \le \$\| \bu e_h \|_{\strain,\sob,h}$ together with \eqref{eq:error.estimate:step1:eh0} yields \eqref{eq:error.estimate:velocity}.	
   \medskip\\
   \textbf{Step 3.} \emph{Error estimate for the pressure.}	
Using the H\"older continuity \eqref{eq:ch:holder.cont} of $\cst{c}_h$ together with the same raisoning giving the penultimate line of \eqref{eq:error.estimate:step2:eh0} we infer, for all $\bu v_h \in \bdU{h,0}{k}$,
  \begin{equation}\label{eq:error.estimate:step3:ah}
   \begin{aligned}
     \left|\cst{c}_h(\bu{\hat u}_h,\bu v_h)-\cst{c}_h(\bu u_h,\bu v_h)\right|
  &\lesssim \chi_\cst{hc}\mathcal N_1^\frac{s-1}{r}\| \bu e_h \|_{\strain,\sob,h}\| \bu v_h \|_{\strain,\sob,h}.
  \end{aligned}
  \end{equation}
   Thus, replacing $\cst{a}_h$ with $\cst{a}_h+\cst{c}_h$ in the reasoning of \cite[Eq. (72)]{Botti.Castanon-Quiroz.ea:20}, we obtain
  \begin{equation}\label{eq:error.estimate:step3:epsilonh}
    \begin{aligned}
      \| \epsilon_h \|_{L^{\sob'}(\Omega)} 
      &\lesssim \sup\limits_{\bu v_h \in \bdU{h,0}{k},\| \bu v_h \|_{\strain,\sob,h} = 1} \left(\mathcal E_h(\bu v_h)+\cst{a}_h(\bu{\hat u}_h,\bu v_h)-\cst{a}_h(\bu u_h,\bu v_h)+\cst{c}_h(\bu{\hat u}_h,\bu v_h)-\cst{c}_h(\bu u_h,\bu v_h)\right)\\
      &\lesssim \$+\sigma_\cst{hc}\| \bu e_h \|_{\strain,\sob,h}^{\sob-1}+\chi_\cst{hc}\mathcal N_1^\frac{s-1}{r}\| \bu e_h \|_{\strain,\sob,h}, \\
    \end{aligned}
  \end{equation}
  where we have used the H\"older continuity \eqref{eq:ah:holder.cont} of $\cst{a}_h$ together with \eqref{eq:error.estimate:step3:ah} to conclude. Finally, the bounds \eqref{eq:error.estimate:step1:eh0} and \eqref{eq:error.estimate:velocity} (proved in Step 2) give \eqref{eq:error.estimate:pressure}.
\end{proof}


\section*{Acknowledgements}

Daniele Di Pietro acknowledges the partial support of \emph{Agence Nationale de la Recherche} through grant NEMESIS (ANR-20-MRS2-0004).


\raggedright
\printbibliography

\end{document}